\documentclass [11pt,a4paper]{article}
\usepackage[T1]{fontenc}
\usepackage[ansinew]{inputenc}
\usepackage{amsthm,amsmath,amssymb}
\usepackage{graphicx}
\usepackage{cite}
\usepackage{color}

\textwidth=16cm \textheight=24cm \topmargin=-1cm \oddsidemargin=0cm
\flushbottom

\newcommand{\Optime}{\tau_{*}}
\newcommand{\dn}{\partial_{\bf n}}

\newcommand{\N}{\mathbb{N}}

\newcommand{\cs}{{\cal S}}
\newcommand{\s}{\sigma}
\newcommand{\uad}{{\cal U}_{\rm ad}}
\newcommand{\iTT}{\int_0^T}
\newcommand{\ioma}{\int_\Omega}
\def\vp{\phi}
\newcommand{\dx}{\,{\rm d}x}
\newcommand{\dt}{\,{\rm d}t}
\newcommand{\oma}{\Omega}
\newcommand{\ur}{{\cal U}_R}
\newcommand{\bu}{{u}_*}
\newcommand{\bmu}{{\mu}_*}
\newcommand{\bs}{{\sigma}_*}
\newcommand{\bphi}{{\phi}_*}

\newtheorem{theorem}{\textbf{Theorem}}[section]
\newtheorem{lemma}{\textbf{Lemma}}[section]
\newtheorem{proposition}{\textbf{Proposition}}[section]
\newtheorem{corollary}{\textbf{Corollary}}[section]
\newtheorem{remark}{\textbf{Remark}}[section]
\newtheorem{definition}{\textbf{Definition}}[section]

\allowdisplaybreaks[4]

\def\bt{\begin{theorem}}
\def\et{\end{theorem}}
\def\bl{\begin{lemma}}
\def\el{\end{lemma}}
\def\br{\begin{remark}}
\def\er{\end{remark}}
\def\bp{\begin{proposition}}
\def\ep{\end{proposition}}
\def\bc{\begin{corollary}}
\def\ec{\end{corollary}}
\def\bd{\begin{definition}}
\def\ed{\end{definition}}

%%%%%%%%%%%%%%%%%%%%%%%%%%%%%%%%%%%%%%%%%%%%%%%%%%%%%%%%%%%%%%%%%%%%%%%%%%%%%%%%%%%%%%%%%%%%%%%%%%%%%%%%%%%%%%%%%%%%%%%%%%

\begin{document}

\title{Long-time Dynamics and Optimal Control \\
of a Diffuse Interface Model for Tumor Growth}

\author{
  Cecilia Cavaterra
  \footnote{Dipartimento di Matematica,
Universit\`a degli Studi di Milano, Via Saldini 50, 20133 Milano,
Italy and Istituto di Matematica Applicata e Tecnologie Informatiche ``Enrico Magenes'', CNR,
Via Ferrata 1, 27100 Pavia, Italy. \texttt{cecilia.cavaterra@unimi.it}}
 \and
  Elisabetta Rocca
  \footnote{Dipartimento di Matematica,
  Universit\`a degli Studi di Pavia, Via Ferrata 5, 27100 Pavia, Italy and
Istituto di Matematica Applicata e Tecnologie Informatiche ``Enrico Magenes'', CNR,
Via Ferrata 1, 27100 Pavia, Italy. \texttt{elisabetta.rocca@unipv.it}}
  \and
  Hao Wu
  \footnote{School of Mathematical Sciences; Key Laboratory of Mathematics for Nonlinear Sciences (Fudan University), Ministry of Education; Shanghai
Key Laboratory for Contemporary Applied Mathematics, Fudan
University, Han Dan Road 220, 200433 Shanghai, China.
    \texttt{haowufd@fudan.edu.cn}}
}

\date{\today}

\maketitle

%%%%%%%%%%%%%%%%%%%%%%%%%%%%%%%%%%%%%%%%%%%%%%%%%%%%%%%%%%%%%%%%%%%%%%%%%%%%%%%%%%%%%%%%%%%%%%%%%%%%%%%%%%%%%%%%%%%%%%%%%%%

\begin{abstract}

We investigate the long-time dynamics and optimal control problem of a diffuse interface model that describes the growth of a tumor in presence
of a nutrient and surrounded by host tissues. The state system consists of a Cahn-Hilliard type equation for the tumor cell
fraction and a reaction-diffusion equation for the nutrient. The possible medication that serves to eliminate tumor cells is in terms of drugs
and is introduced into the system through the nutrient. In this setting, the control variable acts as an external source in the nutrient equation.
First, we consider the problem of ``long-time treatment'' under a suitable given source and prove the convergence of any global solution to a
single equilibrium as $t\to+\infty$.
Then we consider the ``finite-time treatment'' of a tumor, which corresponds to an optimal control problem.
Here we also allow the objective cost functional to depend on a free time variable, which represents the unknown treatment time to be
optimized. We prove the existence of an optimal control and obtain first order necessary optimality conditions for both the drug concentration
and the treatment time.
One of the main aim of the control problem is to realize in the best possible way a desired final distribution of the tumor cells,
which is expressed by the target function $\phi_\Omega$. By establishing the Lyapunov stability of certain equilibria of the state system
(without external source), we see that $\phi_{\Omega}$ can be taken as a stable configuration, so that the tumor will not grow again once the
finite-time treatment is completed.

\end{abstract}

\noindent \textbf{Keywords}: Tumor growth, Cahn-Hilliard equation, reaction-diffusion equation, optimal control, long-time behavior, Lyapunov stability. \\

\noindent
\textbf{AMS Subject Classification}: 35K61, 49J20, 49K20, 92C50, 97M60.

%%%%%%%%%%%%%%%%%%%%%%%%%%%%%%%%%%%%%%%%%%%%%%%%%%%%%%%%%%%%%%%%%%%%%%%%%%%%
\section{Introduction}
\setcounter{equation}{0}

Modeling tumor growth dynamics has recently become a major issue in applied mathematics (see, for instance, \cite{CL10,LF10}, cf. also \cite{AM,Oden13}).
Roughly speaking, the models can be divided into two broad categories: continuum models and discrete or cellular automata models
(however, see, e.g., \cite[Chap.7]{CL10} for some hybrid continuum-discrete models). Concerning the former ones and in particular within the framework of
diffuse interface models, for a young tumor, before the development of quiescent cells, the resulting systems often consist of a Cahn--Hilliard type equation
(cf. \cite{CH}) for the tumor cell fraction coupled with an advection-reaction-diffusion equation for the nutrient concentration (e.g., oxygen)
\cite{CGH15,CGMR,CGRS1,CGRS2,CGRS16,FGR15, GLNeumann,GLNS,GLR17,HKNZ15,HZO,MRS,Oden10,Oden13,RS17}.
More sophisticated models taking into account multiphase tumors or also accounting for the macroscopic cell velocities that usually satisfy a generalized
Darcy's (or Brinkman's) law have been recently studied, e.g., in \cite{BCG,CL10,CLLW09,DFRSS17,EG18,FLRS,GLSS16,GLDarcy,GLNS,LF10,MR,JiangWuZheng14}.
Besides, numerical simulations of diffuse-interface models for tumor growth have been carried out in several papers (see, for instance, \cite[Chap.8]{CL10},
\cite{Ciarletta}, and references therein). Nonetheless, a rigorous mathematical analysis of the resulting systems of partial differential equations is still
in its infancy. To the best of our knowledge, the first related papers are concerned with the so-called Cahn-Hilliard-Hele-Shaw system
(see \cite{LTZ,WW,WZ}), in which the nutrient is neglected. Besides, there are recent contributions (see \cite{CGH15} and \cite{FGR15}) devoted to
analyzing a diffuse interface model proposed in \cite{HZO} and its approximations (see also \cite{HKNZ15,WZZ} and \cite{CGRS1,CGRS2,CGRS16}).

Let $\Omega\subset \mathbb{R}^d$ ($d=2,3$) be a bounded domain with smooth boundary $\partial\Omega$ and let $\nu$ denote the outward unit
normal to $\partial\Omega$. For $T\in (0,+\infty]$, we study the following coupled system of partial differential equations
\begin{align}
& \phi_t-\Delta \mu=P(\phi)(\sigma-\mu),\qquad\qquad\ \quad \text{in}\ \Omega \times (0,T), \label{p1}\\
& \mu=-\Delta \phi+F'(\phi),\qquad\qquad\qquad\qquad  \text{in}\ \Omega \times (0,T), \label{p2}\\
& \sigma_t-\Delta \sigma = -P(\phi)(\sigma-\mu)+u,\quad\qquad \text{in}\ \Omega \times (0,T), \label{p3}
\end{align}
subject to homogeneous Neumann boundary conditions
\begin{align}
\partial_\nu \phi=\partial_\nu\mu=\partial_\nu \sigma=0,\quad \text{on}\ \partial\Omega \times (0,T),\label{bc}
\end{align}
and initial conditions
\begin{align}
\phi|_{t=0}=\phi_0(x),\quad \sigma|_{t=0}=\sigma_0(x),\quad \text{in}\ \Omega.\label{ini}
\end{align}
System \eqref{p1}--\eqref{p3} is an approximation of the model proposed in \cite{HZO}.
The macroscopic velocities of cells are set to zero for the sake of simplicity.
The state variables are reduced to the tumor cell fraction $\phi$ and the nutrient concentration $\sigma$. Typically, $\phi \simeq 1$ and $\phi \simeq -1$
represent the tumor phase and the healthy tissue phase respectively, while $\sigma \simeq 1$ and $\sigma\simeq 0$ indicate in a nutrient-rich or nutrient-poor
extracellular water phase.
The unknown $\mu$ stands for the related chemical potential and the function $F$ is typically a double-well potential with equal minima at
$\phi = \pm 1$ (cf.~\cite{Ciarletta} and references therein for different possible choices of $F$). $P$ denotes a suitable proliferation function, which is in general a nonnegative and regular function of $\phi$.
The function $u$ serves as an external source in the equation for $\s$ and can be interpreted as a medication (or a nutrient supply).

In \cite{FGR15}, the system \eqref{p1}--\eqref{ini} with $u=0$ was rigorously analyzed concerning wellposedness, regularity and long-time behavior (in terms
of the global attractor), while in the recent paper \cite{MRS} the long-time behavior of solutions (in terms of attractors) has been studied for a different system introduced in \cite{GLSS16}. Let us indeed notice that, to the best of our knowledge, these are the only two contriobutions in the literature regarding the long-term dynamics of diffuse interface models for tumor growth.
While, in \cite{CGH15, CGRS1, CGRS2} various viscous approximations of the above system have been studied analytically. Later, for a fixed final
time $T>0$, a distributed optimal control problem for system \eqref{p1}--\eqref{ini} was investigated in \cite{CGRS16}, in which the function $u$ was taken as
the control.
With a simplified cost function of standard tracking type that only involves the phase function $\phi$ and the control $u$, the authors of \cite{CGRS16}
first prove the existence of an optimal control and, moreover, they show that the control-to-state operator is Fr\'echet differentiable between appropriate
Banach spaces and derive the first-order necessary optimality conditions in terms of a variational inequality involving the adjoint state variables.

We note that the choice of reactive terms in equations \eqref{p1} and \eqref{p3} is motivated by the linear phenomenological constitutive laws for chemical
reactions \cite{HZO}. As a consequence, the system \eqref{p1}--\eqref{ini} turns out to be thermodynamically consistent. In particular, when $u=0$ the unknown
pair $(\phi, \sigma)$ is a dissipative gradient flow for the total free energy (see \cite[Section 3]{HZO}, see also \cite{HKNZ15}):
\begin{align}
\mathcal{E}(\phi, \sigma)=\int_\Omega \left[\frac12|\nabla \phi|^2 + F(\phi)\right] \dx +\frac12\int_\Omega \sigma^2 \dx.
\label{E}
\end{align}
Moreover generally, under the presence of the external source $u$, we observe that any smooth solution $(\phi, \sigma)$ to problem \eqref{p1}--\eqref{ini}
satisfies the following energy identity:
\begin{align}
\frac{\mathrm{d}}{\dt} \mathcal{E}(\phi, \sigma) + \int_\Omega \Big[|\nabla \mu|^2+|\nabla \sigma|^2+ P(\phi)(\mu-\sigma)^2\Big] \dx
=\int_\Omega u\sigma \dx,\quad \forall\, t>0,\label{BEL}
\end{align}
which motives the twofold aim of the present contribution.
\begin{description}
\item[1.] \textbf{Long-time treatment of medication}. For a suitably given external source $u$, we study the long-term dynamics of problem \eqref{p1}--\eqref{ini}.
We prove that any global weak solution will converge to a single equilibrium as $t\to +\infty$ and provide an estimate on the convergence rate. The related main
result is stated in Section~\ref{main:long} (see Theorem \ref{longtime}).

In this direction, our result indicates that after certain medication (or even without medication, i.e., $u=0$), the tumor will eventually grow to a steady state
as time evolves. However, since the potential function $F$ is nonconvex due to its double-well structure, problem \eqref{p1}--\eqref{ini} may admit infinite many
steady states so that for the moment one cannot identify which exactly the unique asymptotic limit as $t\to +\infty$ will be.

\item[2.] \textbf{Finite-time treatment of medication}. We investigate a more general distributed optimal control problem (cf. \cite{CGRS16}), where we allow the
objective cost functional to depend also on a free time variable, representing the unknown treatment time to be optimized. More precisely, denoting by
$T \in (0,+\infty)$ a fixed maximal time in which the patient is allowed to undergo a medical treatment, we consider \\
    \vspace{1mm}
    \noindent \textbf{(CP)} \ \ \textit{Minimize the cost functional}
\begin{align}
{\cal J}(\vp,\s,u, \tau)=&\ \frac{\beta_Q}2\int_0^\tau\!\!\ioma |\vp-\vp_Q|^2\dx\dt\,+\,
\frac{\beta_\Omega}2\ioma |\vp(\tau)-\vp_\oma|^2\dx \nonumber\\
&+\frac{\alpha_Q}{2}\int_0^\tau\!\!\ioma|\s-\s_Q|^2\dx\dt +\frac{\beta_S}{2}\int_\Omega(1+\phi(\tau))\dx\nonumber\\
&+\,\frac{\beta_u}2\iTT\!\!\ioma |u|^2\dx\dt+\beta_T\tau,
\label{cost:i}
\end{align}
\textit{subject to the control constraint}
\begin{equation} \label{Uad:i}
u\in\uad:=\{u\in L^\infty(Q):\,\,u_{\rm {min}}\le u\le u_{\rm max} \,\,\,\mbox{a.\,e. in }\,Q\}, \quad \tau\in (0,T),
\end{equation}
\textit{and to the state system} \eqref{p1}--\eqref{ini}, \textit{where} $Q := \Omega \times (0,T)$.

Here, $\tau \in (0,T]$ represents the treatment time,  $\phi_{Q}$ and $\sigma_Q$ represent a desired evolution for the tumor cells
and for the nutrient, respectively, while $\phi_{\Omega}$ stands for desired final distribution of tumor cells.
The first three terms of $\mathcal{J}$ are of standard tracking type, as often considered in the literature of parabolic optimal control,
and the fourth term of $\mathcal{J}$ measures the size of the tumor at the end of the treatment.
The fifth term penalizes large concentrations of the cytotoxic drugs, and the sixth term of $\mathcal{J}$ penalizes long treatment times.
As it is presented in $\mathcal{J}$, a large value of $|\phi - \phi_{Q}|^{2}$ would mean that the patient suffers from the growth
of the tumor, and a large value of $|u|^2$ would mean that the patient suffers from high toxicity of the drug. We shall prove the existence
of an optimal control and derive the first-order necessary optimality conditions in terms of a variational inequality involving the adjoint state variables.
The related main results are stated in Section~\ref{main:opt}.

The variable $\tau$ can be regarded as the necessary treatment time of one cycle, i.e., the amount of time the drug is applied to the patient before the
period of rest, or the treatment time before surgery. After the treatment, the ideal situation will be either the tumor is ready for surgery or the tumor
will be stable for all time without further medication (i.e., $u=0$). This goal can be realized by making different choices of the target function
$\phi_\Omega$ in the above optimal control problem \textbf{(CP)}.
For the former case, one can simply take $\phi_{\Omega}$ to be a configuration that is suitable for surgery. While for the later case, which is of more interest
to us, we want to choose $\phi_{\Omega}$ as a ``stable'' configuration of the system, so that the tumor does not grow again once the treatment is complete.
For this purpose, we prove that any local minimizer of the total free energy $\mathcal{E}$ is Lyapunov stable provided that $u=0$ (see Theorem \ref{stability}).
As a consequence, these local energy minimizers serve as possible candidates for the target function $\phi_\Omega$. Then after completing a successful medication,
the tumor will remain close to the chosen stable configuration for all time.
\end{description}

Let us briefly describe some ingredients in the mathematical analysis. The study of long-time behavior of problem \eqref{p1}--\eqref{ini} is nontrivial,
since the nonconvexity of the free energy $\mathcal{E}$ indicates that the set of steady states may have a rather complicated structure.
For the single Cahn-Hilliard equation this difficulty can be overcome by employing the \L ojasiewicz-Simon approach \cite{S83}, see, for instance,
\cite{RH99,GW08,LW19,W07,WZ04}. We also refer to \cite{CJ,FS,HT01,WGZ07,GGW18,JiangWuZheng14} and the references cited therein for further applications.
A key property that plays an important role in the analysis of the Cahn-Hilliard equation is the conservation of mass, i.e.,
$\int_\Omega \phi(t) \dx=\int_\Omega \phi_0 \dx$ for $t\geq 0$.
However, for our coupled system \eqref{p1}--\eqref{ini} this property no longer holds, which brings us new difficulties in analysis.
Besides, quite different from the Cahn-Hilliard-Oono system considered in \cite{M11}, in which the mass  $\int_\Omega \phi(t) \dx$ is not preserved due to
possible reactions, here in our case it is not obvious how to control the mass changing rate:
$$ \frac{\mathrm{d}}{\dt}\int_\Omega \phi \dx = \int_\Omega P(\phi)(\sigma-\mu) \dx.$$
Similar problem happens to the nutrient as well, that is
$$ \frac{\mathrm{d}}{\dt}\int_\Omega \sigma \dx = -\int_\Omega P(\phi)(\sigma-\mu) \dx +\int_\Omega u \dx.$$
Nevertheless, by the special cancellation between those reactive terms in \eqref{p1} and \eqref{p3}, we see that the total mass can be determined by the initial
data and the external source:
\[
\int_\Omega (\phi(t)+\sigma(t))\, \dx=\int_\Omega(\phi_0+\sigma_0)\, \dx+\int_0^t\int_\Omega u\, \dx\, \mathrm{d}\tau,\quad \forall\, t\geq 0.
\]
This observation allows us to derive a suitable version of the \L ojasiewicz-Simon type inequality in the sprit of \cite{Zhang,WGZ07} (see Appendix).
On the other hand, we can control the mass changing rates of $\phi$ and $\sigma$ by using the extra energy dissipation related to reactive terms in the basic
energy law \eqref{BEL}, i.e., $\int_\Omega P(\phi)(\mu-\sigma)^2\dx$.
Based on the above mentioned special structure of the system, by introducing a new version of \L ojasiewicz-Simon inequality (see Lemma \ref{LS2}),
we are able to prove that every global weak solution $(\phi,\sigma)$ of problem \eqref{p1}--\eqref{ini} will converge to a certain single equilibrium
$(\phi_\infty, \sigma_\infty)$ as $t\to+\infty$ and, moreover, we obtain a polynomial decay of the solution. Besides, a nontrivial application of the
\L ojasiewicz-Simon approach further leads to the Lyapunov stability of local minimizers of the free energy $\mathcal{E}$ (we only consider the zero
external mass case $u=0$ for the sake of simplicity). To the best of our knowledge, the only contribution in the study of long-time behavior for problem
\eqref{p1}--\eqref{ini} is given in \cite{FGR15} with $u=0$, where, however, the main focus is the existence of a global attractor (cf. also \cite{MRS} for a different model). The novelty of
our work is that we provide a first contribution in the literature on the uniqueness of asymptotic limit of global solutions as well as Lyapunov stability
of steady states for the diffuse interface models on tumor growth with reaction terms like in \eqref{p1}--\eqref{p3}.

Next, we give some further comments on the optimal problem \textbf{(CP)}. As in \cite{CGRS16}, here we aim to search for a medical strategy
such that the integral over the full space-time domain of the squared amount of nutrient or drug supplied (which is restricted by the control constraints)
does not inflict any harm on the patient (which is expressed by the presence of the fifth summand in the cost functional $\mathcal{J}$).
The non-negative coefficients $\beta_Q$, $\beta_\Omega$, $\alpha_Q$, $\beta_S$, $\beta_T$, $\beta_u$ indicate importance of conflicting targets given in the
strategy, for instance, ``avoid unnecessary harm to the patient'' versus ``quality of the approximation of $\phi_Q$, $\phi_\Omega$''.
In the cost functional $\mathcal{J}$, we could also add a point-wise term of the type $\ioma |\sigma(\tau)-\sigma_\Omega|^2\dx$, or we could replace the
term $\iTT\ioma|u|^2\dx\dt$ by a $\tau$-dependent term $\int_0^\tau \int_\Omega |u|^2\dx\dt$, but both would imply that we have to look for a control $u$
in a more regular space $H^1(0,T;L^2(\Omega))$, which is less interesting in view of practical applications (cf. \cite{GLR17} for further
discussions on this issue). Besides, it is possible to replace $\beta_{T} \tau$ by a more general function $f(\tau)$ where
$f: \mathbb{R}_{\geq 0} \to \mathbb{R}_{\geq 0}$ is continuously differentiable and increasing. In practice it would be safer for the patient (and thus
more desirable) to approximate the target functions in the $L^\infty$ sense rather than in the $L^2$ sense; however,
in view of the analytical difficulties that are inherent to the highly nonlinear state system \eqref{p1}--\eqref{ini}, this presently seems to be out of reach
(see also \cite{CGRS16}). Another interesting problem would be the one including a pointwise state constraint on the variable $\phi$ of the type $|\phi(x,\tau)-\phi_\Omega(x)|\leq \epsilon$ for a.e.~$x$, which could be reduced to an $L^2$-constraint $\|\phi(\tau)-\phi_\Omega\|_{L^2(\Omega)}\leq \epsilon'$ by using possible regularity of $\phi$, $\phi_\Omega$ (if available). This leads to a more involved adjoint system and it will be the subject of future works.

Regarding the existing literature on the aspect of optimal control, we mention the works of \cite{CFGS1, CFGS2, CGS1, CGS2, hw} for the
Cahn-Hilliard equation, \cite{RS,ZL1,ZL2} for the convective Cahn-Hilliard equation, \cite{FRS,HW1} for the Cahn-Hilliard-Navier-Stokes system,
\cite{SW18} for the Cahn-Hilliard-Darcy system, \cite{CS} for Allen-Cahn equation and \cite{CRW} for a liquid crystal model.
In the context of PDE constraint optimal control for diffuse interface tumor models, we have basically two recent works:  \cite{CGRS16} and \cite{GLR17}.
In \cite{CGRS16} the objective functional is \eqref{cost:i}, with the special (simpler) choices $\beta_{S} = \beta_{T} =\alpha_Q= 0$, and the state system is
exactly \eqref{p1}--\eqref{ini} but no dependence on $\tau$ is studied. In \cite{GLR17} a different diffuse interface model resulting as a particular case of a more general model introduced in \cite{GLSS16},
is studied. There the distributed control appears in the $\phi$ equation, which is a Cahn-Hilliard type equation with a source of mass on the right hand side,
but not depending on $\mu$.
Due to the presence of the control in the Cahn-Hilliard equation, in \cite{GLR17} only the case of a regularized objective cost functional can be analyzed for
bounded controls. Finally, we would also quote the recent paper \cite{CGMR}, where the authors study the problem of sliding mode control
for a simplified version of the model introduced in \cite{GLSS16}. With our work we aim to provide a contribution to the theory of free terminal time optimal
control in the context of diffuse interface tumor models, where the control is applied in the nutrient equation.

The rest of this paper is organized as follows.
In Section 2, we formulate the general hypotheses
and state some known results regarding the well-posedness, regularity as well as continuous dependence result of the state system \eqref{p1}--\eqref{ini}.
In Section 3, we study the long-time behavior of the system \eqref{p1}--\eqref{ini} under suitable assumption on the external source $u$,
and in Section 4, we prove Lyapunov stability of local minimizers of $\mathcal{E}$ with zero mass $u=0$.
The results concerning existence and first-order necessary optimality conditions for the optimal control problem \textbf{(CP)} are shown in Section 5.
In Appendix, we give a brief derivation of an extended \L ojasiewicz-Simon type inequality.

%%%%%%%%%%%%%%%%%%%%%%%%%%%%%%%%%%%%%%%%%%%%%%%%%%%%%%%%%%%%%%%%%%%%%%%%%%%%
\section{Preliminaries}
\setcounter{equation}{0}
\subsection{Notations and assumptions}
\label{notation}
Throughout this paper, for a (real) Banach space $X$ we denote by $\|\cdot\|_X$ its norm, by $X'$ its
dual space, and by $\langle\cdot,\cdot\rangle_{X',X}$ the dual pairing between $X'$ and $X$. If $X$ is a Hilbert space,
then the inner product is denoted by $(\cdot, \cdot)_X$. Next, $L^q(\Omega)$, $1 \leq q \leq \infty$ denotes the usual Lebesgue space in $\Omega$ and
$\|\cdot\|_{L^q(\Omega)}$ denotes its norm.
Similarly, $W^{m,q}(\Omega)$, $m \in \mathbb{N}$, $1 \leq q \leq \infty$, denotes the usual Sobolev space with norm $\|\cdot \|_{W^{m,p}(\Omega)}$.
When $q=2$, we simply indicate $W^{m,2}(\Omega)$ by $H^m(\Omega)$. For simplicity, the inner product in $L^2(\Omega)$ will be indicated by $(\cdot, \cdot)$.
Let $I$ be an interval of $\mathbb{R}^+$ and $X$ a Banach space, the function space $L^p(I;X)$, $1 \leq p \leq \infty$ consists of $p$-integrable
functions with values in $X$. Moreover, $C_w(I;X)$ denotes the topological vector space of all bounded and weakly continuous functions from $I$ to $X$, while
$W^{1,p}(I,X)$ $(1\leq p\leq \infty)$ stands for the space of all functions $u$ such that $u, \frac{\mathrm{d}u}{\dt}\in L^p(I;X)$, where
$\frac{\mathrm{d}u}{\dt}$ denotes the vector valued distributional derivative of $u$. Bold characters will be used to denote vector spaces.

Given any function $u \in (H^1(\Omega))'$, we define the mean value by
$$\overline{u} = |\Omega|^{-1}\langle u, 1\rangle_{(H^1)',H^1}.$$
If $u\in L^1(\Omega)$, we simply have $\overline{u}=|\Omega|^{-1}\int_\Omega u \dx$.
We will use the notations
\begin{align*}
\dot{L}^2(\Omega)&=\{u \in L^2(\Omega)\;|\;\; \overline{u}=0\},\\
H^2_N(\Omega)&=\{u \in H^2(\Omega)\;|\;\;\partial_{\nu}u=0 \;\;\text{a.e. on}\;\;\partial\Omega\},\\
H^4_N(\Omega)&=\{\varphi\in H^4(\Omega)\;|\;\;\partial_{\nu}\varphi=\partial_{\nu} \Delta \varphi=0 \;\;\text{on}\;\;\partial\Omega\}.
\end{align*}
Then we have the dense and continuous embeddings $H^2_N \subset H^1 \subset L^2 \cong L^2 \subset (H^1)' \subset (H^2_N)'$
(we omit to indicate the set $\Omega$ for the sake of brevity), where
$\langle u, v\rangle_{(H^1)', H^1}=(u, v)$ and $\langle u, w\rangle_{(H^2_N)', H^2_N}=(u,w)$
for any $u\in L^2(\Omega)$, $v\in H^1(\Omega)$
and $w\in H^2_N(\Omega)$. On the other hand, we observe that the operator $A:=-\Delta$ with its domain $D(A)=H^2_{N}(\Omega)\cap  \dot{L}^2(\Omega)$
is a positively defined, self-adjoint operator on $D(A)$ and the spectral theorem enables us to define the powers $A^s$ of $A$, for $s\in\mathbb{R}$.
Then the space $(H^1(\Omega))'$ can be endowed with the equivalent norm
$\|u\|^2_{(H^1(\Omega))'}=\|\nabla A^{-1}(u-\overline{u})\|_{L^2(\Omega)}^2+|\overline{u}|^2$.

Throughout the paper, $C\geq 0$ will stand for a generic constant and $\mathcal{Q}(\cdot)$ for a generic positive monotone increasing function.
Special dependence will be pointed out in the text if necessary. \medskip

We make the following assumptions on the nonlinear functions $P$ and $F$.
\begin{itemize}
\item[\textbf{(P1)}] $P\in C^{2}(\mathbb{R})$ is nonnegative. There exist $\alpha_1>0$ and some $q\in [1,4]$ such that, for all $s\in \mathbb{R}$,
    \begin{align}
    |P'(s)|\leq \alpha_1(1+|s|^{q-1}).
    \end{align}
\item[\textbf{(F1)}] $F=F_0+F_1$, with $F_0, F_1\in C^5(\mathbb{R})$. There exist $\alpha_i>0$, $i=2,...,6$ and $r\in [2,6)$ such that, for all
$s\in \mathbb{R}$,
\begin{align}
&|F_1''(s)|\leq \alpha_2, \\
&\alpha_3(1+|s|^{r-2})\leq F_0''(s)\leq \alpha_4(1+|s|^{r-2}),\\
&F(s)\geq \alpha_5 |s| -\alpha_6.
\end{align}

\item[\textbf{(U1)}] For any $T>0$,  $u \in L^2(0,T; L^2(\Omega))$.
\end{itemize}
\begin{remark}
Well-posedness of problem \eqref{p1}--\eqref{ini} with $u=0$ has been obtained in \cite{FGR15} under slightly weaker conditions than \textbf{(P1)}
and \textbf{(F1)}.
On the other hand, the assumptions \textbf{(P1)}, \textbf{(F1)} and \textbf{(U1)} were indispensable for the analysis of the optimal control problem in
\cite{CGRS16}. In this paper, to avoid unnecessary technical details, we do not aim to pursue optimal assumptions on functions $F$ and $P$.
\end{remark}

%%%%%%%%%%%%%%%%%%%%%%%%%%%%%%%%%%%%%%%%%%%%%%%%%%%%%%%%%%%%%%%%%%%%%%%%%%%
\subsection{Well-posedness and continuous dependence}
\label{well}

We first state the following result on well-posedness and continuous dependence of global weak solutions for problem \eqref{p1}--\eqref{ini}
(see \cite[Theorem 1, Theorem 2]{FGR15} for the autonomous case $u=0$).
\begin{proposition} \label{weak}
Assume that \textbf{(P1)}, \textbf{(F1)} and \textbf{(U1)} are satisfied.

(1) Let $\phi_0\in H^1(\Omega)$ and $\sigma_0\in L^2(\Omega)$. Then, for every $T>0$, problem \eqref{p1}--\eqref{ini} admits a unique weak solution
on $[0,T]$ such that
\begin{align*}
&\phi\in L^\infty(0,T; H^1(\Omega))\cap L^2(0, T; H^2_N(\Omega)\cap H^3(\Omega))\cap H^1(0,T; (H^1(\Omega))'),\\
&\sigma\in L^\infty(0,T; L^2(\Omega))\cap L^2(0, T; H^1(\Omega))\cap H^1(0,T; (H^1(\Omega))'),\\
&\mu\in L^2(0,T; H^1(\Omega)),\quad  \sqrt{P(\phi)}(\mu-\sigma)\in L^2(0, T; L^2(\Omega)).
\end{align*}
In addition, the following identities are satisfied, for a.e. $t\in (0,T)$ and for any $\xi \in H^1(\Omega)$,
\begin{align}
&\langle \phi_t, \xi\rangle_{(H^1)', H^1}+(\nabla \mu, \nabla \xi)=(P(\phi)(\mu-\sigma), \xi),\label{w1}\\
&\langle\sigma_t, \xi\rangle_{(H^1)', H^1}+(\nabla \sigma, \nabla \xi)=-(P(\phi)(\mu-\sigma), \xi)+(u,\xi),\label{w2}
\end{align}
as well as $\mu=-\Delta \phi+F'(\phi)$, a.e. in $\Omega\times (0,T)$,  together with the initial conditions \eqref{ini}.
Moreover, the following energy identity holds
\begin{align}
&\mathcal{E}(\phi(t), \sigma(t))
+\int_0^t(\|\nabla \mu\|_{L^2(\Omega)}^2+\|\nabla \sigma\|_{L^2(\Omega)}^2) \mathrm{d}\tau
+\int_0^t\int_\Omega P(\phi)(\mu-\sigma)^2 \dx\mathrm{d}\tau\nonumber\\
&\quad =\mathcal{E}(\phi_0, \sigma_0) + \int_0^t\int_\Omega u\sigma \dx\mathrm{d}\tau,\quad  \forall\, t\in[0,T],\label{BEL1}
\end{align}
where $\mathcal{E}$ is given by \eqref{E}.

(2) Let $(\phi_{i0}, \sigma_{i0})\in H^1(\Omega)\times L^2(\Omega)$ $(i=1,2)$ be two initial data, $u_i$ be two external source terms and $(\phi_i, \sigma_i)$
be the corresponding weak solutions, $i=1,2$. Then, the following continuous dependence estimate holds
\begin{align}
&\|\phi_1-\phi_2\|_{L^\infty(0,T; (H^1(\Omega))')\cap L^2(0,T; H^1(\Omega))}
+\|\sigma_1-\sigma_2\|_{L^\infty(0,T; (H^1(\Omega))')\cap L^2(0,T; L^2(\Omega))}\nonumber\\
&\quad \leq C_T(\|\phi_{10}-\phi_{20}\|_{(H^1(\Omega))'}+\|\sigma_{10}-\sigma_{20}\|_{(H^1(\Omega))'}+ \|u_1-u_2\|_{L^2(0,T; L^2(\Omega))}),\nonumber
\end{align}
where $C_T>0$ is a constant depending on $\|\phi_{i0}\|_{H^1(\Omega)}$, $\|\sigma_{i0}\|_{L^2(\Omega)}$, $\|u_i\|_{L^2(0,T; L^2(\Omega))}$, $\Omega$ and $T$.
\end{proposition}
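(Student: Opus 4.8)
Since the case $u=0$ is exactly \cite[Theorem 1, Theorem 2]{FGR15} (see also \cite{CGRS16}), I would only indicate the modifications needed to include a source $u\in L^2(0,T;L^2(\Omega))$. For part (1) I would run the Faedo--Galerkin scheme of \cite{FGR15}: project \eqref{p1}--\eqref{ini} onto the span of the first $n$ eigenfunctions of $A=-\Delta$ with homogeneous Neumann conditions, solve the resulting ODE system, and test the approximate \eqref{p1} with the approximate chemical potential $\mu_n=-\Delta\phi_n+P_nF'(\phi_n)$ (where $P_n$ is the $L^2$-projection) and the approximate \eqref{p3} with the approximate nutrient $\sigma_n$. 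Adding the two identities reproduces the energy identity \eqref{BEL1} at the Galerkin level, the only new term being $\int_\Omega u\sigma_n\dx\le\frac12\|u\|_{L^2(\Omega)}^2+\frac12\|\sigma_n\|_{L^2(\Omega)}^2\le\frac12\|u\|_{L^2(\Omega)}^2+\mathcal{E}(\phi_n,\sigma_n)$. Since $\mathcal{E}$ is coercive on $H^1(\Omega)\times L^2(\Omega)$ by \textbf{(F1)} and $u\in L^2(0,T;L^2(\Omega))$ by \textbf{(U1)}, Gronwall's lemma yields, uniformly in $n$, the bounds for $\phi_n$, $\sigma_n$, $\nabla\mu_n$ and $\sqrt{P(\phi_n)}(\mu_n-\sigma_n)$ dictated by the energy. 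Controlling $\overline{\mu_n}$ by testing $\mu_n=-\Delta\phi_n+P_nF'(\phi_n)$ with constants and estimating $\|F'(\phi_n)\|_{L^1(\Omega)}$ by the standard argument (using \textbf{(F1)} and Poincar\'e--Wirtinger) upgrades the bound on $\mu_n$ to $L^2(0,T;H^1(\Omega))$; elliptic regularity for $-\Delta\phi_n+F_0'(\phi_n)=\mu_n-F_1'(\phi_n)$ followed by a bootstrap as in \cite{FGR15} gives $\phi_n\in L^2(0,T;H^2_N\cap H^3)$ and, by comparison, $\phi_{n,t},\sigma_{n,t}\in L^2(0,T;(H^1(\Omega))')$. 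Passing to the limit via Banach--Alaoglu and the Aubin--Lions--Simon lemma (which gives $\phi_n\to\phi$ in $C([0,T];L^2(\Omega))$ and $\sigma_n\to\sigma$ in $L^2(0,T;L^2(\Omega))$, enough to treat $F'(\phi_n)$ and $P(\phi_n)(\mu_n-\sigma_n)$ thanks to \textbf{(P1)}, \textbf{(F1)} and $H^2(\Omega)\hookrightarrow L^\infty(\Omega)$ for $d\le3$) produces a weak solution with the stated regularity; the energy identity \eqref{BEL1} is recovered as an equality by testing \eqref{w1} with $\mu$ and \eqref{w2} with $\sigma$, which is legitimate because $\phi\in L^\infty(0,T;H^1)\cap L^2(0,T;H^3)$ and $\phi_t\in L^2(0,T;(H^1(\Omega))')$ imply $F'(\phi)\in L^2(0,T;H^1(\Omega))$, justifying the chain rule for $\frac{\mathrm{d}}{\mathrm{d}t}\int_\Omega F(\phi)\dx$.

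For part (2), write $\phi:=\phi_1-\phi_2$, $\sigma:=\sigma_1-\sigma_2$, $\mu:=\mu_1-\mu_2$, $u:=u_1-u_2$, and split the difference of the reaction terms as $R:=P(\phi_1)(\sigma-\mu)+\big(P(\phi_1)-P(\phi_2)\big)(\sigma_2-\mu_2)$. The plan is a difference estimate in the dual norm: test the zero-mean parts of the $\phi$- and $\sigma$-equations with $A^{-1}(\phi-\overline{\phi})$ and $A^{-1}(\sigma-\overline{\sigma})$ respectively and add. Using $\mu=-\Delta\phi+\big(F'(\phi_1)-F'(\phi_2)\big)$ and $\partial_\nu\phi=0$, the elliptic term produces $\|\nabla\phi\|_{L^2(\Omega)}^2+\big(F'(\phi_1)-F'(\phi_2),\phi-\overline{\phi}\big)$, where the $F_0'$ part is nonnegative up to a term involving $\overline{\phi}$ and the $F_1'$ part is controlled by Lipschitz continuity and the interpolation $\|\phi-\overline{\phi}\|_{L^2(\Omega)}\le\varepsilon\|\nabla\phi\|_{L^2(\Omega)}+C_\varepsilon\|\phi-\overline{\phi}\|_{(H^1(\Omega))'}$; the nutrient term produces the coercive $\|\sigma-\overline{\sigma}\|_{L^2(\Omega)}^2$, and the source contributes only $\big(u,A^{-1}(\sigma-\overline{\sigma})\big)\le\varepsilon\|\sigma-\overline{\sigma}\|_{(H^1(\Omega))'}^2+C\|u\|_{L^2(\Omega)}^2$. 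In parallel one controls the spatial means: $\frac{\mathrm{d}}{\mathrm{d}t}\overline{\phi+\sigma}=\overline{u}$ bounds $\overline{\phi+\sigma}$, while $\frac{\mathrm{d}}{\mathrm{d}t}\overline{\phi}=\overline{R}$ is estimated using \textbf{(P1)}, the regularity of $(\phi_i,\mu_i,\sigma_i)$ from part (1) and the substitution for $\mu$ described below, so that the surviving pieces can be absorbed into $\|\nabla\phi\|_{L^2(\Omega)}^2+\|\sigma-\overline{\sigma}\|_{L^2(\Omega)}^2$. Gronwall's lemma then yields the asserted continuous dependence estimate, and uniqueness in part (1) is the special case $\phi_{10}=\phi_{20}$, $\sigma_{10}=\sigma_{20}$, $u_1=u_2$.

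The step I expect to be the main obstacle is the reaction coupling $P(\phi_1)(\sigma-\mu)$ in part (2): the contribution of $\mu=\mu_1-\mu_2$ cannot be estimated directly, since $\mu$ is not controlled in $L^2(0,T;L^2(\Omega))$ by the quantities appearing on the left-hand side (only $\nabla\phi$, not $\Delta\phi$, is). The remedy is to substitute $\mu=-\Delta\phi+\big(F'(\phi_1)-F'(\phi_2)\big)+\overline{\mu}$ wherever $\mu$ occurs and integrate by parts so as to move the Laplacian onto a well-behaved factor; for $w\in\{A^{-1}(\phi-\overline{\phi}),\,A^{-1}(\sigma-\overline{\sigma})\}$,
\[
\int_\Omega P(\phi_1)\,w\,\Delta\phi\dx=-\int_\Omega\nabla\phi\cdot\big(P'(\phi_1)\nabla\phi_1\,w+P(\phi_1)\nabla w\big)\dx,
\]
the boundary term vanishing since $\partial_\nu\phi=0$. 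The factor $\|\nabla\phi\|_{L^2(\Omega)}$ is then absorbed into the dissipation, $\|\nabla w\|_{L^2(\Omega)}$ equals $\|\phi-\overline{\phi}\|_{(H^1(\Omega))'}$ or $\|\sigma-\overline{\sigma}\|_{(H^1(\Omega))'}$ and enters the Gronwall quantity, and the coefficients $P(\phi_1)$, $P'(\phi_1)$, $\nabla\phi_1$ are estimated via \textbf{(P1)} and the $L^\infty(0,T;H^1)\cap L^2(0,T;H^2)$-regularity of $\phi_1$ (whose $H^2(\Omega)\hookrightarrow L^\infty(\Omega)$ bound is only $L^2$ in time, which produces an $L^1$-in-time Gronwall coefficient --- still sufficient). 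The remaining piece $\big(P(\phi_1)-P(\phi_2)\big)(\sigma_2-\mu_2)$ is handled in the same spirit after writing $P(\phi_1)-P(\phi_2)=\big(\int_0^1P'(\phi_2+\theta\phi)\,\mathrm{d}\theta\big)\phi$ and applying H\"older's inequality with the $L^2(0,T;H^1(\Omega))$-regularity of $\sigma_2$ and $\mu_2$.
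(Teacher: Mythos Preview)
Your proposal is correct and matches the paper's own treatment: the paper gives no detailed proof but simply remarks that Proposition~\ref{weak} ``can be proved by means of a double approximation procedure exactly as in \cite{FGR15}'' and that ``the presence of the external source term $u$ only yields a minor modification.'' Your sketch spells out precisely those minor modifications (the extra $\int_\Omega u\sigma_n\,\mathrm{d}x$ term absorbed via Gronwall in the energy estimate, and the additional $\|u_1-u_2\|_{L^2}$ contribution in the dual-norm continuous dependence argument), so you are in full agreement with the paper---indeed you provide more detail than it does.
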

\begin{remark}
We note that Proposition \ref{weak} can be proved by means of a double approximation procedure exactly as in \cite{FGR15}.
The presence of the external source term $u$ only yields a minor modification in the proof, thus the details can be omitted here.
\end{remark}

If the initial datum $(\phi_0, \sigma_0)$ is smoother, the following result on well-posedness and continuous dependence of global strong solutions
was obtained in \cite[Theorem 2.1, Theorem 2.2]{CGRS16}.

\begin{proposition}\label{strongexe}
Assume that \textbf{(P1)}, \textbf{(F1)} and \textbf{(U1)} are satisfied.
Let $\phi_0\in H^2_N(\Omega)\cap H^3(\Omega)$ and $\sigma_0\in H^1(\Omega)$.

(1) For every $T>0$, problem \eqref{p1}--\eqref{ini} admits a unique strong solution on $[0,T]$ such that
\begin{align*}
&\phi\in L^\infty(0,T; H^2_N(\Omega)\cap H^3(\Omega))\cap L^2(0,T; H^4_N(\Omega))\cap H^1(0,T; H^1(\Omega)),\\
&\mu\in L^\infty(0,T; H^1(\Omega))\cap L^2(0,T; H^2_N(\Omega)),\\
&\sigma\in C([0,T]; H^1(\Omega))\cap L^2 (0,T; H^2_N(\Omega))\cap H^1(0, T; L^2(\Omega)).
\end{align*}

(2) There exists a constant $K_1>0$, depending on $\|\phi_0\|_{H^3(\Omega)}$, $\|\sigma_0\|_{H^1(\Omega)}$, $\|u\|_{L^2(0, T; L^2(\Omega))}$, $\Omega$ and $T$
such that the strong solution $(\phi, \mu, \sigma)$ satisfies
\begin{align}
& \|\phi\|_{L^\infty(0,T;H^3(\Omega))\cap L^2(0,T; H^4(\Omega))\cap H^1(0,T; H^1(\Omega))}
+\|\mu\|_{L^\infty(0,T; H^1(\Omega))\cap L^2(0,T; H^2(\Omega))}\nonumber\\
&\quad +\|\sigma\|_{C([0,T]; H^1(\Omega))\cap L^2 (0,T; H^2_N(\Omega))\cap H^1(0, T; L^2(\Omega))}
\leq K_1.
\label{strhi}
\end{align}

(3) For arbitrary $u_i\in L^2(0,T; L^2(\Omega))$ $(i=1,2)$, $\phi_0\in H^2_N(\Omega)\cap H^3(\Omega)$ and $\sigma_0\in H^1(\Omega)$,
let $(\phi_i, \sigma_i)$ be the corresponding strong solutions. Then there exists a constant $K_2>0$, depending on $\|u_i\|_{L^2(0, T; L^2)}$, $\Omega$, $T$,
$\|\phi_0\|_{H^3}$ and $\|\sigma_0\|_{H^1}$, such that
\begin{align*}
&\|\phi_1-\phi_2\|_{L^\infty(0,T;H^1(\Omega))\cap L^2(0,T; H^3(\Omega))\cap H^1(0,T; (H^1(\Omega))')}
+\|\mu_1-\mu_2\|_{L^2(0,T; H^1(\Omega))}\\
&\quad +\|\sigma_1-\sigma_2\|_{C([0,T]; H^1(\Omega))\cap L^2 (0,T; H^2(\Omega))\cap H^1(0, T; L^2(\Omega))}
\leq K_2 \|u_1-u_2\|_{L^2(0,T; L^2(\Omega))}.
\end{align*}
\end{proposition}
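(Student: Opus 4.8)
The plan is to take the unique global weak solution furnished by Proposition~\ref{weak} and, under the smoother data $\phi_0\in H^2_N(\Omega)\cap H^3(\Omega)$, $\sigma_0\in H^1(\Omega)$, to bootstrap its regularity by a hierarchy of formal a priori estimates — rigorously justified on the level of a Faedo--Galerkin (or double-regularized) approximation scheme of the type used in \cite{FGR15} — and then to pass to the limit by weak/weak-$*$ compactness together with the Aubin--Lions lemma; this is the route followed in \cite{CGRS16}. Uniqueness of the strong solution is already contained in Proposition~\ref{weak}(1), so the work concentrates on the regularity statement in (1), the quantitative bound \eqref{strhi} in (2), and the strong-norm continuous dependence in (3).

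First, the energy identity \eqref{BEL1} (with $\mathcal{E}$ as in \eqref{E}) gives $\phi\in L^\infty(0,T;H^1)$, $\sigma\in L^\infty(0,T;L^2)\cap L^2(0,T;H^1)$, $\nabla\mu\in L^2(0,T;L^2)$ and $\sqrt{P(\phi)}(\mu-\sigma)\in L^2(0,T;L^2)$; the mean values $\overline\mu$, $\overline\sigma$, $\overline\phi$ are then controlled via $\overline\mu=|\Omega|^{-1}\int_\Omega F'(\phi)\dx$, the growth bounds in \textbf{(F1)}, and the total-mass balance $\int_\Omega(\phi+\sigma)\dx=\int_\Omega(\phi_0+\sigma_0)\dx+\int_0^t\int_\Omega u\dx\,\mathrm d\tau$, upgrading $\mu,\sigma$ to full $L^2(0,T;H^1)$ bounds. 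Next, $-\Delta\phi=\mu-F'(\phi)$ together with elliptic regularity and Gagliardo--Nirenberg interpolation (exploiting $r<6$ and $q\le4$ in dimension $d\le3$) yields $\phi\in L^2(0,T;H^2)$; a by-now standard higher-order estimate — differentiating the $\phi$-equation in time and testing with $\partial_t\mu$ (or, equivalently, $A\,\partial_t\phi$), integrating by parts in time and absorbing the reaction terms — produces the crucial $\mu\in L^\infty(0,T;H^1)$ and $\phi_t\in L^2(0,T;H^1)$. Feeding $\nabla\mu\in L^\infty(0,T;L^2)$ and $\nabla F'(\phi)=F''(\phi)\nabla\phi$ back into $-\Delta\phi=\mu-F'(\phi)$ promotes this to $\phi\in L^\infty(0,T;H^3)$. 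The nutrient equation \eqref{p3} is now linear and uniformly parabolic with right-hand side $-P(\phi)(\sigma-\mu)+u\in L^2(0,T;L^2)$, so maximal parabolic regularity with $\sigma_0\in H^1$ gives $\sigma\in C([0,T];H^1)\cap L^2(0,T;H^2_N)\cap H^1(0,T;L^2)$. Finally, $-\Delta\mu=\phi_t-P(\phi)(\sigma-\mu)\in L^2(0,T;L^2)$ gives $\mu\in L^2(0,T;H^2_N)$, whence $-\Delta\phi=\mu-F'(\phi)\in L^2(0,T;H^2)$ (the nonlinear term lying in $L^2(0,T;H^2)$ by the established $H^3$-bound and Sobolev embeddings, the boundary condition $\partial_\nu\Delta\phi=\partial_\nu\mu-F''(\phi)\partial_\nu\phi=0$ being automatic) upgrades $\phi$ to $L^2(0,T;H^4_N)$. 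Tracking the dependence of all constants on $\|\phi_0\|_{H^3}$, $\|\sigma_0\|_{H^1}$, $\|u\|_{L^2(0,T;L^2)}$, $\Omega$, $T$ through this chain yields \eqref{strhi}, and the continuity-in-time assertions follow from $\sigma\in C([0,T];H^1)$ and $\phi\in H^1(0,T;H^1)\hookrightarrow C([0,T];H^1)$.

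For part (3) I would write the system for the differences $\phi:=\phi_1-\phi_2$, $\mu:=\mu_1-\mu_2$, $\sigma:=\sigma_1-\sigma_2$ (with common initial data) and run energy estimates: test the $\phi$-difference equation with $\mu$ and with $A^{-1}$ of its zero-mean part, the identity $\mu=-\Delta\phi+F'(\phi_1)-F'(\phi_2)$ with $\phi_t$, and the $\sigma$-difference equation with $\sigma$ and $-\Delta\sigma$, then close via Gronwall. The nonlinear differences $P(\phi_1)(\sigma_1-\mu_1)-P(\phi_2)(\sigma_2-\mu_2)$ and $F'(\phi_1)-F'(\phi_2)$ are treated by mean-value/Lipschitz-on-bounded-sets arguments, legitimate precisely because \eqref{strhi} furnishes uniform bounds on $(\phi_i,\mu_i,\sigma_i)$ in the relevant norms; this delivers the stated Lipschitz dependence of $(\phi,\mu,\sigma)$ on $u$ in the strong topology.

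I expect the main obstacle to be the step producing $\mu\in L^\infty(0,T;H^1)$ (equivalently $\phi\in L^\infty(0,T;H^3)$): since mass is \emph{not} conserved here, one cannot pass to the zero-mean reduction available for the classical Cahn--Hilliard equation, and the coupling term $P(\phi)(\sigma-\mu)$ in \eqref{p1} feeds $\mu$ back into the dynamics of $\phi$, so the higher-order differential inequality has to be closed using the extra dissipation $\int_\Omega P(\phi)(\mu-\sigma)^2\dx$ from \eqref{BEL1} together with delicate Gagliardo--Nirenberg interpolation that exploits the subcritical growth exponents $q\le4$ and $r<6$ in dimension $d\le3$.
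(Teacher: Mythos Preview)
Your sketch is reasonable and, as you yourself note, follows the route of \cite{CGRS16}. The paper does not give its own proof of Proposition~\ref{strongexe}: it is stated there purely as a quotation of \cite[Theorems~2.1, 2.2]{CGRS16}, with no argument supplied. So there is nothing in the paper to compare against beyond that citation, and your outline is consistent with it.
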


%%%%%%%%%%%%%%%%%%%%%%%%%%%%%%%%%%%%%%%%%%%%%%%%%%%%%%%%%%%%%%%%%%%%%%%%%%%%
\section{Long-time Dynamics}
\label{main:long}
\setcounter{equation}{0}

In order to study the long-time behavior of problem \eqref{p1}--\eqref{ini}, we make the following additional assumptions.
\begin{itemize}
\item[\textbf{(P2)}] $P(s)>0$, for all $s\in\mathbb{R}$.
\item[\textbf{(F2)}] $F(s)$ is real analytic, for all $s\in \mathbb{R}$.
\item[\textbf{(U2)}] $u \in L^1(0,+\infty; L^2(\Omega))\cap L^2(0,+\infty; L^2(\Omega))$ and satisfies the decay condition
\begin{align}
\sup\limits_{t\geq0}(1+t)^{3+\rho}\|u(t)\|_{L^2(\Omega)}<+\infty,\quad\text{for some } \rho>0.\label{au}
\end{align}
\end{itemize}

The main result of this section reads as follows.

\begin{theorem}[Convergence to equilibrium] \label{longtime}
Assume that \textbf{(P1)}, \textbf{(P2)}, \textbf{(F1)}, \textbf{(F2)} and \textbf{(U2)} are satisfied.
For any $\phi_0\in H^1(\Omega)$ and $\sigma_0\in L^2(\Omega)$, problem \eqref{p1}--\eqref{ini} admits a unique global weak solution $(\phi, \mu, \sigma)$
such that
\begin{align}
\lim_{t\to+\infty} \left(\|\phi(t)-\phi_\infty\|_{H^2(\Omega)}+\|\sigma(t)-\sigma_\infty\|_{L^2(\Omega)}+\|\mu(t)-\mu_\infty\|_{L^2(\Omega)}\right)=0,
\end{align}
where $(\phi_\infty,  \mu_\infty, \sigma_\infty)$ satisfies
\begin{align}
\begin{cases}
&-\Delta \phi_\infty+F'(\phi_\infty)=\mu_\infty, \qquad\qquad \, \text{in}\ \Omega,\\
&\partial_\nu\phi_\infty=0,\qquad\qquad \qquad \qquad\qquad  \text{on}\ \partial\Omega,\\
& \displaystyle{\int_\Omega (\phi_\infty+\sigma_\infty) \dx = \int_\Omega (\phi_0+\sigma_0) \dx +\int_0^{+\infty}\!\!\!\int_\Omega u \dx\dt,}
\end{cases}\label{sta1}
\end{align}
with $\mu_\infty$ and $\sigma_\infty$ being two constants given by
\begin{align}
&\sigma_\infty=\mu_\infty,\label{sta2}\\
&\mu_\infty=|\Omega|^{-1}\int_\Omega F'(\phi_\infty) dx.\label{stamu1}
\end{align}
Moreover, the following estimates on convergence rate hold
\begin{align}
&\|\phi(t)-\phi_\infty\|_{H^1(\Omega)}+\|\sigma(t)-\sigma_\infty\|_{L^2(\Omega)}\leq C(1+t)^{-\min\left\{\frac{\theta}{1-2\theta},\, \frac{\rho}{2}\right\}},
\quad \forall\, t\geq 0,\label{ratehia}\\
&\|\mu(t)-\mu_\infty\|_{L^2(\Omega)}\leq  C(1+t)^{-\frac12\min\left\{\frac{\theta}{1-2\theta},\, \frac{\rho}{2}\right\}},
\quad \forall\, t\geq 0,\label{ratehib}
\end{align}
where $C>0$ is a constant depending on $\|\phi_0\|_{H^1(\Omega)}$, $\|\sigma_0\|_{L^2(\Omega)}$, $\|\phi_\infty\|_{H^1(\Omega)}$,
$\|u\|_{L^1(0,+\infty; L^2(\Omega))}$, $\|u\|_{L^2(0,+\infty; L^2(\Omega))}$ and $\Omega$; $\theta\in (0,\frac12)$ is a constant depending on $\phi_\infty$.
\end{theorem}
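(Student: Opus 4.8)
The plan is to combine the energy identity \eqref{BEL1} with a \L ojasiewicz--Simon type inequality adapted to the coupled structure of the system. I would organize the proof in four main stages.

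\textbf{Stage 1: Uniform-in-time estimates and precompactness of the trajectory.}
First I would use assumption \textbf{(U2)}, in particular $u\in L^1(0,+\infty;L^2(\Omega))\cap L^2(0,+\infty;L^2(\Omega))$, together with the energy identity \eqref{BEL1} and a Gronwall-type argument to control $\int_0^t\iOmega u\sigma\dx\dt$ via $\|u(\tau)\|_{L^2}\|\sigma(\tau)\|_{L^2}$, thereby showing $\mathcal{E}(\phi(t),\sigma(t))$ stays bounded and the dissipation integrals $\int_0^\infty(\|\nabla\mu\|^2+\|\nabla\sigma\|^2+\iOmega P(\phi)(\mu-\sigma)^2)\dt$ are finite. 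Coercivity of $\mathcal{E}$ from \textbf{(F1)} (the bound $F(s)\ge\alpha_5|s|-\alpha_6$ and the growth of $F_0''$) then gives $\phi\in L^\infty(0,+\infty;H^1)$ and $\sigma\in L^\infty(0,+\infty;L^2)$. Parabolic smoothing (as in \cite{FGR15}) on time intervals $[t,t+1]$ upgrades this to uniform bounds in $H^2$ for $\phi$ and hence precompactness of $\{\phi(t)\}$ in $H^1$ (even $H^{2-\epsilon}$). The crucial structural observation is the conserved total mass $\iOmega(\phi(t)+\sigma(t))\dx=\iOmega(\phi_0+\sigma_0)\dx+\int_0^t\iOmega u\,\dx\,\mathrm d\tau$, which has a finite limit $m_\infty$ as $t\to+\infty$ by \textbf{(U2)}; this identifies the last equation in \eqref{sta1}.

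\textbf{Stage 2: Characterization of the $\omega$-limit set.}
Using the finiteness of the dissipation, I would extract sequences $t_n\to+\infty$ along which $\nabla\mu(t_n)\to0$, $\nabla\sigma(t_n)\to0$ in $L^2$, and $\sqrt{P(\phi(t_n))}(\mu(t_n)-\sigma(t_n))\to0$. Since $\sigma(t_n)$ has a subsequential weak limit that is spatially constant, and $\mu(t_n)$ likewise (being close to $\sigma(t_n)$ via the $P(\phi)(\mu-\sigma)^2$ term and $P>0$ by \textbf{(P2)}), one gets $\mu_\infty=\sigma_\infty=$ const; taking the mean of \eqref{p2} and passing to the limit yields $\mu_\infty=|\Omega|^{-1}\iOmega F'(\phi_\infty)\dx$, which is \eqref{sta2}--\eqref{stamu1}. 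The limit $\phi_\infty\in H^2_N$ solves the stationary Cahn--Hilliard equation $-\Delta\phi_\infty+F'(\phi_\infty)=\mu_\infty$. The free energy $\mathcal{E}(\phi(t),\sigma(t))$ is nonincreasing up to the $\int u\sigma$ perturbation, which is integrable, hence $\mathcal{E}$ has a limit and is constant on the $\omega$-limit set.

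\textbf{Stage 3: The \L ojasiewicz--Simon inequality and convergence.}
This is the heart of the argument and the main obstacle. I would invoke the extended \L ojasiewicz--Simon inequality (Lemma \ref{LS2}, derived in the Appendix in the spirit of \cite{Zhang,WGZ07}) which, because the genuine mass $\iOmega\phi$ is not conserved here, must be formulated with respect to the reduced functional obtained after eliminating $\sigma$ (using $\sigma_\infty=\mu_\infty$ and the total mass constraint), giving an inequality of the form $|\mathcal{E}(\phi,\sigma)-\mathcal{E}(\phi_\infty,\sigma_\infty)|^{1-\theta}\le C\big(\|{-}\Delta\phi+F'(\phi)-\overline{(-\Delta\phi+F'(\phi))}\|_{(H^1)'}+\text{corrections}\big)$ for $\phi$ near $\phi_\infty$ in $H^1$, with $\theta\in(0,\tfrac12)$. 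Combining this with the energy identity (the dissipation bounds the right-hand side in terms of $\|\nabla\mu\|,\|\nabla\sigma\|$ and the reaction term, which also control the mass changing rates $\tfrac{\mathrm d}{\mathrm dt}\iOmega\phi$ and $\tfrac{\mathrm d}{\mathrm dt}\iOmega\sigma$), one sets up the standard differential inequality for $H(t):=\mathcal{E}(\phi(t),\sigma(t))-\mathcal{E}(\phi_\infty,\sigma_\infty)$, carefully absorbing the $\int u\sigma$ term using the polynomial decay \eqref{au} with exponent $3+\rho$. Integrating this inequality over $(t,+\infty)$ yields that $\phi_t\in L^1(0,+\infty;(H^1)')$ and $\sigma_t\in L^1(0,+\infty;(H^1)')$, hence $\phi(t)$ and $\sigma(t)$ converge (not just subsequentially) in $(H^1)'$; together with the precompactness from Stage 1 and a standard interpolation/uniqueness-of-limit-point argument, this gives convergence of $\phi(t)\to\phi_\infty$ in $H^1$, of $\sigma(t)\to\sigma_\infty$ in $L^2$, and then elliptic regularity on \eqref{p2} (with $\mu(t)\to\mu_\infty$ in $L^2$, which itself follows from $\|\nabla\mu(t)\|_{L^2}\to0$ plus convergence of means) upgrades $\phi(t)\to\phi_\infty$ to $H^2$.

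\textbf{Stage 4: Convergence rate.}
Finally, from the differential inequality for $H(t)$ and the established decay of the forcing, a Gronwall-type lemma (standard in the \L ojasiewicz--Simon literature, cf. \cite{WGZ07,Zhang}) gives $H(t)\le C(1+t)^{-\frac{2\theta}{1-2\theta}}$ up to the competing rate $(1+t)^{-\rho}$ coming from $u$, hence $H(t)^{1/2}\le C(1+t)^{-\min\{\theta/(1-2\theta),\,\rho/2\}}$; then integrating $\|\phi_t\|_{(H^1)'}+\|\sigma_t\|_{(H^1)'}\lesssim (-\dot H)^{1/2}+\|u\|_{L^2}$ from $t$ to $\infty$ and using the \L ojasiewicz inequality once more converts this into the stated $H^1\times L^2$ rate \eqref{ratehia}, and interpolating between the $H^2$ bound on $\phi-\phi_\infty$ (bounded) and the $(H^1)'$ decay — or directly estimating $\|\nabla\mu(t)\|_{L^2}$ from the equation — gives the half-rate \eqref{ratehib} for $\mu-\mu_\infty$. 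I expect the delicate bookkeeping of the forcing term $u$ throughout both the energy estimate and the final Gronwall argument, together with correctly setting up the \L ojasiewicz inequality for the non-mass-conserving reduced functional, to be where most of the real work lies.
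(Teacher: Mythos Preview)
Your overall strategy matches the paper's closely: uniform-in-time estimates via the energy identity and Gronwall, characterization of the $\omega$-limit set using \textbf{(P2)}, the \L ojasiewicz--Simon argument with an auxiliary monotone energy to absorb the forcing, and then rate estimates. The first three stages are essentially what the paper does (the paper packages the forcing into $\widetilde{\mathcal{E}}(t)=\mathcal{E}(\phi,\sigma)+Ky(t)+\int_t^\infty y$, with $y(t)=\int_t^\infty\|u\|_{L^2}$, and applies a Feireisl--Simondon lemma to conclude $\mathcal{D}\in L^1$, but this is equivalent to what you outline).

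There is, however, a genuine missing step in Stage~4. Integrating $\|\phi_t\|_{(H^1)'}$ from $t$ to $\infty$ gives you only the $(H^1(\Omega))'$ rate $\|\phi(t)-\phi_\infty\|_{(H^1)'}\le C(1+t)^{-\lambda}$ with $\lambda=\min\{\theta/(1-2\theta),\rho/2\}$. Neither ``using the \L ojasiewicz inequality once more'' nor interpolation against the uniform $H^3$ bound will upgrade this to the $H^1\times L^2$ rate \eqref{ratehia} with the \emph{same} exponent $\lambda$; interpolation would halve it. The paper closes this gap by a separate energy argument on the difference system: writing $(\hat\phi,\hat\sigma,\hat\mu)=(\phi-\phi_\infty,\sigma-\sigma_\infty,\mu-\mu_\infty)$, one tests the $\hat\phi$-equation by $\hat\mu$ and by $A^{-1}(\hat\phi-\overline{\hat\phi})$, the $\hat\sigma$-equation by $\hat\sigma$, and combines to obtain a functional $\mathcal{Y}(t)\sim\|\nabla\hat\phi\|_{L^2}^2+\|\hat\sigma\|_{L^2}^2$ (plus lower-order pieces controlled by the already-known $(H^1)'$ and mean-value rates) satisfying $\tfrac{d}{dt}\mathcal{Y}+c\,\mathcal{Y}\le C(1+t)^{-2\lambda}$. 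This ODE inequality yields $\mathcal{Y}(t)\le C(1+t)^{-2\lambda}$ and hence the sharp rate. The half-rate \eqref{ratehib} for $\mu$ then follows by writing $\hat\mu=-\Delta\hat\phi+F'(\phi)-F'(\phi_\infty)$ and interpolating $\|\Delta\hat\phi\|_{L^2}\le\|\nabla\Delta\hat\phi\|_{L^2}^{1/2}\|\nabla\hat\phi\|_{L^2}^{1/2}$.
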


The proof of Theorem~\ref{longtime} consists of several steps. We first derive some uniform-in-time a priori estimates on the solution $(\phi, \mu, \sigma)$.
Then we give a characterization on the $\omega$-limit set. Finally, we prove the convergence of the
trajectories and polynomial decay by means of a proper \L ojasiewicz-Simon inequality (see Lemma \ref{LS2}).

\subsection{Uniform-in-time estimates}
\label{unif}

We proceed to derive uniform-in-time estimates for the global weak solution to problem \eqref{p1}--\eqref{ini} under the additional assumption
\textbf{(U2)} for $u$.
\medskip

\textbf{First estimate.} Adding \eqref{w1} with \eqref{w2}, testing the resultant by $\xi=1$, after integration by parts, and then integrating with
respect to time, we have
\begin{align}
\int_\Omega (\phi(t)+\sigma(t)) \dx = \int_\Omega (\phi_0+\sigma_0) \dx +\int_0^t\int_\Omega u \dx\mathrm{d}\tau,\quad \forall\, t\geq 0,\label{masscon}
\end{align}
which together with \textbf{(U2)} implies the bound of total mass:
\begin{align}
\left|\int_\Omega (\phi(t)+\sigma(t)) \dx\right|\leq \left|\int_\Omega (\phi_0+\sigma_0) \dx\right| + \|u\|_{L^1(0,+\infty; L^1(\Omega))},\quad
\forall\, t\geq 0.
\label{massbd}
\end{align}
\medskip

\textbf{Second estimate.}
By the H\"older inequality and the Cauchy--Schwarz inequality, we infer that
 \begin{align}
 \left|\int_\Omega u\sigma \dx\right|\leq \|u\|_{L^2(\Omega)}\|\sigma\|_{L^2(\Omega)}\leq \frac12 \|u\|_{L^2(\Omega)}\|\sigma\|_{L^2(\Omega)}^2
 +\frac12\|u\|_{L^2(\Omega)}.
 \end{align}
From the energy identity \eqref{BEL1}, we have
 \begin{align}
&\mathcal{E}(\phi(t), \sigma(t))+\int_0^t(\|\nabla \mu\|_{L^2(\Omega)}^2+\|\nabla \sigma\|_{L^2(\Omega)}^2) \mathrm{d}\tau
+\int_0^t\int_\Omega P(\phi)(\mu-\sigma)^2 \dx\mathrm{d}\tau\nonumber\\
&\quad \leq \mathcal{E}(\phi_0, \sigma_0) +  \frac12 \int_0^t \|u\|_{L^2}\left( \|\sigma\|_{L^2}^2 +1\right) \dt,\quad  \forall\, t\geq 0.\label{lowes1}
\end{align}
Denote $$\widehat{\mathcal{E}}(t)= \widehat{\mathcal{E}}(\phi(t), \sigma(t))+\max\{\alpha_6, 1\},$$
where the constant $\alpha_6$ is given in \textbf{(F1)}.
From the classical Bellman--Gronwall inequality (cf. \cite{Bell}), it follows that
\begin{align}
\displaystyle \widehat{\mathcal{E}}(t)&\leq \widehat{\mathcal{E}}(0) e^{\int_0^t\|u(\tau)\|_{L^2}\mathrm{d}\tau}\nonumber\\
&\leq (\mathcal{E}(\phi_0, \sigma_0)+\max\{\alpha_6, 1\}) e^{\|u\|_{L^1(0,+\infty;L^2(\Omega))}},\quad\,
\forall\, t\geq 0,
\label{lowes2}
\end{align}
which further implies
\begin{align}
& \sup_{t\geq 0} \left(\|\phi(t)\|_{H^1(\Omega)}+\|\sigma(t)\|_{L^2(\Omega)}+\|F(\phi(t))\|_{L^1(\Omega)}\right)\leq C,\label{lowes3}\\
& \int_0^{+\infty}\!\!\!\left(\|\nabla \mu\|_{L^2(\Omega)}^2+\|\nabla \sigma\|_{L^2(\Omega)}^2+\int_\Omega P(\phi)(\mu-\sigma)^2\dx\right) \dt\leq C,
\label{lowes4}
\end{align}
where $C$ is a positive constant depending on $\|\phi_0\|_{H^1(\Omega)}$, $\|\sigma_0\|_{L^2(\Omega)}$, $\|u\|_{L^1(0,+\infty;L^2(\Omega))}$, $\Omega$ and
$\alpha_6$.
\medskip

\textbf{Third estimate.} Higher-order estimates will be carried out in a formal way, which can be justified rigorously by means of a suitable
approximation procedure (cf. \cite[Section 3]{FGR15}). Testing \eqref{w1} by $\xi=\mu_t$ and \eqref{w2} by $\xi=\sigma_t$, adding the resultants together,
we obtain (see \cite[(4.3)]{FGR15}):
\begin{align}
&\frac12\frac{\mathrm{d}}{\dt}\left(\|\nabla \mu\|_{L^2(\Omega)}^2+\|\nabla \sigma\|_{L^2(\Omega)}^2+\int_\Omega P(\phi)(\mu-\sigma)^2 \dx\right)
+\|\nabla \phi_t\|_{L^2(\Omega)}^2+\|\sigma_t\|_{L^2(\Omega)}^2\nonumber\\
&\quad =\frac12\int_\Omega P'(\phi)\phi_t(\mu-\sigma)^2 \dx -\int_\Omega F''(\phi)\phi_t^2 \dx+\int_\Omega u\sigma_t \dx.\nonumber\\
&\quad :=I_1+I_2+I_3.
\label{hi1}
\end{align}
As in \cite{FGR15}, we infer from the H\"older inequality that
\begin{align}
I_1&\leq \frac12\|P'(\phi)\|_{L^2(\Omega)}\|\phi_t\|_{L^6(\Omega)}\|\mu-\sigma\|^2_{L^6(\Omega)}\nonumber\\
&\leq C\|P'(\phi)\|_{L^2(\Omega)}\|\phi_t\|_{H^1(\Omega)}\|\mu-\sigma\|_{H^1(\Omega)}^2.\nonumber
\end{align}
Next, by the Poincar\'e-Wirtinger inequality, and using \eqref{p1}, we see that
\begin{align}
\|\phi_t\|_{H^1(\Omega)}
&\leq (1+c_\Omega)\|\nabla \phi_t\|_{L^2(\Omega)}+|\Omega|^\frac12|\overline{\phi_t}|\nonumber\\
&\leq (1+c_\Omega)\|\nabla \phi_t\|_{L^2(\Omega)}+|\Omega|^{-\frac12}\left|\int_\Omega P(\phi)(\mu-\sigma) \dx\right|\nonumber\\
&\leq (1+c_\Omega)\|\nabla \phi_t\|_{L^2(\Omega)}+|\Omega|^{-\frac12}\|\sqrt{P(\phi)}\|_{L^2(\Omega)}\|\sqrt{P(\phi)}(\mu-\sigma)\|_{L^2(\Omega)}.
\label{esphith1}
\end{align}
Besides,  from the Sobolev embedding theorem, \eqref{p2}, \eqref{lowes3} and  {\bf (F1)} it follows
\begin{align}
|\overline{\mu}|
&=|\Omega|^{-1}\left|\int_\Omega (-\Delta \phi+F'(\phi))\dx\right|\nonumber\\
&=|\Omega|^{-1}\left|\int_\Omega F'(\phi)dx\right|\nonumber\\
&\leq C (1+\|\phi\|_{L^{r-1}(\Omega)}^{r-1})\nonumber\\
&\leq C(1+\|\phi\|_{H^1(\Omega)}^{r-1})\leq C.
\label{mumean}
\end{align}
Thus, by the Poincar\'e-Wirtinger inequality again, we have
\begin{align}
\|\mu\|_{H^1(\Omega)}&\leq (1+c_\Omega)\|\nabla \mu\|_{L^2(\Omega)}+|\Omega |^\frac12|\overline{\mu}|\leq C(\|\nabla \mu\|_{L^2(\Omega)}+1).
\label{esmuh1}
\end{align}
Besides, we also notice that
\begin{align}
\|\sigma\|_{H^1(\Omega)}\leq \|\nabla \sigma\|_{L^2(\Omega)}+\|\sigma\|_{L^2(\Omega)}\leq \|\nabla \sigma\|_{L^2(\Omega)}+C.
\label{sigesh1}
\end{align}
Using the above estimates,  {\bf (P1)} and \eqref{lowes3}, we deduce that
\begin{align}
I_1&\leq C\|P'(\phi)\|_{L^2(\Omega)}\left[(1+c_\Omega)\|\nabla \phi_t\|_{L^2(\Omega)}
+|\Omega|^{-\frac12}\|\sqrt{P(\phi)}\|_{L^2(\Omega)}\|\sqrt{P(\phi)}(\mu-\sigma)\|_{L^2(\Omega)}\right]\nonumber\\
&\quad \times (\|\mu\|_{H^1(\Omega)}^2+\|\sigma\|_{H^1(\Omega)}^2)\nonumber\\
&\leq C\left(\|\nabla \phi_t\|_{L^2(\Omega)}+\|\sqrt{P(\phi)}(\mu-\sigma)\|_{L^2(\Omega)}\right)(\|\nabla \mu\|_{L^2(\Omega)}^2
+\|\nabla \sigma\|_{L^2(\Omega)}^2+1)\nonumber\\
&\leq \frac14 \|\nabla \phi_t\|_{L^2(\Omega)}^2+ C\left(\|\nabla \mu\|_{L^2(\Omega)}^4+\|\nabla \sigma\|_{L^2(\Omega)}^4
+ \|\sqrt{P(\phi)}(\mu-\sigma)\|_{L^2(\Omega)}^2+1\right).\nonumber
\end{align}
Next, using the fact $\int_\Omega \Delta \mu \dx=0$ (cf. \eqref{bc}), we have
$$\|\Delta \mu\|_{(H^1(\Omega))'}=\|\nabla \mu\|_{L^2(\Omega)}.$$
It follows from equation \eqref{p1} that
\begin{align}
\|-\phi_t+P(\phi)(\mu-\sigma)\|_{(H^1(\Omega))'}=\|\nabla\mu\|_{L^2(\Omega)}.\nonumber
\end{align}
Then on account of {\bf (P1)} and \eqref{lowes3}, we obtain
\begin{align}
\|\phi_t\|_{(H^1(\Omega))'}&\leq \|-\phi_t+P(\phi)(\mu-\sigma)\|_{(H^1(\Omega))'}+\|P(\phi)(\mu-\sigma)\|_{(H^1(\Omega))'}\nonumber\\
&\leq \|\nabla \mu\|_{L^2(\Omega)}+\|P(\phi)(\mu-\sigma)\|_{L^\frac65(\Omega)}\nonumber\\
&\leq \|\nabla \mu\|_{L^2(\Omega)}+\|\sqrt{P(\phi)}\|_{L^3(\Omega)}\|\sqrt{P(\phi)}(\mu-\sigma)\|_{L^2(\Omega)}\nonumber\\
&\leq \|\nabla \mu\|_{L^2(\Omega)}+C\left(1+\|\phi\|_{L^\frac{3q}{2}(\Omega)}^\frac{q}{2}\right)\|\sqrt{P(\phi)}(\mu-\sigma)\|_{L^2(\Omega)}\nonumber\\
&\leq \|\nabla \mu\|_{L^2(\Omega)}+C\|\sqrt{P(\phi)}(\mu-\sigma)\|_{L^2(\Omega)}.
\label{phittt}
\end{align}
Collecting \eqref{esphith1} and \eqref{phittt}, we get
\begin{align}
I_2&\leq \alpha_2\|\phi_t\|_{L^2(\Omega)}^2\nonumber\\
&\leq C\|\phi_t\|_{H^1(\Omega)}\|\phi_t\|_{(H^1(\Omega))'}\nonumber\\
&\leq C(\|\nabla \phi_t\|_{L^2(\Omega)}+\|\sqrt{P(\phi)}(\mu-\sigma)\|_{L^2(\Omega)})(\|\nabla \mu\|_{L^2(\Omega)}
+C\|\sqrt{P(\phi)}(\mu-\sigma)\|_{L^2(\Omega)})\nonumber\\
&\leq \frac14\|\nabla \phi_t\|_{L^2(\Omega)}^2+C \|\nabla \mu\|_{L^2(\Omega)}^2+\|\sqrt{P(\phi)}(\mu-\sigma)\|_{L^2(\Omega)}^2.\nonumber
\end{align}
Concerning $I_3$, it holds that
\begin{align}
I_3&\leq \|u\|_{L^2(\Omega)}\|\sigma_t\|_{L^2(\Omega)}\leq \frac12\|\sigma_t\|_{L^2(\Omega)}^2+\frac12\|u\|_{L^2(\Omega)}^2.\nonumber
\end{align}
Set
\begin{align}
\mathcal{A}(t):=\|\nabla \mu(t)\|_{L^2(\Omega)}^2+\|\nabla \sigma(t)\|_{L^2(\Omega)}^2+\|\sqrt{P(\phi(t))}(\mu(t)-\sigma(t))\|_{L^2(\Omega)}^2.\label{Y}
\end{align}
The estimate \eqref{lowes4} implies that
\begin{align}
\int_0^{+\infty} \mathcal{A}(t)dt<+\infty.\label{lowes4a}
\end{align}
From \eqref{hi1} and the above estimates for reminder terms $I_1,\ I_2,\ I_3$, we deduce that
\begin{align}
\frac{\mathrm{d}}{\dt}\mathcal{A}(t)+\|\nabla \phi_t\|_{L^2(\Omega)}^2+ \|\sigma_t\|_{L^2(\Omega)}^2\leq C\mathcal{A}(t)^2+\|u\|_{L^2(\Omega)}^2+C,\label{dYt}
\end{align}
where $C$ is a positive constant depending on $\|\phi_0\|_{H^1(\Omega)}$, $\|\sigma_0\|_{L^2(\Omega)}$, $\|u\|_{L^1(0,+\infty;L^2(\Omega))}$, $\Omega$,
$\alpha_1$, $\alpha_2$, $\alpha_4$ and $\alpha_6$.
Using \eqref{lowes4a} and the assumption $u\in L^2(0,+\infty; L^2(\Omega))$,
then  from \cite[Lemma 6.2.1]{Z04} we obtain, for any $\delta\in (0,1)$,
\begin{align}
 \mathcal{A}(t+\delta)\leq C(1+\delta^{-1}), \quad \forall\, t\geq 0,\label{esY}
\end{align}
where $C$ is a positive constant depending on the initial data $\|\phi_0\|_{H^1(\Omega)}$, $\|\sigma_0\|_{L^2(\Omega)}$, $\Omega$, $\alpha_1$, $\alpha_2$,
$\alpha_4$, $\alpha_6$, $\|u\|_{L^1(0,+\infty;L^2(\Omega))}$ and $\|u\|_{L^2(0,+\infty;L^2(\Omega))}$. Besides, by \cite[Lemma 6.2.1]{Z04}, we can also
conclude that
\begin{align}
\lim_{t\to +\infty} \mathcal{A}(t)=0.\label{convA}
\end{align}

\textbf{Fourth estimate.} Combining the estimates \eqref{mumean}--\eqref{sigesh1} and \eqref{esY}, we have
\begin{align}
\|\mu(t)\|_{H^1(\Omega)}\leq C, \qquad \forall\,t\geq \delta,\label{esmuh1a}\\
\|\sigma(t)\|_{H^1(\Omega)}\leq C,\qquad \forall\,t\geq \delta.\label{sigesh1a}
\end{align}
Testing \eqref{p2} by $-\Delta \phi$,  using \textbf{(F1)}, \eqref{lowes3} and \eqref{esmuh1a}, we see that
\begin{align}
\|\Delta \phi\|_{L^2(\Omega)}^2&=-\int_\Omega \Delta\phi \mu \dx+\int_\Omega F'(\phi)\Delta \phi \dx\nonumber\\
&\leq \frac12\|\mu\|_{L^2(\Omega)}^2+\frac12\|\Delta \phi\|_{L^2(\Omega)}^2-\int_\Omega F''(\phi)|\nabla \phi|^2 \dx\nonumber\\
&\leq \frac12\|\mu\|_{L^2(\Omega)}^2+\frac12\|\Delta \phi\|_{L^2(\Omega)}^2+\alpha_2\|\nabla \phi\|_{L^2(\Omega)}^2\nonumber\\
&\leq \frac12\|\Delta \phi\|_{L^2(\Omega)}^2+C.\nonumber
\end{align}
Then, by means of elliptic estimates, we obtain
\begin{align}
\|\phi(t)\|_{H^2(\Omega)}
&\leq C(\|\Delta \phi(t)\|_{L^2(\Omega)}+\|\phi(t)\|_{L^2(\Omega)})\leq C,\quad  \forall\,t\geq \delta.\nonumber%\label{esphih2}
\end{align}
The above estimate combined with \textbf{(F1)}, \eqref{lowes3}, \eqref{esmuh1a} and the Sobolev embedding theorem $H^2\hookrightarrow L^\infty$ further yields
that
\begin{align}
\|\phi(t)\|_{H^3(\Omega)}
&\leq C(\|\mu(t)\|_{H^1(\Omega)}+\|F'(\phi(t))\|_{H^1(\Omega)}+\|\phi(t)\|_{L^2(\Omega)})\nonumber\\
&\leq C+ C\left(1+\|\phi(t)\|_{L^{\infty}(\Omega)}^{r-1}\right)+C\left(1+\|\phi(t)\|_{L^\infty(\Omega)}^{r-2}\right)\|\nabla \phi(t)\|_{L^2(\Omega)}\nonumber\\
&\leq C,\quad  \forall\,t\geq \delta.\label{esphih3}
\end{align}

\textbf{Fifth estimate.} We deduce from \textbf{(U2)}, \eqref{lowes4}, \eqref{esphith1}, \eqref{esY} and the inequality \eqref{dYt} that
\begin{align}
\int_{t}^{t+1} \left(\|\phi_t(\tau)\|_{H^1(\Omega)}^2+\|\sigma_t(\tau)\|_{L^2(\Omega)}^2\right)\mathrm{d}\tau\leq C, \quad \forall\, t\geq \delta.
\label{phithi}
\end{align}
On the other hand, since
\begin{align}
\|P(\phi)(\sigma-\mu)\|_{L^2(\Omega)}
&\leq \|P(\phi)\|_{L^\infty(\Omega)}(\|\sigma\|_{L^2(\Omega)}+\|\mu\|_{L^2(\Omega)}),\nonumber
\end{align}
and
\begin{align}
\|\nabla(P(\phi)(\sigma-\mu))\|_{L^2(\Omega)}
&\leq \|P'(\phi)\|_{L^\infty(\Omega)}\|\nabla \phi\|_{L^\infty(\Omega)}\left(\|\sigma\|_{L^2(\Omega)}+\|\mu\|_{L^2(\Omega)}\right)\nonumber\\
&\quad +\|P(\phi)\|_{L^\infty(\Omega)}\left(\|\nabla \mu\|_{L^2(\Omega)}+\|\nabla \sigma\|_{L^2(\Omega)}\right),\nonumber
\end{align}
it follows from \eqref{esmuh1a}--\eqref{esphih3} that
\begin{align}
\int_{t}^{t+1} \|P(\phi(\tau))(\sigma(\tau)-\mu(\tau))\|_{H^1(\Omega)}^2 \mathrm{d}\tau\leq C,\quad \forall\, t\geq \delta.\label{Phi1}
\end{align}
Then, by comparison in the equations \eqref{p1} and \eqref{p3}, we infer from \eqref{phithi}, \eqref{Phi1}, \textbf{(U2)}  and the elliptic regularity theory that
\begin{align}
\int_{t}^{t+1} \left(\|\mu(\tau)\|_{H^3(\Omega)}^2+\|\sigma(\tau)\|_{H^2(\Omega)}^2\right)\mathrm{d}\tau\leq C, \quad \forall\, t\geq \delta.\label{L2mu3S2}
\end{align}
Applying the elliptic regularity theory to \eqref{p2} together with the above estimate and \eqref{esphih3}, we finally arrive at
\begin{align}
\int_{t}^{t+1} \|\phi(\tau)\|_{H^5(\Omega)}^2 \mathrm{d}\tau\leq C, \quad \forall\, t\geq \delta.
\end{align}

\begin{remark}\label{reg}
(1) Since $\delta\in (0,1)$ can be taken arbitrary small, the estimate \eqref{esY} actually implies that the global weak solution $(\phi, \sigma)$ becomes a
strong one once $t>0$.

(2) If we assume in addition $\phi_0\in H^2_N(\Omega)\cap H^3(\Omega)$, $\sigma_0\in H^1(\Omega)$,
then it easily follows that
\begin{align}
 \mathcal{A}(t)\leq C, \quad \forall\, t\geq 0,\label{esY1}
\end{align}
where $C$ is a positive constant depending on the initial data $\|\phi_0\|_{H^3(\Omega)}$, $\|\sigma_0\|_{H^1(\Omega)}$, $\Omega$, $\alpha_1$, $\alpha_2$,
$\alpha_4$, $\alpha_6$, $\|u\|_{L^1(0,+\infty;L^2(\Omega))}$ and $\|u\|_{L^2(0,+\infty;L^2(\Omega))}$.
In a similar fashion as above, we see that the previous higher-order estimates hold for all $t\geq 0$.

(3) It is worth mentioning that our higher-order estimates improve the estimate \eqref{strhi} that was obtained in \cite{CGRS16}, under the additional assumption
\textbf{(U2)}.
\end{remark}

\subsection{Proof of Theorem \ref{longtime}: convergence to a single equilibrium}
\label{omega}

For any initial datum $(\phi_0, \sigma_0)\in H^1(\Omega)\times L^2(\Omega)$, we define the corresponding $\omega$-limit set
\begin{align}
\omega(\phi_0, \sigma_0)=&\{(\phi_{\infty}, \sigma_\infty)\in (H^2_N(\Omega)\cap H^3(\Omega))\times H^1(\Omega)\ :\
\exists\, \{t_n\}\nearrow+\infty\ \text{such that}\nonumber\\
&\quad (\phi(t_n), \sigma(t_n))\rightarrow (\phi_{\infty}, \sigma_\infty)\ \text{in}\ H^2(\Omega)\times L^2(\Omega)\}.\nonumber
\end{align}
From the uniform-in-time estimates \eqref{lowes4}, \eqref{esmuh1a}--\eqref{esphih3}, and keeping the mass constraint \eqref{masscon}
in mind, one can easily adapt the argument in \cite[Section 6]{CGH15} to deduce the following characterization on $\omega(\phi_0, \sigma_0)$.

\bp\label{omeset}
Assume that \textbf{(P1)}, \textbf{(F1)}, \textbf{(U2)} are satisfied.
For any initial datum $(\phi_0, \sigma_0)\in H^1(\Omega)\times L^2(\Omega)$, the associated $\omega$-limit set $\omega(\phi_0, \sigma_0)$ is non-empty.
For any element $(\phi_\infty, \sigma_\infty)\in \omega(\phi_0, \sigma_0)$, $\sigma_\infty$ is a constant and $(\phi_\infty, \sigma_\infty)$ satisfies the
stationary problem \eqref{sta1}. Besides, $\mu_\infty$ is a constant given by \eqref{stamu1} and the following relation holds
\begin{align}
&P(\phi_\infty)(\sigma_\infty-\mu_\infty)=0, \quad \text{a.e. in}\ \Omega.
\label{steady}
\end{align}
\ep
In particular, the relation \eqref{steady} and the positivity assumption on $P(\cdot)$ (recall \textbf{(P2)}) immediately yield the following property.
\begin{corollary} \label{omeset2}
Under the assumptions of Proposition \ref{omeset}, if, in addition, \textbf{(P2)} is fulfilled,
then $(\phi_\infty, \mu_\infty, \sigma_\infty)$ satisfies the elliptic problem \eqref{sta1} with relations \eqref{sta2} and \eqref{stamu1}.
\end{corollary}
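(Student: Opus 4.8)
The plan is to read the corollary off almost immediately from Proposition~\ref{omeset}, the only extra ingredient being the strict positivity hypothesis \textbf{(P2)}. Indeed, Proposition~\ref{omeset} already guarantees, for any $(\phi_\infty,\sigma_\infty)\in\omega(\phi_0,\sigma_0)$, that $\sigma_\infty$ is a constant, that $(\phi_\infty,\sigma_\infty)$ solves the stationary problem \eqref{sta1}, that $\mu_\infty$ is the constant given by \eqref{stamu1}, and that the degenerate identity \eqref{steady}, namely $P(\phi_\infty)(\sigma_\infty-\mu_\infty)=0$ a.e.\ in $\Omega$, holds. Hence the only thing left to prove is the relation \eqref{sta2}, i.e.\ $\sigma_\infty=\mu_\infty$.

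To obtain \eqref{sta2} I would argue as follows. By the very definition of $\omega(\phi_0,\sigma_0)$ one has $\phi_\infty\in H^2_N(\Omega)\cap H^3(\Omega)$, so by the Sobolev embedding $H^3(\Omega)\hookrightarrow C(\overline{\Omega})$ (valid since $d\le 3$) the function $\phi_\infty$ is continuous and bounded on $\overline{\Omega}$. Since $P\in C^2(\mathbb{R})$ and $P(s)>0$ for every $s\in\mathbb{R}$ by \textbf{(P2)}, the quantity
\[
c_0:=\min_{x\in\overline{\Omega}}P(\phi_\infty(x))
\]
is strictly positive. Therefore \eqref{steady} forces $c_0\,|\sigma_\infty-\mu_\infty|\le |P(\phi_\infty)(\sigma_\infty-\mu_\infty)|=0$ a.e.\ in $\Omega$, whence $\sigma_\infty=\mu_\infty$ a.e.; since $\sigma_\infty$ and $\mu_\infty$ are already known to be constants, this is precisely \eqref{sta2}.

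Putting the pieces together, $(\phi_\infty,\mu_\infty,\sigma_\infty)$ satisfies \eqref{sta1} together with \eqref{sta2} and \eqref{stamu1}, which is the assertion. I do not expect any genuine obstacle here: the statement is essentially a one-line consequence of \eqref{steady} and \textbf{(P2)}, and the only point requiring a (routine) justification is that $P(\phi_\infty)$ is bounded away from zero, which is immediate once one invokes the regularity $\phi_\infty\in H^3(\Omega)$ provided by the characterization of the $\omega$-limit set in Proposition~\ref{omeset}.
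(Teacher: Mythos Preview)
Your proposal is correct and matches the paper's approach: the corollary is stated as an immediate consequence of \eqref{steady} together with the strict positivity assumption \textbf{(P2)}, and your argument simply spells out in detail why $P(\phi_\infty)>0$ forces $\sigma_\infty=\mu_\infty$. If anything, you give more justification than needed---pointwise positivity of $P(\phi_\infty(x))$ already forces the other factor in \eqref{steady} to vanish a.e., without invoking a uniform lower bound---but this is harmless.
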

\medskip

Next, given any initial datum $(\phi_0, \sigma_0)\in H^1(\Omega)\times L^2(\Omega)$ and source term $u$ satisfying \textbf{(U2)}, we denote by
\begin{align}
m_\infty:=|\Omega|^{-1}\left(\int_\Omega (\phi_0+\sigma_0) \dx +\int_0^{+\infty}\!\!\!\int_\Omega u \dx\dt\right)\label{m}
\end{align}
 the total mass at infinity time (cf. \eqref{masscon}). Then we are able to derive the following  gradient inequality of \L ojasiewicz-Simon type, which plays a crucial role in the
 study of long-time behavior of problem \eqref{p1}--\eqref{ini}.
\begin{lemma}[\L ojasiewicz-Simon Inequality]\label{LS2}
Let \textbf{(F1)}, \textbf{(F2)}, \textbf{(P1)}, \textbf{(P2)} and \textbf{(U2)} be satisfied.
Suppose that $(\phi_\infty, \mu_\infty, \sigma_\infty)$ is a solution
to the elliptic problem \eqref{sta1} together with \eqref{sta2}, \eqref{stamu1}  and $m_\infty$ is a constant given by \eqref{m}. Then there exist constants
    $\theta\in (0,\frac{1}{2})$ and $\beta>0$,
   depending on  $\phi_\infty$, $m_\infty$ and $\Omega$, such that for any  $(\phi,\sigma) \in H^2_N(\Omega)\times H^1(\Omega)$ satisfying
\begin{align}
   \label{neigh1}
   &\|\phi-\phi_\infty\|_{H^1(\Omega)}< \beta,\\
   %\quad
%    \|\sigma-\sigma_\infty\|_{H^1}< \beta,\\
   &\int_\Omega (\phi +\sigma) \dx + m_u |\Omega|=
     \int_\Omega (\phi_\infty + \sigma_\infty) \dx = m_\infty|\Omega|,\label{cons}
\end{align}
    where $m_u$ is a certain constant fulfiling $|m_u|\leq |\Omega|^{-\frac12}\|u\|_{L^1(0,+\infty;L^2(\Omega))}$, then we have
   \begin{align}
   &\left\| \mu-\overline{\mu} \right\|_{(H^1(\Omega))'}
   + C\| \nabla \sigma\|_{L^2(\Omega)}+C\| \sqrt{P(\phi)}(\mu-\sigma)\|_{L^2(\Omega)}+C|m_u|^\frac12\nonumber\\
   &\quad  \geq\
   | \mathcal{E}(\phi,\sigma) -\mathcal{E}(\phi_\infty,\sigma_\infty)|^{1-\theta}.\label{LSb}
  \end{align}
  Here, $\mu=-\Delta \phi +F'(\phi)$ and $C>0$ is a constant depending on $\Omega$, $\phi_\infty$, $m_\infty$, $\|\phi\|_{H^2(\Omega)}$, $\|\sigma\|_{H^1(\Omega)}$
  and $\|u\|_{L^1(0,+\infty;L^2(\Omega))}$.
\end{lemma}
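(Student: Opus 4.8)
The plan is to peel off the Cahn--Hilliard part of the energy, apply a \L ojasiewicz--Simon inequality to it, and treat the coupling with $\sigma$ together with the source tail $m_u$ as perturbations. Set $E_1(\phi):=\int_\Omega\left(\frac12|\nabla\phi|^2+F(\phi)\right)\dx$, so that $\mathcal{E}(\phi,\sigma)=E_1(\phi)+\frac12\|\sigma\|_{L^2(\Omega)}^2$. Using $\sigma^2-\sigma_\infty^2=2\sigma_\infty(\sigma-\sigma_\infty)+(\sigma-\sigma_\infty)^2$, the mass constraint \eqref{cons} --- which gives $\int_\Omega(\sigma-\sigma_\infty)\dx=-\int_\Omega(\phi-\phi_\infty)\dx-m_u|\Omega|$ --- and the relation $\sigma_\infty=\mu_\infty$ from \eqref{sta2}, one obtains the algebraic identity
\begin{align*}
\mathcal{E}(\phi,\sigma)-\mathcal{E}(\phi_\infty,\sigma_\infty)=\left[\mathcal{L}(\phi)-\mathcal{L}(\phi_\infty)\right]+\frac12\|\sigma-\sigma_\infty\|_{L^2(\Omega)}^2-\sigma_\infty m_u|\Omega|,
\end{align*}
where $\mathcal{L}(\psi):=E_1(\psi)-\mu_\infty\int_\Omega\psi\,\dx$ is the Lagrangian associated with the mass constraint. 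By \eqref{sta1} and \eqref{stamu1}, $\phi_\infty$ is a critical point of the analytic functional $\mathcal{L}$ on $H^1(\Omega)$, and the problem reduces to estimating the three terms on the right.

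For the first term I would invoke the extended \L ojasiewicz--Simon inequality derived in the Appendix. Since $F$ is real analytic by \textbf{(F2)}, $\mathcal{L}$ is analytic near $\phi_\infty$ in $H^1(\Omega)$ and its second variation $-\Delta+F''(\phi_\infty)$ is a Fredholm operator of index zero from $H^1(\Omega)$ into $(H^1(\Omega))'$; hence there exist $\theta\in(0,\frac12)$ and $\beta,c_0>0$, depending only on $\phi_\infty$, $m_\infty$ and $\Omega$, such that for every $\phi$ with $\|\phi-\phi_\infty\|_{H^1(\Omega)}<\beta$ (this $\beta$ being the one in the statement),
\begin{align*}
\|\mu-\mu_\infty\|_{(H^1(\Omega))'}\geq c_0\,|\mathcal{L}(\phi)-\mathcal{L}(\phi_\infty)|^{1-\theta},\qquad \mu=-\Delta\phi+F'(\phi).
\end{align*}
Since $\mu-\overline{\mu}$ has zero mean and $\mu_\infty$ is a constant, the splitting of the $(H^1(\Omega))'$ norm into its mean-zero and mean parts gives $\|\mu-\mu_\infty\|_{(H^1)'}^2=\|\mu-\overline{\mu}\|_{(H^1)'}^2+|\overline{\mu}-\mu_\infty|^2$, whence
\begin{align*}
\|\mu-\overline{\mu}\|_{(H^1(\Omega))'}+|\overline{\mu}-\mu_\infty|\geq c_0\,|\mathcal{L}(\phi)-\mathcal{L}(\phi_\infty)|^{1-\theta}.
\end{align*}

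It remains to dominate the scalar quantities by the dissipative terms on the left-hand side of \eqref{LSb}. Since $\phi\in H^2_N(\Omega)\hookrightarrow L^\infty(\Omega)$ and $P>0$ by \textbf{(P2)}, $P(\phi)$ is bounded from below by a positive constant depending on $\|\phi\|_{H^2(\Omega)}$, so that $|\overline{\mu}-\overline{\sigma}|\leq|\Omega|^{-1/2}\|\mu-\sigma\|_{L^2(\Omega)}\leq C\,\|\sqrt{P(\phi)}(\mu-\sigma)\|_{L^2(\Omega)}$; moreover $\overline{\mu}=\overline{F'(\phi)}$ because $\int_\Omega\Delta\phi\,\dx=0$, and $\overline{\sigma}-\sigma_\infty=-(\overline{\phi}-\overline{\phi_\infty})-m_u$ by \eqref{cons}. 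Likewise the Poincar\'e--Wirtinger inequality gives $\|\sigma-\sigma_\infty\|_{L^2(\Omega)}\leq C\|\nabla\sigma\|_{L^2(\Omega)}+|\Omega|^{1/2}\big(|\overline{\phi}-\overline{\phi_\infty}|+|m_u|\big)$. Taking absolute values in the identity of the first paragraph, raising to the power $1-\theta$, and using $(a+b+c)^{1-\theta}\leq a^{1-\theta}+b^{1-\theta}+c^{1-\theta}$, one exploits that $2(1-\theta)>1$ (so the quadratic contribution of $\|\nabla\sigma\|_{L^2}$ is dominated by its linear one on bounded sets, the excess power being absorbed into a constant that may depend on $\|\sigma\|_{H^1(\Omega)}$) and that $1-\theta\geq\frac12$ (so $|m_u|^{1-\theta}$ and $|m_u|^{2(1-\theta)}$ are dominated by $C|m_u|^{1/2}$, using $|m_u|\leq|\Omega|^{-1/2}\|u\|_{L^1(0,+\infty;L^2(\Omega))}$); in this way every contribution gets absorbed into the right-hand side of \eqref{LSb} --- except for those involving the mean drift $|\overline{\phi}-\overline{\phi_\infty}|$.

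Controlling this mean drift (equivalently $|\overline{\mu}-\mu_\infty|=|\overline{F'(\phi)}-\overline{F'(\phi_\infty)}|$, or $|\overline{\sigma}-\sigma_\infty|$) is the main obstacle: the mass $\int_\Omega\phi\,\dx$ is \emph{not} conserved for problem \eqref{p1}--\eqref{ini}, so $\overline{\phi}$ genuinely differs from $\overline{\phi_\infty}$, and this difference is invisible to the gradient-type norms $\|\mu-\overline{\mu}\|_{(H^1)'}$ and $\|\nabla\sigma\|_{L^2}$. The decisive structural fact is that the \emph{total} mass $\int_\Omega(\phi+\sigma)\,\dx$ \emph{is} controlled, thanks to the exact cancellation between the reaction terms in \eqref{p1} and \eqref{p3} encoded in \eqref{cons}; combined with $|\overline{\mu}-\overline{\sigma}|\leq C\|\sqrt{P(\phi)}(\mu-\sigma)\|_{L^2}$ this ties the drift to the extra reaction dissipation and to $m_u$ up to a residual of the form $|\overline{F'(\phi)}-\mu_\infty|$. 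That residual is in turn reabsorbed into $c_0|\mathcal{L}(\phi)-\mathcal{L}(\phi_\infty)|^{1-\theta}$ via the refined structure of the linearization at $\phi_\infty$: every $v\in\ker(-\Delta+F''(\phi_\infty))$ with $\partial_\nu v=0$ satisfies $\overline{F''(\phi_\infty)v}=\overline{\Delta v}=0$, so the functional $\psi\mapsto\overline{F'(\psi)}$ is transverse to the kernel, which makes the drift a genuinely lower-order quantity relative to the \L ojasiewicz gradient provided $\|\phi-\phi_\infty\|_{H^1(\Omega)}<\beta$ is small enough --- and this is precisely the content that the extended inequality in the Appendix is built to supply. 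I expect this last absorption to be the only delicate point; the remaining estimates are routine applications of the Poincar\'e and Young inequalities.
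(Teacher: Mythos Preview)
Your decomposition via the linear Lagrangian $\mathcal{L}(\psi)=E_1(\psi)-\mu_\infty\int_\Omega\psi\,\dx$ is \emph{not} the route the paper takes, and the difference is exactly where your argument stalls. The paper works instead with the quadratically penalized functional
\[
\Upsilon(\phi)=\int_\Omega\Big(\tfrac12|\nabla\phi|^2+F(\phi)\Big)\dx+\tfrac12|\Omega|\Big(m_\infty-|\Omega|^{-1}\!\int_\Omega\phi\,\dx\Big)^{2},
\]
for which $\phi_\infty$ is again a critical point and a \L ojasiewicz--Simon inequality (Lemma~\ref{LS1}, quoted from \cite{WGZ07}) holds. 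The point of the quadratic penalty is that the gradient of $\Upsilon$ equals $\mu-(m_\infty-\overline{\phi})$; by the constraint \eqref{cons} one has $m_\infty-\overline{\phi}=\overline{\sigma}+m_u$, so the \emph{mean} of this gradient is $\overline{\mu}-\overline{\sigma}-m_u$, which is directly bounded by $C\|\sqrt{P(\phi)}(\mu-\sigma)\|_{L^2}+|m_u|$. Likewise $\mathcal{E}(\phi,\sigma)-\Upsilon(\phi)=\tfrac12\|\sigma\|_{L^2}^2-\tfrac12|\Omega|(\overline{\sigma}+m_u)^2$ depends only on $\sigma-\overline{\sigma}$ and $m_u$, hence is controlled by $\|\nabla\sigma\|_{L^2}+|m_u|^{1/2}$. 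The mean drift $|\overline{\phi}-\overline{\phi_\infty}|$ never appears.

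By contrast, the gradient of your $\mathcal{L}$ is $\mu-\mu_\infty$, with mean $\overline{\mu}-\mu_\infty=\overline{F'(\phi)}-\overline{F'(\phi_\infty)}$, and this quantity is \emph{not} dominated by $\|\mu-\overline{\mu}\|_{(H^1)'}$, $\|\nabla\sigma\|_{L^2}$, $\|\sqrt{P(\phi)}(\mu-\sigma)\|_{L^2}$ and $|m_u|$ alone. Your transversality observation that $\overline{F''(\phi_\infty)v}=0$ for $v\in\ker(-\Delta+F''(\phi_\infty))$ kills only the kernel component of the linearized drift; the component along the orthogonal complement contributes a term of size $\|w_\perp\|$ that is of the \emph{same} order as the gradient, not lower order, so there is no absorption and the argument does not close. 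What you call ``the content the extended inequality in the Appendix is built to supply'' is in fact the $\Upsilon$-based inequality above, not a refinement of the \L ojasiewicz inequality for $\mathcal{L}$ that controls the mean. Replacing $\mathcal{L}$ by $\Upsilon$ removes the obstacle entirely and makes the remaining steps you outline (Poincar\'e, $(a+b)^{1-\theta}\le a^{1-\theta}+b^{1-\theta}$, $2(1-\theta)>1$, $1-\theta\ge\tfrac12$) go through exactly as you describe.
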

\begin{remark} The proof of Lemma \ref{LS2} will be postponed to the Appendix.
 The constant $m_u$ in \eqref{cons} can be viewed as a control on the deviation of $|\Omega|^{-1}\int_\Omega(\phi+\sigma)\dx$ from the mean value of the total
 mass at infinity time $m_\infty$ (see \eqref{muu} below for its precise definition, when it is connected with the evolution problem \eqref{p1}--\eqref{ini}).
\end{remark}

After the previous preparations, we are in a position to complete the proof of Theorem \ref{longtime}. \medskip

\textbf{Step 1. Convergence of the free energy  $\mathcal{E}(\phi(t), \sigma(t))$.}
Let $(\phi(t), \sigma(t))$ be the global weak solution to problem \eqref{p1}--\eqref{ini} subject to the initial datum
$(\phi_0, \sigma_0)\in H^1(\Omega)\times L^2(\Omega)$ and the source term $u$ satisfying $\textbf{(U2)}$. Thanks to Remark \ref{reg},  the global weak solution
$(\phi(t), \sigma(t))$ becomes a strong one as $t\geq \delta>0$ and is uniformly bounded in $H^3(\Omega)\times H^1(\Omega)$ for all $t\geq \delta$.
Then by the energy identity \eqref{BEL}, Young's inequality and \eqref{lowes3}, we are able to obtain the following energy inequality, for a.e. $t\geq \delta$,
\begin{align}
\frac{\mathrm{d}}{\dt} \mathcal{E}(\phi, \sigma)+\|\nabla \mu\|_{L^2(\Omega)}^2+\|\nabla \sigma\|_{L^2(\Omega)}^2+\int_\Omega P(\phi)(\mu-\sigma)^2 \dx
\leq K\|u\|_{L^2(\Omega)},\label{BELL}
\end{align}
where $K>0$ is a constant depending on $\|\phi_0\|_{H^1(\Omega)}$, $\|\sigma_0\|_{L^2(\Omega)}$, $\|u\|_{L^1(0,+\infty;L^2(\Omega))}$, $\Omega$ and $\alpha_6$.
Denote
\begin{align}
y(t)=\int_t^{+\infty}\|u(\tau)\|_{L^2(\Omega)}\mathrm{d}\tau.\label{ytt}
\end{align}
By assumption \textbf{(U2)}, there exist some positive constants $C_1$, $C_2$ such that
\begin{align}
 y(t)\leq C_1(1+t)^{-2-\rho}\quad \text{and}\quad \int_t^{+\infty} y(\tau) \mathrm{d}\tau \leq C_2(1+t)^{-1-\rho},\quad \forall\, t\geq 0.\label{yyy}
\end{align}
Let us introduce the auxiliary energy functional
 \begin{align}
 \widetilde{\mathcal{E}}(t)=\mathcal{E}(\phi(t), \sigma(t))+Ky(t)+\int_t^{+\infty}y(\tau)\mathrm{d}\tau. \label{AuE}
 \end{align}
It follows from \eqref{BELL} that,  for a.e. $t\geq \delta$,
\begin{align}
\label{BELb}
\frac{\mathrm{d}}{\dt}\widetilde{\mathcal{E}}(t)+\mathcal{D}(t)^2\leq 0,
\end{align}
where
\begin{align}\nonumber
\mathcal{D}(t)^2=\|\nabla \mu(t)\|_{L^2(\Omega)}^2+\|\nabla \sigma(t)\|_{L^2(\Omega)}^2+\int_\Omega P(\phi(t))(\mu(t)-\sigma(t))^2 \dx + y(t).
\end{align}
 Hence, $\widetilde{\mathcal{E}}(t)$ is non-increasing in $t$. Since $\widetilde{\mathcal{E}}(t)$ is also bounded from below by its definition \eqref{AuE},
 we deduce that there exists a constant
 $\mathcal{E}_\infty$ such that
 \begin{align}
 \lim_{t\rightarrow+\infty}\widetilde{\mathcal{E}}(t)= \mathcal{E}_{\infty},\label{limEa}
 \end{align}
 which together with the decay property \eqref{yyy} yields
 \begin{align}
 \lim_{t\rightarrow+\infty}\mathcal{E}(\phi(t), \sigma(t))= \mathcal{E}_{\infty}.\label{limE}
 \end{align}
 Recalling the definition of $\omega(\phi_0, \sigma_0)$, it is easy to see that
  $\mathcal{E}(\phi(t), \sigma(t))$ equals to the constant $\mathcal{E}_{\infty}$ on the set $\omega(\phi_0, \sigma_0)$.
  \medskip

  \textbf{Step 2. Convergence of the trajectory $(\phi(t), \sigma(t))$.} We proceed to prove that $\omega(\phi_0, \sigma_0)$ is actually a singleton.
  Since the constant limit $\sigma_\infty$ can be uniquely determined by $\phi_\infty$ via the mass constraint in \eqref{sta1} and $\mu_\infty$ can be
  uniquely determined by $\phi_\infty$ via \eqref{stamu1}, we only need to prove the uniqueness of asymptotic limit of the phase function $\phi(t)$
  as $t\to+\infty$.

  The proof of convergence of $\phi(t)$ follows from the so-called \L ojasiewicz--Simon approach \cite{S83}
  (see e.g., \cite{CJ, FS, HT01, GWZ06, RH99} for its various aplications). To this end, we set
\begin{align}
m_u(t)=|\Omega|^{-1}\int_t^{+\infty}\!\!\! \int_\Omega u(\tau) \dx \mathrm{d}\tau,\quad \forall\, t\geq 0.\label{muu}
\end{align}
Then it follows that
\begin{align}
|m_u(t)|\leq |\Omega|^{-\frac12}\int_t^{+\infty}\|u(\tau)\|_{L^2(\Omega)}\mathrm{d}\tau =|\Omega|^{-\frac12}y(t),
\label{mutes}
\end{align}
where we recall \eqref{ytt} for the definition of $y(t)$.

For every element $(\phi_\infty, \sigma_\infty)\in \omega(\phi_0,\sigma_0)$, by Lemma \ref{LS2} (with $m_u(t)$ given by \eqref{muu} and allowing
  $t\in [0,+\infty)$ in $m_u(t)$), there exist constants $\theta_{\phi_\infty}\in(0,\frac{1}{2})$ and $\beta_{\phi_\infty}>0$ such that the inequality
  \eqref{LSb} holds for $(\phi, \sigma)\in H^2_N(\Omega)\times H^1(\Omega)$ satisfying \eqref{cons} and
\begin{align}
 \phi\in \textbf{B}_{\beta_{\phi_\infty}}(\phi_\infty):=\Big\{\phi\in H^2_N(\Omega): \|\phi-\phi_\infty\|_{H^1(\Omega)}<\beta_{\phi_\infty}\Big\}.\nonumber
\end{align}
The union of balls $\{\textbf{B}_{\beta_{\phi_\infty}}(\phi_\infty): (\phi_\infty, \sigma_\infty) \in \omega(\phi_0,\sigma_0)\}$ forms an open cover of
the set $\Phi:=\{\phi_\infty: (\phi_\infty, \sigma_\infty)\in \omega(\phi_0, \sigma_0)\}$.
Recalling \eqref{sigesh1a} and \eqref{esphih3}, $\omega(\phi_0, \sigma_0)$ is compact in $H^2(\Omega)\times L^2(\Omega)$, thus we can find a finite
sub-cover $\{\textbf{B}_{\beta_i}(\phi_\infty^i):i=1,2,...,m\}$ of $\Phi$ (in the topology of $H^1(\Omega)$), where the constants $\beta_i, \theta_i$
corresponding to $\phi_\infty^i$ in Lemma \ref{LS2} are indexed by $i$. From the definition of $\omega(\phi_0,\sigma_0)$, there exists a sufficient
large time $t_0>\delta$ such that
\begin{align}
\nonumber \phi(t)\in\mathcal{U}:=\bigcup_{i=1}^m\textbf{B}_{\beta_i}(\phi^{i}_\infty), \quad \text{for}\;\; t\geq t_0.
\end{align}
Taking $\theta=\min_{i=1}^m\{\theta_i\}\in(0,\frac{1}{2})$, using Lemma \ref{LS2}, the convergence of energy \eqref{limE} and the estimate \eqref{mutes},
we deduce that, for all $t\geq t_0$, the global weak solution $(\phi(t), \sigma(t))$ satisfies
\begin{align}
   &\left\| \mu(t)-\overline{\mu}(t) \right\|_{(H^1(\Omega))'}
   + C\| \nabla \sigma(t)\|_{L^2(\Omega)} + C\|\sqrt{P(\phi(t))}(\mu(t)-\sigma(t))\|_{L^2(\Omega)}+Cy(t)^\frac12\nonumber\\
   &\quad  \geq\
   | \mathcal{E}(\phi(t),\sigma(t)) -\mathcal{E}_\infty|^{1-\theta},\nonumber
  \end{align}
which together with Poincar\'e's inequality and \eqref{esmuh1a} yields
\begin{align}
   & C\| \nabla \mu(t) \|_{L^2(\Omega)}
   + C\| \nabla \sigma(t)\|_{L^2(\Omega)} + C\|\sqrt{P(\phi(t))}(\mu(t)-\sigma(t))\|_{L^2(\Omega)}+Cy(t)^\frac12\nonumber\\
   &\quad  \geq\
   | \mathcal{E}(\phi(t),\sigma(t)) -\mathcal{E}_\infty|^{1-\theta}. \label{LSc}
  \end{align}
It follows from \eqref{yyy}--\eqref{BELb} that
\begin{align}
 \mathcal{E}(\phi(t), \sigma(t))-\mathcal{E}_{\infty}
 &\geq \int_{t}^{\infty}\mathcal{D}(\tau)^2\mathrm{d}\tau-Ky(t)-\int_t^{+\infty}y(\tau)\mathrm{d}\tau\nonumber\\
&\geq\int_t^{\infty}\mathcal{D}(\tau)^2 \mathrm{d}\tau-C(1+t)^{-(1+\rho)}.\label{eee0}
\end{align}
On the other hand, set
\begin{align}\nonumber
\zeta=\min\left\{\theta,\frac{\rho}{2(1+\rho)}\right\}\in(0,\frac{1}{2}).
\end{align}
Then on account of \eqref{LSc} and the uniform estimates \eqref{esmuh1a}--\eqref{esphih3}, we get
\begin{align}
&|\mathcal{E}(\phi(t), \sigma(t))-\mathcal{E}_{\infty}|\nonumber\\
&\leq\left[C\| \nabla \mu(t) \|_{L^2(\Omega)}
   + C\| \nabla \sigma(t)\|_{L^2(\Omega)} + C\|\sqrt{P(\phi(t))}(\mu(t)-\sigma(t))\|_{L^2(\Omega)} +Cy(t)^\frac12\right]^{\frac{1}{1-\theta}}\nonumber\\
&\leq C \left[\| \nabla \mu(t) \|_{L^2(\Omega)}
   + \| \nabla \sigma(t)\|_{L^2(\Omega)} + \|\sqrt{P(\phi(t))}(\mu(t)-\sigma(t))\|_{L^2(\Omega)}+y(t)^\frac12\right]^{\frac{1}{1-\zeta}}\nonumber\\
&\leq C\mathcal{D}(t)^{\frac{1}{1-\zeta}},\quad \forall\, t\geq t_0.
\label{ee}
\end{align}
It is also easy to verify that
\begin{align}
\int_t^{\infty}(1+\tau)^{-2(1+\rho)(1-\zeta)}\mathrm{d}\tau\leq\int_t^{\infty}(1+\tau)^{-(2+\rho)}\mathrm{d}\tau\leq (1+t)^{-(1+\rho)},
\quad \forall\, t\geq t_0.\label{eee1}
\end{align}
Hence, denote
\begin{align}\nonumber
\mathcal{Z}(t)=\mathcal{D}(t)+(1+t)^{-(1+\rho)(1-\zeta)}.
\end{align}
We infer from \eqref{eee0}--\eqref{eee1} that
\begin{align}
\int_t^{\infty}\mathcal{Z}(\tau)^2 \mathrm{d}\tau &\leq C\mathcal{D}(t)^{\frac{1}{1-\zeta}}+C(1+t)^{-(1+\rho)}
\leq  C\mathcal{Z}(t)^{\frac{1}{1-\zeta}},\quad\forall\, t\geq t_0.\label{z}
\end{align}
Recall the following lemma (see \cite{FS, HT01})
\begin{lemma}\label{f}
Let $\zeta\in(0,\frac{1}{2})$. Assume that $Z\geq0$ be a measurable function on $(\tau,+\infty)$, $Z\in L^2(\tau, +\infty)$ and there exist $C>0$ and
$t_0\geq\tau$ such that
\begin{align}
\nonumber \int_t^{\infty} Z(s)^2\mathrm{d}s\leq CZ(t)^{\frac{1}{1-\zeta}},\quad\text{for a.e.   }t\geq t_0.
\end{align}
Then $Z\in L^1(t_0,+\infty)$.
\end{lemma}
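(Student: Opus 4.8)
The plan is to track the tail of the squared function. Set $\Phi(t):=\int_t^{+\infty} Z(s)^2\,\mathrm{d}s$, which is well defined, finite, non-increasing and absolutely continuous on $[\tau,+\infty)$ precisely because $Z\in L^2(\tau,+\infty)$, with $\Phi'(t)=-Z(t)^2$ for a.e.\ $t$. If $\Phi(t_1)=0$ for some $t_1\ge t_0$, then $Z=0$ a.e.\ on $[t_1,+\infty)$ and the conclusion is immediate (on the bounded interval $[t_0,t_1]$ one has $Z\in L^2\subset L^1$), so I may assume $\Phi>0$ on $[t_0,+\infty)$. The assumed inequality $\Phi(t)\le C\,Z(t)^{1/(1-\zeta)}$ can then be rephrased as $Z(t)^2\ge C^{-2(1-\zeta)}\Phi(t)^{2(1-\zeta)}$, i.e.
\[
-\Phi'(t)\ \ge\ C^{-2(1-\zeta)}\,\Phi(t)^{\alpha},\qquad \alpha:=2(1-\zeta),\quad\text{for a.e.\ }t\ge t_0 .
\]
Since $\zeta\in(0,\tfrac12)$ we have $\alpha\in(1,2)$; this is where the upper bound on $\zeta$ is used.

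Next I would integrate this differential inequality. Using $\Phi>0$ and $\alpha>1$ one computes $\frac{\mathrm d}{\mathrm dt}\big(\Phi^{1-\alpha}\big)=(1-\alpha)\Phi^{-\alpha}\Phi'\ge (\alpha-1)C^{-2(1-\zeta)}$ a.e.\ on $[t_0,+\infty)$; integrating from $t_0$ to $t$ and discarding the non-negative term $\Phi(t_0)^{1-\alpha}$ gives $\Phi(t)^{1-\alpha}\ge (\alpha-1)C^{-2(1-\zeta)}(t-t_0)$, and since $1-\alpha<0$ this yields
\[
\Phi(t)\ \le\ C_1\,(t-t_0)^{-\frac{1}{\alpha-1}}\ =\ C_1\,(t-t_0)^{-\frac{1}{1-2\zeta}},\qquad t>t_0 ,
\]
for a suitable $C_1>0$. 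In particular $\Phi(t)\le C_1'\,t^{-1/(1-2\zeta)}$ for all $t\ge 2t_0$, with $C_1'$ depending on $C_1$ and $t_0$.

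Finally, I would convert this polynomial decay of $\Phi$ into integrability of $Z$ by a dyadic Cauchy--Schwarz argument. For $R\ge 2t_0$,
\[
\int_R^{2R} Z(s)\,\mathrm d s\ \le\ R^{1/2}\Big(\int_R^{2R} Z(s)^2\,\mathrm d s\Big)^{1/2}\ \le\ R^{1/2}\,\Phi(R)^{1/2}\ \le\ (C_1')^{1/2}\,R^{\,\frac12-\frac{1}{2(1-2\zeta)}}\ =\ (C_1')^{1/2}\,R^{-\frac{\zeta}{1-2\zeta}} .
\]
Because $\zeta>0$ (this is where the lower bound on $\zeta$ enters), the exponent $-\zeta/(1-2\zeta)$ is strictly negative, so summing over the dyadic scales $R=2^{k}t_0$, $k\ge 1$, produces a convergent geometric series and hence $\int_{2t_0}^{+\infty} Z(s)\,\mathrm d s<+\infty$; combined with $\int_{t_0}^{2t_0} Z\le t_0^{1/2}\,\Phi(t_0)^{1/2}<+\infty$ this gives $Z\in L^1(t_0,+\infty)$.

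The only slightly delicate point, and the one I would be most careful about, is the rigorous handling of the first step: one must use that $Z^2\in L^1$ to know $\Phi$ is absolutely continuous with $\Phi'=-Z^2$ a.e., dispose of the possible vanishing of $\Phi$ as above, and only then differentiate $\Phi^{1-\alpha}$ and integrate. Everything after that is bookkeeping of exponents, and the two conditions $\zeta>0$ and $\zeta<\tfrac12$ are used exactly to make, respectively, the final geometric series converge and the exponent $\alpha$ in the differential inequality exceed $1$.
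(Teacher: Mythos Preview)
Your proof is correct and is essentially the standard argument for this classical lemma. The paper itself does not prove the statement; it simply recalls it with a reference to \cite{FS,HT01}, where exactly the strategy you outline appears: introduce the tail $\Phi(t)=\int_t^\infty Z^2$, derive the differential inequality $-\Phi'\ge c\,\Phi^{2(1-\zeta)}$, integrate to obtain the polynomial decay $\Phi(t)\le C(t-t_0)^{-1/(1-2\zeta)}$, and then use a dyadic Cauchy--Schwarz summation to conclude $Z\in L^1$. Your handling of the absolute continuity of $\Phi$ and of the possible vanishing of $\Phi$ is careful, and your identification of where each endpoint of the constraint $\zeta\in(0,\tfrac12)$ is used is accurate.
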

\noindent Then from \eqref{z} and Lemma \ref{f},  we can deduce that
\begin{align}
\label{z1}\int_{t_0}^{+\infty}\mathcal{Z}(t)\dt<+\infty,
\end{align}
which together with \eqref{phittt} yields
\begin{align}
\nonumber\int_{t_0}^{+\infty}\|\phi_t\|_{(H^1(\Omega))'} \dt<+\infty.
\end{align}
As a consequence, the trajectory $\phi(t)$ converges strongly in $(H^1(\Omega))'$ as $t\rightarrow+\infty$.
 Thanks to the compactness of the trajectory in $H^2(\Omega)\times L^2(\Omega)$ for $t\geq t_0$, we can conclude from Corollary \ref{omeset2}
 that there exists a steady state $(\phi_{\infty}, \sigma_\infty)\in \omega(\phi_0, \sigma_0)$ such that
\begin{align}\nonumber
\lim_{t\rightarrow+\infty}\left(\|\phi(t)-\phi_{\infty}\|_{H^2(\Omega)}+\|\sigma(t)-\sigma_\infty\|_{L^2(\Omega)}\right)=0.
\end{align}
Namely, $\omega(\phi_0, \sigma_0)$ consists of only one point $(\phi_{\infty}, \sigma_\infty)$. Then $\mu_\infty$ is also uniquely determined as well.
\medskip

\textbf{Step 3. Convergence rate.} It follows from \eqref{LSc} and \eqref{yyy}--\eqref{limEa}  that
\begin{align}
(\widetilde{\mathcal{E}}(t)-\mathcal{E}_\infty)^{2(1-\theta)}
&\leq C\mathcal{D}(t)^{2}+C(1+t)^{-2(1-\theta)(1+\rho)}\nonumber\\
&\leq-C\frac{d}{dt}(\widetilde{\mathcal{E}}(t)-\mathcal{E}_\infty)+C(1+t)^{-2(1-\theta)(1+\rho)}.\nonumber
\end{align}
Then, by \cite[Lemma 2.8]{Ha}, we obtain the decay rate of energy:
\begin{align}
\nonumber \widetilde{\mathcal{E}}(t)-\mathcal{E}_\infty\leq C(1+t)^{-\kappa},\quad \forall\, t\geq t_0,
\end{align}
with the exponent $\kappa$ given by
\begin{align}
\nonumber
\kappa=\min\left\{\frac{1}{1-2\theta},1+\rho\right\}.
\end{align}
From \eqref{BELb} we infer, for any $t\geq t_0$,
\begin{align}
\nonumber
\int_t^{2t}\mathcal{D}(\tau)\mathrm{d}\tau
\leq t^{\frac{1}{2}}\left(\int_t^{2t}\mathcal{D}(\tau)^2 \mathrm{d}\tau\right)^{\frac{1}{2}}
\leq Ct^{\frac{1}{2}}(\widetilde{\mathcal{E}}(t)-\mathcal{E}_\infty)^{\frac{1}{2}}\leq C(1+t)^{\frac{1-\kappa}{2}}.
\end{align}
Thus,
\begin{align}\nonumber
\int_t^{+\infty}\mathcal{D}(\tau)\mathrm{d}\tau
\leq \sum\limits_{j=0}^{+\infty}\int_{2^jt}^{2^{j+1}t}\mathcal{D}(\tau)\mathrm{d}\tau\leq C\sum\limits_{j=0}^{+\infty}(2^jt)^{-\lambda}\leq C(1+t)^{-\lambda},\quad
\forall\, t\geq t_0,
\end{align}
where
\begin{align}
\lambda=\frac{\kappa-1}{2}=\min\left\{\frac{\theta}{1-2\theta},\ \frac{\rho}{2}\right\}>0.\label{lamb}
\end{align}
Therefore, it holds (cf. \eqref{phittt})
\begin{align}\nonumber
\int_t^{+\infty}\|\phi_t(\tau)\|_{(H^1(\Omega))'} \mathrm{d}\tau\leq C\int_t^{+\infty}\mathcal{D}(\tau)\mathrm{d}\tau \leq C(1+t)^{-\lambda},
\quad\forall\, t\geq t_0,
\end{align}
which yields the convergence rate of $\phi$ in $(H^1(\Omega))'$ such that
\begin{align}
\|\phi(t)-\phi_{\infty}\|_{(H^1(\Omega))'}\leq C(1+t)^{-\lambda},\quad\forall\, t\geq t_0.\label{rate1}
\end{align}

Integrating \eqref{p1} over $\Omega$, and then integrating from $t$ to $+\infty$, we deduce the convergence rate of the mean value
\begin{align}
\left|\overline{\phi(t)}-\overline{\phi_\infty}\right|
&=\left|\int_t^{+\infty} \overline{\phi_t(\tau)} \mathrm{d}\tau\right|\nonumber\\
&=\left|\int_t^{+\infty}  \overline{P(\phi(\tau))(\sigma(t)-\mu(t))} \mathrm{d}\tau\right|\nonumber\\
&\leq C\|\sqrt{P(\phi(t))}\|_{L^\infty(t,+\infty; L^2(\Omega))}
\int_t^{+\infty} \left(\int_\Omega P(\phi(\tau))(\sigma(\tau)-\mu(\tau))^2 \dx\right)^\frac12 \mathrm{d}\tau \nonumber\\
&\leq C\int_t^{+\infty}\mathcal{D}(\tau)\mathrm{d}\tau\nonumber\\
&\leq C(1+t)^{-\lambda},\quad\forall\, t\geq t_0.\label{ratemeanphi}
\end{align}
In a similar manner, we have
\begin{align}
\left|\overline{\sigma(t)}-\sigma_\infty\right|
&\leq C\int_t^{+\infty}\mathcal{D}(\tau)\mathrm{d}\tau+C\int_t^{+\infty}\|u(\tau)\|_{L^2(\Omega)}\mathrm{d}\tau\nonumber\\
&\leq C(1+t)^{-\lambda},\quad\forall\, t\geq t_0.\label{ratemeansig}
\end{align}

We now proceed to prove higher-order estimates using the energy method (see e.g., \cite{WGZ07,GWZ06}). We just need to work with $t\geq t_0$ such that the
uniform  estimates \eqref{esmuh1a}--\eqref{esphih3} are available.
Set the differences of functions
\begin{align}
\hat{\phi}(t)=\phi(t)-\phi_\infty, \quad \hat{\sigma}(t)=\sigma(t)-\sigma_\infty, \quad \hat{\mu}(t)=\mu(t)-\mu_\infty.\nonumber
\end{align}
Then the triple $(\hat{\phi}, \hat{\sigma}, \hat{\mu})$ satisfies the following system
\begin{align}
&\hat{\phi}_t-\Delta \hat{\mu}=P(\phi)(\hat{\sigma}-\hat{\mu}),\label{d1}\\
&\hat{\mu}=-\Delta \hat{\phi}+F'(\phi)-F'(\phi_\infty),\label{d2}\\
&\hat{\sigma}_t-\Delta \hat{\sigma}=-P(\phi)(\hat{\sigma}-\hat{\mu}) +u,\label{d3}\\
&\partial_\nu \hat{\phi}=\partial_\nu\hat{\mu}=\partial_\nu \hat{\sigma}=0,\label{dbc}\\
&\hat{\phi}|_{t=0}=\phi_0-\phi_\infty,\quad \hat{\sigma}|_{t=0}=\sigma_0-\sigma_\infty.\label{dini}
\end{align}
Testing \eqref{d1} by $\hat{\mu}$, \eqref{d3} by $\hat{\sigma}$, respectively, then adding the resultants together, we have
\begin{align}
&\frac{\mathrm{d}}{\dt}\left(\frac12\|\nabla \hat{\phi}\|_{L^2(\Omega)}^2+ \int_\Omega \left(F(\phi) - F(\phi_\infty)
+ F'(\phi_\infty) \phi_\infty - F'(\phi_\infty)\phi\right) \dx +\frac12\|\hat{\sigma}\|_{L^2(\Omega)}^2\right)\nonumber\\
&\qquad +\|\nabla \hat{\mu}\|_{L^2(\Omega)}^2+\|\nabla \hat{\sigma}\|_{L^2(\Omega)}^2+\int_\Omega P(\phi)(\hat{\sigma}-\hat{\mu})^2 \dx\nonumber\\
&\quad = \int_\Omega u\hat{\sigma} \dx :=I_1.\label{D1}
\end{align}
Testing \eqref{d1} by  $A^{-1}(\hat{\phi}-\overline{\hat{\phi}})$, we get
\begin{align}
&\frac12 \frac{\mathrm{d}}{\dt}\|\hat{\phi}-\overline{\hat{\phi}}\|_{(H^1(\Omega))'}^2+\|\nabla \hat{\phi}\|_{L^2(\Omega)}^2\nonumber\\
&\quad =-\int_\Omega (F'(\phi)-F'(\phi_\infty)) \hat{\phi} \dx + \overline{\hat{\phi}} \int_\Omega (F'(\phi)-F'(\phi_\infty)) \dx\nonumber\\
&\qquad + \int_\Omega P(\phi)(\hat{\sigma}-\hat{\mu})A^{-1}(\hat{\phi}-\overline{\hat{\phi}}) \dx\nonumber\\
&\quad :=I_2+I_3+I_4.\label{D2}
\end{align}
Using Poincar\'e's inequality and Young's inequality, $I_1$ can be estimated as
\begin{align}
I_1&\leq \|u\|_{L^2(\Omega)}(\|\hat{\sigma}-\overline{\hat{\sigma}}\|_{L^2(\Omega)}+\|\overline{\sigma}-\overline{\sigma_\infty}\|_{L^2(\Omega)})\nonumber\\
&\leq \frac12\|\nabla \hat{\sigma}\|_{L^2(\Omega)}^2+ C\|u\|_{L^2(\Omega)}^2+C|\overline{\sigma}-\overline{\sigma_\infty}|^2\nonumber\\
& \leq \frac12\|\nabla \hat{\sigma}\|_{L^2(\Omega)}^2+ C(1+t)^{-2\lambda}.\nonumber
\end{align}
By the Newton--Leibniz formula
 \begin{align}
  F(\phi)=F(\phi_\infty) +
F'(\phi_\infty)(\phi-\phi_\infty) +\int_0^1\!\!\int_0^1 F''(sz \phi+(1-sz)\phi_\infty)z(\phi-\phi_\infty)^2
\mathrm{d}s\mathrm{d}z,\nonumber
\end{align}
we deduce from \eqref{rate1} and \eqref{ratemeanphi} that
\begin{align}
&\left|\int_\Omega \left(F(\phi) - F(\phi_\infty)+  F'(\phi_\infty) \phi_\infty -  F'(\phi_\infty)\phi\right) \dx\right|\nonumber\\
&\quad =   \left\vert \int_\Omega \int_0^1\!\!\int_0^1
F''(sz\phi+(1-sz)\phi_\infty)z\hat{\phi}^2 \mathrm{d}s\, \mathrm{d}z \dx \right\vert\nonumber\\
&\leq  \max_{s, z\in [0,1]} \|F''(sz \phi+(1-sz)\phi_\infty)\|_{L^\infty(\Omega)}\|\hat{\phi}\|^2_{L^2(\Omega)}\nonumber\\
&\leq  C\|\hat{\phi}\|^2_{L^2(\Omega)}\nonumber\\
&\leq C\|\nabla \hat{\phi}\|_{L^2}\|\hat{\phi}-\overline{\hat{\phi}}\|_{(H^1(\Omega))'}+ C|\overline{\hat{\phi}}|^2\nonumber\\
&\leq \frac14\|\nabla \hat{\phi}\|_{L^2(\Omega)}^2+ C(1+t)^{-2\lambda},\quad \forall\,t\geq t_0.
\label{cwascr10}
\end{align}
In a similar manner, we have
\begin{align}
I_2+I_3&\leq \frac14\|\nabla \hat{\phi}\|_{L^2(\Omega)}^2+ C(1+t)^{-2\lambda},\quad \forall\, t\geq t_0. \label{cwascr10a}
\end{align}
Finally, for $I_4$, it holds
\begin{align}
I_4&\leq
\|\sqrt{P(\phi)}\|_{L^\infty(\Omega)}\|\sqrt{P(\phi)}(\hat{\sigma}-\hat{\mu})\|_{L^2(\Omega)}\|\hat{\phi}-\overline{\hat{\phi}}\|_{(H^1(\Omega))'}\nonumber\\
&\leq \frac14\int_\Omega P(\phi)(\hat{\sigma}-\hat{\mu})^2\dx+ C(1+t)^{-2\lambda},\quad \forall\,t\geq t_0.
\label{cwascr10b}
\end{align}
Denote
\begin{align}
\mathcal{Y}(t)&= \frac12\|\nabla \hat{\phi}\|_{L^2(\Omega)}^2+ \int_\Omega \left(F(\phi) - F(\phi_\infty)
+ F'(\phi_\infty) \phi_\infty - F'(\phi_\infty)\phi\right)\dx \nonumber\\
&\quad +\frac12\|\hat{\sigma}\|_{L^2(\Omega)}^2+\frac12\|\hat{\phi}-\overline{\hat{\phi}}\|_{(H^1(\Omega))'}^2.
\nonumber
\end{align}
It follows that
\begin{align}
\begin{cases}
&\mathcal{Y}(t)\geq C_3(\|\nabla \hat{\phi}(t)\|_{L^2(\Omega)}^2+ \|\hat{\sigma}(t)\|_{L^2(\Omega)}^2)-C_4(1+t)^{-2\lambda},\quad \forall\,t\geq t_0,\\
&\mathcal{Y}(t)\leq C_3'(\|\nabla \hat{\phi}(t)\|_{L^2(\Omega)}^2+ \|\hat{\sigma}(t)\|_{L^2(\Omega)}^2)+C_4'(1+t)^{-2\lambda},\quad \forall\,t\geq t_0,
\end{cases}
\label{decY1}
\end{align}
and from \eqref{D1}, \eqref{D2}, we have
\begin{align}
\frac{d}{dt}\mathcal{Y}(t)+C_5\mathcal{Y}(t)\leq C_6(1+t)^{-2\lambda}.\label{decY2}
\end{align}
From the above inequality, we can easily obtain the decay estimate (cf. e.g., \cite{WGZ07})
\begin{align}
\mathcal{Y}(t)\leq C(1+t)^{-2\lambda}, \quad \forall\, t\geq t_0.
\end{align}
The above estimate together with \eqref{ratemeanphi}, \eqref{decY1} and Poincar\'e's inequality yields that
\begin{align}
\|\hat{\phi}(t)\|_{H^1(\Omega)}+\|\hat{\sigma}(t)\|_{L^2(\Omega)}\leq C(1+t)^{-\lambda}, \quad \forall\, t\geq t_0.\label{ratehi}
\end{align}
Finally, by the higher-order estimate \eqref{esphih3} and interpolation, we have
\begin{align}
\|\hat{\mu}(t)\|_{L^2(\Omega)}&\leq \|\Delta \hat{\phi}(t)\|_{L^2(\Omega)}+\|F'(\phi(t))-F'(\phi_\infty)\|_{L^2(\Omega)}\nonumber\\
&\leq \|\nabla \Delta \hat{\phi}(t)\|^\frac12_{L^2(\Omega)}\|\nabla \hat{\phi}(t)\|_{L^2(\Omega)}^\frac12
+\|F''\|_{L^3(\Omega)}\|\hat{\phi}(t)\|_{L^6(\Omega)}\nonumber\\
&\leq C\|\hat{\phi}(t)\|^\frac12_{H^1(\Omega)},\nonumber
\end{align}
which together with \eqref{ratehi} gives the decay estimate \eqref{ratehib}.

The proof of Theorem \ref{longtime} is complete.

\section{Lyapunov Stability with Zero Mass Source}
\setcounter{equation}{0}
\label{lyap}

Within this section, we always assume $u=0$. Then from \eqref{masscon} it follows that the total mass of the system \eqref{p1}--\eqref{ini} is now conserved:
\begin{align}
\int_\Omega(\phi(t)+\sigma(t))\dx=\int_\Omega(\phi_0+\sigma_0)\dx,\quad \forall\, t\geq 0.\label{mass0}
\end{align}

\begin{definition}\label{mini}
Let $m\in \mathbb{R}$ be an arbitrary given constant. Set
\begin{align}
\mathcal{Z}_m=\Big\lbrace (\phi, \sigma)\in H^1(\Omega)\times L^2(\Omega): \int_\Omega (\phi+\sigma) \dx = |\Omega| m\Big\rbrace.\label{Zm}
\end{align}
Any $(\phi^*, \sigma^*)\in \mathcal{Z}_m$ is called a \emph{local energy minimizer}
of the total energy $\mathcal{E}(\phi, \sigma)$ defined in \eqref{E}, if there
exists a constant $\chi > 0$ such that $\mathcal{E}(\phi^*, \sigma^*)\leq \mathcal{E}(\phi, \sigma)$, for all $(\phi, \sigma)\in \mathcal{Z}_m$ satisfying
$\|(\phi-\phi^*, \sigma-\sigma^*)\|_{H^1(\Omega)\times L^2(\Omega)} < \chi$. If $\chi=+\infty$, then $(\phi^*, \sigma^*)$ is called a
\emph{global energy minimizer} of $\mathcal{E}(\phi, \sigma)$ in $\mathcal{Z}_m$.
\end{definition}
\noindent
The main result of this section reads as follows.
\begin{theorem}[Lyapunov Stability] \label{stability}
Assume that \textbf{(F1)}, \textbf{(F2)}, \textbf{(P1)}, \textbf{(P2)} are satisfied and $u=0$.
Given $m\in \mathbb{R}$, let $(\phi^*, \sigma^*)$ be a local energy minimizer of $\mathcal{E}(\phi, \sigma)$ in $\mathcal{Z}_m$.
Then, for any $\epsilon>0$, there exists a constant $\eta\in (0,1)$ such that for arbitrary
initial datum $(\phi_0, \sigma_0)\in (H^2_N(\Omega)\cap H^3(\Omega))\times H^1(\Omega)$ satisfying $\int_\Omega (\phi_0+\sigma_0) \dx =|\Omega|m$ and
$\|\phi_0-\phi^*\|_{H^1(\Omega)}+\|\sigma_0-\sigma^*\|_{L^2(\Omega)}\leq \eta$,
problem \eqref{p1}--\eqref{ini} admits a unique global strong solution $(\phi, \sigma)$ such that
\begin{align}
\|\phi(t)-\phi^*\|_{H^1(\Omega)}+\|\sigma(t)-\sigma^*\|_{L^2(\Omega)} \leq \epsilon,\quad \forall\, t\geq 0.
\label{stabii}
\end{align}
Namely, any local energy minimizer of $\mathcal{E}(\phi, \sigma)$ in $\mathcal{Z}_m$ is locally Lyapunov stable.
 \end{theorem}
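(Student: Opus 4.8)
The plan is to exploit that, for $u=0$, the total free energy $\mathcal{E}$ is a strict Lyapunov functional and the total mass is conserved, see \eqref{mass0}, and to couple the energy dissipation with the \L ojasiewicz--Simon inequality of Lemma~\ref{LS2}, in the spirit of the stability analysis for the Cahn--Hilliard equation (cf.~\cite{WZ04,RH99}). First I would observe that a local energy minimizer $(\phi^*,\sigma^*)$ of $\mathcal{E}$ in $\mathcal{Z}_m$ is automatically a steady state of the system: taking mass-preserving variations within $\mathcal{Z}_m$ forces $\sigma^*$ to be a constant, $\mu^*:=-\Delta\phi^*+F'(\phi^*)$ to be a constant, and $\sigma^*=\mu^*$; elliptic regularity then yields $\phi^*\in H^2_N(\Omega)\cap H^3(\Omega)$, so that $(\phi^*,\mu^*,\sigma^*)$ solves the elliptic problem \eqref{sta1} together with \eqref{sta2}, \eqref{stamu1}, with $m_\infty=m$ and $m_u=0$. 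Consequently Lemma~\ref{LS2} applies at $(\phi^*,\mu^*,\sigma^*)$ and provides constants $\theta\in(0,\tfrac12)$ and $\beta>0$, depending only on $\phi^*$, $m$, $\Omega$, such that, after absorbing $\|\mu-\overline\mu\|_{(H^1(\Omega))'}\le C\|\nabla\mu\|_{L^2(\Omega)}$,
\begin{align*}
C_*\big(\|\nabla\mu\|_{L^2(\Omega)}+\|\nabla\sigma\|_{L^2(\Omega)}+\|\sqrt{P(\phi)}(\mu-\sigma)\|_{L^2(\Omega)}\big)\ \ge\ \big|\mathcal{E}(\phi,\sigma)-\mathcal{E}(\phi^*,\sigma^*)\big|^{1-\theta}
\end{align*}
for all $(\phi,\sigma)\in H^2_N(\Omega)\times H^1(\Omega)$ with $\int_\Omega(\phi+\sigma)\dx=|\Omega|m$ and $\|\phi-\phi^*\|_{H^1(\Omega)}<\beta$, where $\mu=-\Delta\phi+F'(\phi)$ and $C_*$ stays bounded as long as $\|\phi\|_{H^2(\Omega)}+\|\sigma\|_{H^1(\Omega)}$ does.

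It suffices to prove \eqref{stabii} for small $\epsilon$, so fix $\epsilon\in(0,\min\{\beta,\chi\})$ with $\chi$ as in Definition~\ref{mini}, and argue by contradiction, assuming \eqref{stabii} fails for every $\eta$. Then there are initial data $(\phi_{0,n},\sigma_{0,n})\in(H^2_N(\Omega)\cap H^3(\Omega))\times H^1(\Omega)$ with $\int_\Omega(\phi_{0,n}+\sigma_{0,n})\dx=|\Omega|m$ and $(\phi_{0,n},\sigma_{0,n})\to(\phi^*,\sigma^*)$ in $H^1(\Omega)\times L^2(\Omega)$, whose strong solutions $(\phi_n,\sigma_n)$ --- global by Proposition~\ref{strongexe} --- satisfy $d_n(t):=\|\phi_n(t)-\phi^*\|_{H^1(\Omega)}+\|\sigma_n(t)-\sigma^*\|_{L^2(\Omega)}<\epsilon$ on $[0,t_n)$ and $d_n(t_n)=\epsilon$ at the first exit time $t_n<+\infty$. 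On $[0,t_n]$ one has, by \eqref{BEL} with $u=0$, that $t\mapsto\mathcal{E}(\phi_n(t),\sigma_n(t))$ is non-increasing, and, since $(\phi_n(t),\sigma_n(t))\in\mathcal{Z}_m$ stays within distance $\epsilon\le\chi$ of $(\phi^*,\sigma^*)$, that $\mathcal{E}(\phi_n(t),\sigma_n(t))\ge\mathcal{E}(\phi^*,\sigma^*)=:\mathcal{E}_*$; moreover $\mathcal{E}(\phi_{0,n},\sigma_{0,n})\to\mathcal{E}_*$ by continuity of $\mathcal{E}$ on $H^1(\Omega)\times L^2(\Omega)$ (using $r<6$ in \textbf{(F1)}).

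Granting for the moment that $\|\phi_n(t)\|_{H^3(\Omega)}+\|\sigma_n(t)\|_{H^1(\Omega)}$ is bounded uniformly in $n$ on $[0,t_n]$, the argument runs as follows. Writing \eqref{BEL} as $\tfrac{\mathrm d}{\dt}(\mathcal{E}(\phi_n,\sigma_n)-\mathcal{E}_*)+\mathcal{D}_n^2=0$ with $\mathcal{D}_n^2=\|\nabla\mu_n\|_{L^2(\Omega)}^2+\|\nabla\sigma_n\|_{L^2(\Omega)}^2+\int_\Omega P(\phi_n)(\mu_n-\sigma_n)^2\dx$, the inequality of the first paragraph (with constants uniform in $n$) gives $\mathcal{D}_n\ge c(\mathcal{E}(\phi_n,\sigma_n)-\mathcal{E}_*)^{1-\theta}$, hence $-\tfrac{\mathrm d}{\dt}(\mathcal{E}(\phi_n,\sigma_n)-\mathcal{E}_*)^\theta\ge c\theta\,\mathcal{D}_n$ wherever the energy exceeds $\mathcal{E}_*$ (if it ever equals $\mathcal{E}_*$ the solution is stationary afterwards and the next bound is immediate). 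Integrating, $\int_0^{t_n}\mathcal{D}_n\,\mathrm d\tau\le(c\theta)^{-1}(\mathcal{E}(\phi_{0,n},\sigma_{0,n})-\mathcal{E}_*)^\theta\to0$. Since \eqref{phittt} with $u=0$, together with the analogous bound for $\sigma_{n,t}$ drawn from \eqref{w2} (using $\|P(\phi_n)(\mu_n-\sigma_n)\|_{(H^1(\Omega))'}\le C\|\sqrt{P(\phi_n)}(\mu_n-\sigma_n)\|_{L^2(\Omega)}$), yields $\|\phi_{n,t}(t)\|_{(H^1(\Omega))'}+\|\sigma_{n,t}(t)\|_{(H^1(\Omega))'}\le C\mathcal{D}_n(t)$, integration in time shows $(\phi_n(t_n),\sigma_n(t_n))\to(\phi^*,\sigma^*)$ in $(H^1(\Omega))'\times(H^1(\Omega))'$. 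Interpolating this with the uniform $H^3\times H^1$ bound, via $\|v\|_{H^1(\Omega)}\le C\|v\|_{(H^1(\Omega))'}^{1/2}\|v\|_{H^3(\Omega)}^{1/2}$ and $\|w\|_{L^2(\Omega)}\le C\|w\|_{(H^1(\Omega))'}^{1/2}\|w\|_{H^1(\Omega)}^{1/2}$, forces $d_n(t_n)\to0$, which contradicts $d_n(t_n)=\epsilon>0$ and proves \eqref{stabii}.

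The main obstacle is exactly the uniform higher-order bound granted above, which is delicate because $d_n(0)\to0$ only takes place in $H^1(\Omega)\times L^2(\Omega)$, a weaker topology than the $H^3\times H^1$ regularity of the solutions. For $t\ge\delta>0$ the required bounds follow from the a priori estimates of Section~\ref{unif}, whose constants depend on the data only through the energy and the mass --- both bounded here --- but with a loss of order $1+\delta^{-1}$; on a fixed short interval $[0,\delta]$ Proposition~\ref{weak}(2) only gives $\phi_n\to\phi^*$ in $L^\infty(0,\delta;(H^1(\Omega))')\cap L^2(0,\delta;H^1(\Omega))$ and $\sigma_n\to\sigma^*$ in $L^\infty(0,\delta;(H^1(\Omega))')\cap L^2(0,\delta;L^2(\Omega))$. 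To bridge this one would combine the energy bound $\mathcal{E}(\phi_n(t),\sigma_n(t))\le\mathcal{E}(\phi_{0,n},\sigma_{0,n})\to\mathcal{E}_*$ with the weak lower semicontinuity of $\mathcal{E}$ on $H^1(\Omega)\times L^2(\Omega)$ (here \textbf{(F1)} makes $F_0$ convex and $\int_\Omega F_1(\cdot)\dx$ continuous on $L^2(\Omega)$) and the local minimality of $(\phi^*,\sigma^*)$, so as to upgrade the weak convergence to $d_n(t)\to0$ on $[0,\delta]$ and, in particular, to conclude that $(\phi_n(\delta),\sigma_n(\delta))$ is both $H^1\times L^2$-close to $(\phi^*,\sigma^*)$ and uniformly bounded in $H^3\times H^1$; one then restarts the clock at $t=\delta$, where Remark~\ref{reg}(2) supplies uniform bounds for all later times, and applies the previous argument on $[\delta,+\infty)$. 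A minor additional point is to make the Euler--Lagrange characterization of the minimizer in the first paragraph rigorous, i.e.\ that admissible mass-preserving variations force $\mu^*\equiv\sigma^*\equiv\mathrm{const}$.
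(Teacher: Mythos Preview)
Your overall strategy is the same as the paper's: characterize the local minimizer as a steady state, apply the \L ojasiewicz--Simon inequality (Lemma~\ref{LS2} with $m_u=0$) along the trajectory inside a small neighborhood, integrate the resulting bound $-\tfrac{\mathrm d}{\dt}(\mathcal{E}-\mathcal{E}_*)^\theta \ge c(\|\phi_t\|_{(H^1)'}+\|\sigma_t\|_{(H^1)'})$, and then interpolate the $(H^1)'$-smallness against uniform $H^3\times H^1$ bounds to control $H^1\times L^2$. The Euler--Lagrange identification you flag as a ``minor point'' is exactly Lemma~\ref{criticalb}.

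The substantive difference lies in how the uniform $H^3\times H^1$ bounds are obtained. The paper does \emph{not} attempt your smoothing-and-restart scheme. Instead, in its Step~1 it simply fixes auxiliary constants $M_1,M_2$ and restricts to initial data with $\|\phi_0\|_{H^3}\le M_1$, $\|\sigma_0\|_{H^1}\le M_2$; the final $\eta$ then depends on $M_1,M_2$ through the constants $M_5,\ldots,C_8$. With this a~priori control, the differential inequality \eqref{dYta} and the integral bound \eqref{lowes4b} immediately give a global bound on $\mathcal{A}(t)$ (Step~2), hence on $\|\phi(t)\|_{H^3}+\|\sigma(t)\|_{H^1}$, and the interpolation in Step~4 is straightforward. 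This bypasses your ``main obstacle'' entirely, at the cost of $\eta$ depending on the higher-norm bound---a dependence not made explicit in the theorem statement but standard in such stability results.

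Your proposed workaround is more ambitious but, as written, incomplete. The energy argument you sketch (weak $H^1\times L^2$ limit identified via $(H^1)'$-convergence, energy squeeze via weak lower semicontinuity, then norm convergence) does yield $d_n(t)\to 0$ for each \emph{fixed} $t\in[0,\delta]$, and notably does so without invoking local minimality. However, to conclude $t_n>\delta$ for large $n$ you need $\sup_{t\in[0,\delta]}d_n(t)\to 0$, and pointwise convergence does not give this without an equicontinuity argument---which in turn would seem to require precisely the uniform higher-order control on $[0,\delta]$ you are trying to avoid. The paper's route, assuming the $H^3\times H^1$ bound on the data, is the cleaner way through.
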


We first derive some properties for the critical point
of $\mathcal{E}(\phi, \sigma)$ in $\mathcal{Z}_m$.
For any given $m\in \mathbb{R}$, we consider the following problem for $(\phi, \mu, \sigma)$
\begin{align}
\begin{cases}
&-\Delta \phi+F'(\phi)=\mu, \qquad \ \text{in}\ \Omega,\\
&\partial_\nu \phi=0,\qquad\qquad \qquad \ \ \text{on}\ \partial\Omega,\\
& \displaystyle{\int_\Omega (\phi+\sigma) \dx = |\Omega|m,}
\end{cases}\label{sta1b}
\end{align}
where $\mu$ and $\sigma$ are constants given by
\begin{align}
&\sigma=\mu=|\Omega|^{-1}\int_\Omega F'(\phi) \dx.\label{sta2b}
\end{align}
Then we have
\begin{lemma} \label{criticalb}
Let assumption (\textbf{F1}) be satisfied.

(1) If $(\phi^*, \sigma^*) \in H^2_N(\Omega)\times \mathbb{R}$ is a strong solution
to problem \eqref{sta1b}--\eqref{sta2b}, then $(\phi^*, \sigma^*)$ is a critical point
of $\mathcal{E}(\phi, \sigma)$ in
$\mathcal{Z}_m$. Conversely, if
$(\phi^*, \sigma^*)$ is a critical point of $\mathcal{E}(\phi, \sigma)$ in
$\mathcal{Z}_m$, then $\phi^*\in H^2_N(\Omega)$, $\sigma^*\in\mathbb{R}$ and they satisfy \eqref{sta1b}--\eqref{sta2b}.

(2) If $(\phi^*, \sigma^*)$ is a local energy minimizer of $\mathcal{E}(\phi, \sigma)$ in $\mathcal{Z}_m$, then $(\phi^*, \sigma^*)$ is a critical point
of $\mathcal{E}(\phi, \sigma)$.

(3) The functional $\mathcal{E}(\phi, \sigma)$ has at least one
    minimizer $(\phi^*, \sigma^*) \in \mathcal{Z}_m$ such that
\begin{equation}
    \mathcal{E}(\phi^*, \sigma^*)= \displaystyle{\inf_{(\phi, \sigma)\in \mathcal{Z}_m}}\mathcal{E}(\phi, \sigma).
\end{equation}
\end{lemma}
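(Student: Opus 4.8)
The plan is to handle the three assertions by classical variational arguments adapted to the affine constraint set $\mathcal{Z}_m$. For Part (1), I would first record the Gâteaux derivative of $\mathcal{E}$ on $H^1(\Omega)\times L^2(\Omega)$,
$$D\mathcal{E}(\phi,\sigma)[\psi,\zeta]=\int_\Omega\big(\nabla\phi\cdot\nabla\psi+F'(\phi)\psi\big)\dx+\int_\Omega\sigma\zeta\dx,$$
which is well defined (indeed $\mathcal{E}\in C^1$) because $F\in C^5$ with growth $|F'(s)|\le C(1+|s|^{r-1})$, $r<6$, together with $H^1(\Omega)\hookrightarrow L^6(\Omega)$ for $d\le 3$. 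Since the constraint defining $\mathcal{Z}_m$ is affine with derivative the nonzero (hence onto $\mathbb{R}$) functional $(\psi,\zeta)\mapsto\int_\Omega(\psi+\zeta)\dx$, the Lagrange multiplier rule applies: $(\phi^*,\sigma^*)\in\mathcal{Z}_m$ is a critical point iff there is $\lambda\in\mathbb{R}$ with $D\mathcal{E}(\phi^*,\sigma^*)[\psi,\zeta]=\lambda\int_\Omega(\psi+\zeta)\dx$ for all $(\psi,\zeta)$. Taking $\psi=0$ forces $\sigma^*\equiv\lambda$ (a constant); taking $\zeta=0$ gives the weak form of $-\Delta\phi^*+F'(\phi^*)=\lambda$ with $\partial_\nu\phi^*=0$. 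A bootstrap then upgrades regularity: from $\phi^*\in H^1\hookrightarrow L^6$ and \textbf{(F1)}, $F'(\phi^*)\in L^{6/(r-1)}$ with $6/(r-1)>1$, so elliptic theory gives $\phi^*\in W^{2,6/(r-1)}$, and iterating the gain in integrability (this is where $r<6$ is used) yields $\phi^*\in H^2_N(\Omega)$. Integrating the equation over $\Omega$ identifies $\lambda=|\Omega|^{-1}\int_\Omega F'(\phi^*)\dx$, and together with $(\phi^*,\sigma^*)\in\mathcal{Z}_m$ this is exactly \eqref{sta1b}--\eqref{sta2b}. The converse is immediate: reversing the computation, any strong solution of \eqref{sta1b}--\eqref{sta2b} satisfies the Euler--Lagrange identity with multiplier $\lambda=\mu=\sigma^*$.

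For Part (2), if $(\phi^*,\sigma^*)$ is a local energy minimizer in $\mathcal{Z}_m$, then for every admissible direction $(\psi,\zeta)$ with $\int_\Omega(\psi+\zeta)\dx=0$ the scalar function $t\mapsto\mathcal{E}(\phi^*+t\psi,\sigma^*+t\zeta)$ (differentiable near $t=0$ by the same growth considerations) has a local minimum at $t=0$, so its derivative there vanishes; that is precisely $D\mathcal{E}(\phi^*,\sigma^*)[\psi,\zeta]=0$ on the tangent space of $\mathcal{Z}_m$, i.e.\ $(\phi^*,\sigma^*)$ is a critical point. Part (1) then also gives $\phi^*\in H^2_N(\Omega)$ and $\sigma^*\in\mathbb{R}$.

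For Part (3), I would use the direct method. The bound $F(s)\ge\alpha_5|s|-\alpha_6$ gives $\mathcal{E}(\phi,\sigma)\ge\frac12\|\nabla\phi\|_{L^2}^2+\alpha_5\|\phi\|_{L^1}-\alpha_6|\Omega|+\frac12\|\sigma\|_{L^2}^2$, so $I_m:=\inf_{\mathcal{Z}_m}\mathcal{E}>-\infty$, and for a minimizing sequence $(\phi_n,\sigma_n)$ the quantities $\|\nabla\phi_n\|_{L^2}$ and $\|\sigma_n\|_{L^2}$ are bounded; the constraint gives $\overline{\phi_n}=m-\overline{\sigma_n}$ with $|\overline{\sigma_n}|\le|\Omega|^{-1/2}\|\sigma_n\|_{L^2}$, so $\overline{\phi_n}$ is bounded and hence so is $\|\phi_n\|_{H^1}$. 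Passing to a subsequence, $\phi_n\rightharpoonup\phi^*$ in $H^1(\Omega)$ and (Rellich, $d\le3$) strongly in $L^p(\Omega)$ for every $p<6$ and a.e., while $\sigma_n\rightharpoonup\sigma^*$ in $L^2(\Omega)$; the constraint passes to the limit against the constant $1$, so $(\phi^*,\sigma^*)\in\mathcal{Z}_m$. Finally $\|\nabla\phi\|_{L^2}^2$ and $\|\sigma\|_{L^2}^2$ are weakly lower semicontinuous, and since $F(\phi_n)+\alpha_6\ge0$ with $\phi_n\to\phi^*$ a.e., Fatou's lemma yields $\int_\Omega F(\phi^*)\dx\le\liminf_n\int_\Omega F(\phi_n)\dx$; hence $\mathcal{E}(\phi^*,\sigma^*)\le\liminf_n\mathcal{E}(\phi_n,\sigma_n)=I_m$, and since $(\phi^*,\sigma^*)\in\mathcal{Z}_m$ equality holds.

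I expect the only genuinely technical point to be the elliptic bootstrap in Part (1) upgrading a weak constrained critical point to an $H^2_N$ (indeed smoother, by \textbf{(F1)}) solution, where the range $r\in[2,6)$ is exactly what makes the iteration close; the remaining ingredients --- the Lagrange multiplier rule on an affine constraint, vanishing of the first variation at a minimizer, and the direct method with a Fatou argument for the potential term --- are routine.
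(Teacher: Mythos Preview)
Your proposal is correct and follows essentially the same route as the paper: the paper also computes the first variation of $\mathcal{E}$ along admissible directions (presented via a Lagrange function), reads off $\sigma^*=\mu$ and the weak Neumann problem for $\phi^*$, and then invokes elliptic regularity to obtain $\phi^*\in H^2_N(\Omega)$, declaring the remaining parts ``standard'' and omitted. Your treatment is simply more explicit --- you spell out the Lagrange multiplier rule, the $L^p$ bootstrap using $r<6$, the first-variation argument for Part~(2), and the direct method with Fatou for Part~(3) --- all of which the paper leaves to the reader.
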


\begin{proof}
Consider the Lagrange function
\begin{align}
\mathcal{L}(\phi, \sigma, \mu)=\mathcal{E}(\phi, \sigma)-\mu\left(\int_\Omega  (\phi+\sigma) \dx-|\Omega| m\right),\nonumber
\end{align}
where $\mu$ is a constant Lagrange multiplier for the mass constraint.
For any $(\psi, \xi)\in H^1(\Omega)\times L^2(\Omega)$ satisfying the constraint $\int_\Omega (\psi+\xi) \dx=0$, we have
\begin{align}
&\left.\frac{\mathrm{d}}{\mathrm{d}\varepsilon}\right|_{\varepsilon=0}\mathcal{L}(\phi^*+\varepsilon \psi, \sigma^*+\varepsilon\xi)\nonumber\\
&\quad =\lim_{\varepsilon\to 0} \frac{\mathcal{L}(\phi^*+\varepsilon \psi, \sigma^*+\varepsilon\xi)-\mathcal{L}(\phi^*, \sigma^*)}{\varepsilon}\nonumber\\
&\quad =\int_\Omega (\nabla \phi^*\cdot \nabla \psi+ F'(\phi^*)\psi -\mu\psi)\dx+\int_\Omega (\sigma^*-\mu)\xi \dx.\nonumber
\end{align}
Hence, the critical point of $\mathcal{E}(\phi, \sigma)$ in $\mathcal{Z}_m$, denoted by $(\phi^*, \sigma^*)$, is a weak solution of problem \eqref{sta1b}
satisfying $\sigma^*=\mu^*=|\Omega|^{-1}\int_\Omega F'(\phi^*)\dx$. Then by the elliptic regularity theory we have $\phi^*\in H^2_N(\Omega)$.
Proof for other statements of the lemma is standard and is omitted here.
\end{proof}

\textbf{Proof of Theorem \ref{stability}.}
Let $(\phi, \mu, \sigma)$ be a strong solution to problem
\eqref{p1}--\eqref{ini}. Using a similar argument like \cite[Lemma 6.4]{GGW18}, we have the following estimates
\begin{align}
&\|\nabla \mu\|_{L^2(\Omega)} \leq  \|\phi\|_{H^3(\Omega)}+\|F''(\phi)\|_{L^\infty(\Omega)}\|\nabla \phi\|_{L^2(\Omega)},\label{eeqq1}
\end{align}
and
\begin{align}
\|\phi\|_{H^3(\Omega)}\leq \overline{C} (\|\nabla \mu\|_{L^2(\Omega)}+
 \|F''(\phi)\|_{L^\infty(\Omega)}\|\phi\|_{H^1(\Omega)}+\|\phi\|_{H^1(\Omega)}),
\label{eeqq2}
\end{align}
where the positive constant $\overline{C}$ only depends on $\Omega$.
Next, recalling the definition of $\mathcal{A}(t)$ (see \eqref{Y}), we infer from \eqref{dYt} that
\begin{align}
\frac{\mathrm{d}}{\dt}\mathcal{A}(t)+\|\nabla \phi_t\|_{L^2(\Omega)}^2+\|\sigma_t\|_{L^2(\Omega)}^2\leq C_1(\mathcal{A}(t)^2+1),\label{dYta}
\end{align}
where the constant $C_1$ only depends on $\|\phi_0\|_{H^1(\Omega)}$, $\|\sigma_0\|_{L^2(\Omega)}$, $\Omega$ and $\alpha_6$.
Besides, from \eqref{BEL1} we still have
\begin{align}
\int_0^{+\infty} \mathcal{A}(t) \dt\leq C_2,\label{lowes4b}
\end{align}
where $C_2$ depends on $\|\phi_0\|_{H^1(\Omega)}$, $\|\sigma_0\|_{L^2(\Omega)}$, $\Omega$ and $\alpha_6$.\medskip

\textbf{Step 1}. For given $m\in \mathbb{R}$, let $(\phi^*, \sigma^*)$ be a local energy minimizer of $\mathcal{E}(\phi, \sigma)$ in $\mathcal{Z}_m$.
Then by Lemma \ref{criticalb}, $(\phi^*, \sigma^*)\in H^2_N(\Omega)\times \mathbb{R}$. By the elliptic regularity theory and assumption \textbf{(F1)}, it is easy
to see that $\phi^*\in H^3(\Omega)$. We consider an initial datum $(\phi_0, \sigma_0)\in (H^2_N(\Omega)\cap H^3(\Omega))\times H^1(\Omega)$ satisfying
$\int_\Omega (\phi_0+\sigma_0) \dx=|\Omega| m$. Besides, we assume that
\begin{align}
&\|\phi_0-\phi^*\|_{H^1(\Omega)}+\|\sigma_0-\sigma^*\|_{L^2(\Omega)}\leq \eta,\label{bdini}\\
&\|\phi_0\|_{H^3(\Omega)}\leq M_1,\quad \|\sigma_0\|_{H^1(\Omega)}\leq M_2,
\end{align}
where $\eta\in (0,1]$ is a constant to be chosen later and $M_1, M_2>0$ are sufficiently large but fixed numbers.
It follows that
\begin{align}
\|\phi_0\|_{H^1(\Omega)}\leq \|\phi^*\|_{H^1(\Omega)}+1,\quad \|\sigma_0\|_{L^2(\Omega)}\leq \|\sigma^*\|_{L^2(\Omega)}+1,
\label{bdinib}
\end{align}
and, as a consequence, we have
\begin{align*}
\mathcal{E}(\phi_0, \sigma_0)\leq M_3,
\end{align*}
where the constant $M_3>0$ depends on $\|\phi^*\|_{H^1(\Omega)}$, $\|\sigma^*\|_{L^2(\Omega)}$, $\Omega$, $\alpha_2$ and $\alpha_4$.\medskip

\textbf{Step 2}. For the above initial datum $(\phi_0, \sigma_0)$, problem \eqref{p1}--\eqref{ini} admits a unique strong solution $(\phi(t), \sigma(t))$
on $[0,+\infty)$ (cf. Remark \ref{reg}).
We note that
\begin{align}
\mathcal{A}(0)\leq M_4,\nonumber
\end{align}
where the constant $M_4>0$ depends on $M_1$, $M_2$, $\Omega$, $\alpha_1$, $\alpha_2$ and $\alpha_4$, but is independent of $\eta$.
From \eqref{dYta}--\eqref{bdinib}, it follows that
\begin{align}
\mathcal{A}(t)\leq M_5,\quad \forall\, t\geq 0,
\end{align}
where the constant $M_5>0$ only depends on $M_4$, $\|\phi^*\|_{H^1(\Omega)}$, $\|\sigma^*\|_{L^2(\Omega)}$, $\Omega$, $\alpha_1$, $\alpha_2$, $\alpha_4$ and
$\alpha_6$.
Then from \eqref{eeqq2} we infer that
\begin{align}
\|\phi(t)\|_{H^3(\Omega)}\leq M_6,\quad \|\sigma(t)\|_{H^1(\Omega)}\leq M_7,\quad \forall\, t\geq 0,
\end{align}
where the constants $M_6$, $M_7$ are again independent of $\eta$.
The above estimates also imply the following estimate on energy difference
\begin{align}
&|\mathcal{E}(\phi_0,\sigma_0)-\mathcal{E}(\phi(t), \sigma(t))|\nonumber\\
&\quad \leq \frac12\|\nabla(\phi(t)+\phi_0)\|_{L^2(\Omega)}\|\nabla (\phi(t)-\phi_0)\|_{L^2(\Omega)}
+\|F'\|_{L^2(\Omega)}\|\phi(t)-\phi_0\|_{L^2(\Omega)}\nonumber\\
&\qquad +\frac12\|\sigma(t)+\sigma_0\|_{L^2(\Omega)}\|\sigma(t)-\sigma_0\|_{L^2(\Omega)}\nonumber\\
&\quad \leq M_8(\|\phi(t)-\phi_0\|_{H^1(\Omega)}+\|\sigma(t)-\sigma_0\|_{L^2(\Omega)}),\nonumber
\end{align}
where $M_8>0$ depends on $\|\phi^*\|_{H^1(\Omega)}$, $\|\sigma^*\|_{L^2(\Omega)}$, $\Omega$, $\alpha_2$, $\alpha_4$ and $\alpha_6$, but is
independent of $\eta$. \medskip

\textbf{Step 3}. For any $\epsilon>0$, we choose
\begin{align}
\varpi=\min\{1, \epsilon, \chi, \beta\}>0,
\end{align}
where $\chi$ is the constant in the definition of the local minimizer (see Definition \ref{mini}) and $\beta$ is the constant associated with
$(\phi^*, \sigma^*)$ given by the \L ojasiewicz-Simon inequality (see Lemma \ref{LS2}, taking $u=0$).
For any $$\eta\in \left(0, \frac{\varpi}{2}\right],$$
 we define
\begin{align}
T_\eta=\inf\{\,t>0, \ \ \|\phi(t)-\phi^*\|_{H^1(\Omega)}+\|\sigma(t)-\sigma^*\|_{L^2(\Omega)}\geq \varpi\}.
\end{align}
Then, by \eqref{bdini} and the fact  $(\phi(t), \sigma(t))\in C([0,+\infty); H^1(\Omega)\times L^2(\Omega))$, we have $T_\eta>0$. \medskip

\textbf{Step 4}. Our aim is to prove that there exists a constant $\eta\in (0, \frac{\varpi}{2}]$ such that $T_\eta=+\infty$. The goal can be achieved by a
contradiction argument. If this is not the case, then for all $\eta\in (0, \frac{\varpi}{2}]$, it holds $T_\eta<+\infty$.
Here and after, we shall always exclude the trivial case such that there is a $t_0\in [0, T_\eta]$ and
$\mathcal{E}(\phi(t_0),\sigma(t_0))=\mathcal{E}(\phi^*, \sigma^*)$.
In this case, by virtue of the energy identity \eqref{BEL} (with $u=0$), we have $\|\phi_t\|_{(H^1(\Omega))'}=\|\sigma_t\|_{(H^1(\Omega))'}=0$,
for all $t\geq t_0$, and the evolution simply stops.

Next, we consider the case $\mathcal{E}(\phi(t),\sigma(t))>\mathcal{E}(\phi^*, \sigma^*)$, for $t\in [0,T_\eta]$. By the definition of $\varpi$, the energy
identity \eqref{BEL} (with $u=0$) and Lemma \ref{LS2} (with $m_u=0$), we see that on $[0,T_\eta]$ it holds
\begin{align}
&-\frac{\mathrm{d}}{\dt}[\mathcal{E}(\phi(t),\sigma(t))-\mathcal{E}(\phi^*, \sigma^*)]^\theta\nonumber\\
&\quad =-\theta [\mathcal{E}(\phi(t),\sigma(t))-\mathcal{E}(\phi^*, \sigma^*)]^{\theta-1}\frac{\mathrm{d}}{\dt}\mathcal{E}(\phi(t),\sigma(t))\nonumber\\
&\quad \geq C_3\theta\frac{\| \nabla \mu\|_{L^2(\Omega)}^2 + \| \nabla \sigma\|_{L^2(\Omega)}^2
+\| \sqrt{P(\phi)}(\mu-\sigma)\|_{L^2(\Omega)}^2}{\| \nabla \mu\|_{L^2(\Omega)}
+ \| \nabla \sigma\|_{L^2}+\| \sqrt{P(\phi)}(\mu-\sigma)\|_{L^2(\Omega)}}\nonumber\\
&\quad \geq C_4\theta\left(\| \nabla \mu\|_{L^2(\Omega)} + \| \nabla \sigma\|_{L^2(\Omega)}+\| \sqrt{P(\phi)}(\mu-\sigma)\|_{L^2(\Omega)}\right)\nonumber\\
&\quad \geq C_5(\|\phi_t\|_{(H^1(\Omega))'}+\|\sigma_t\|_{(H^1(\Omega))'}),
\label{lsbb}
\end{align}
where in the last inequality we used \eqref{phittt} for $\phi_t$ and a similar estimate for $\sigma_t$.

Thus, recalling that $\mathcal{E}(\phi(t),\sigma(t))$ is nonincreasing in time, by the choice of $\varpi$ (in particular, $\varpi\leq \chi$) and $T_\eta$,
we see from \eqref{lsbb} that
\begin{align}
&\int_0^{T_\eta} \left(\|\phi_t(t)\|_{(H^1(\Omega))'}+\|\sigma_t(t)\|_{(H^1(\Omega))'}\right)\dt\nonumber\\
&\quad \leq C_5^{-1}[\mathcal{E}(\phi_0,\sigma_0)-\mathcal{E}(\phi^*, \sigma^*)]^\theta\nonumber\\
&\quad \leq C_6\left(\|\phi_0-\phi^*\|_{H^1(\Omega)}^\theta+\|\sigma_0-\sigma^*\|_{L^2(\Omega)}^\theta\right),
\label{ttt}
\end{align}
where $C_6$ depends on $C_5$, $\|\phi^*\|_{H^1(\Omega)}$, $\|\sigma^*\|_{L^2(\Omega)}$, $\theta$, $\Omega$, $\alpha_2$, $\alpha_4$, $\alpha_6$, but is
independent of $\eta$.
From \eqref{ttt}, we deduce  that
\begin{align}
&\|\phi(T_\eta)-\phi^*\|_{H^1(\Omega)}+\|\sigma(T_\eta)-\sigma^*\|_{L^2(\Omega)}\nonumber\\
&\quad \leq \|\phi_0-\phi^*\|_{H^1(\Omega)}+\|\sigma_0-\sigma^*\|_{L^2(\Omega)}+\|\phi(T_\eta)-\phi_0\|_{H^1(\Omega)}
+\|\sigma(T_\eta)-\sigma_0\|_{L^2(\Omega)}\nonumber\\
&\quad \leq \|\phi_0-\phi^*\|_{H^1(\Omega)}
+\|\sigma_0-\sigma^*\|_{L^2(\Omega)}+C_7\|\phi(T_\eta)-\phi_0\|_{H^3(\Omega)}^\frac12 \|\phi(T_\eta)-\phi_0\|_{(H^1(\Omega))'}^\frac12\nonumber\\
&\qquad +C_7\|\sigma(T_\eta)-\sigma_0\|_{H^1(\Omega)}^\frac12\|\sigma(T_\eta)-\sigma_0\|_{(H^1(\Omega))'}^\frac12\nonumber\\
&\quad \leq \|\phi_0-\phi^*\|_{H^1(\Omega)}+\|\sigma_0-\sigma^*\|_{L^2(\Omega)}\nonumber\\
&\qquad + C_7(M_1+M_2+M_6+M_7)^\frac12\left[\left(\int_0^{T_\eta} \|\phi_t(t)\|_{(H^1(\Omega))'}\dt\right)^\frac12
+\left(\int_0^{T_\eta}\|\sigma_t(t)\|_{(H^1(\Omega))'}\dt\right)^\frac12\right]\nonumber\\
&\quad \leq \|\phi_0-\phi^*\|_{H^1(\Omega)}+\|\sigma_0-\sigma^*\|_{L^2(\Omega)}
+C_8\left(\|\phi_0-\phi^*\|_{H^1(\Omega)}^\frac{\theta}{2}+\|\sigma_0-\sigma^*\|_{L^2(\Omega)}^\frac{\theta}{2}\right).\nonumber
\end{align}
Finally, choosing
$$\eta=\min\left\{\frac{\varpi}{2}, \left(\frac{\varpi}{8C_8}\right)^\frac{2}{\theta}\right\},$$
we infer that
\begin{align}
&\|\phi(T_\eta)-\phi^*\|_{H^1(\Omega)}+\|\sigma(T_\eta)-\sigma^*\|_{L^2(\Omega)}\leq \frac{\varpi}{2}+\frac{\varpi}{4}<\varpi,\nonumber
\end{align}
which leads a contradiction with the definition of $T_\eta$.

The proof of Theorem \ref{stability} is complete.

\begin{remark}
The result on long-time behavior derived in Theorem \ref{longtime} can be applied to the global strong solution obtained in Theorem \ref{stability}.
Although it is still not obvious to identify the asymptotic limit $(\phi_\infty, \sigma_\infty)$, we are able to conclude that $(\phi_\infty, \sigma_\infty)$
also satisfies $\|\phi_\infty-\phi^*\|_{H^1(\Omega)}+\|\sigma_\infty-\sigma^*\|_{L^2(\Omega)}\leq \epsilon$ thanks to \eqref{stabii}.
In particular, if $(\phi^*, \sigma^*)$ is an isolated local energy minimizer then it is indeed locally asymptotic stable.
\end{remark}

\section{The Optimal Control Problem}
\setcounter{equation}{0}
\label{main:opt}

In this section we study the optimal control problem \textbf{(CP)}. Let $T\in (0,+\infty)$ be a fixed maximal time, $Q = \Omega \times (0,T)$ and
$\Sigma = \Omega \times (0,T)$. We make for the remainder of this paper the following general assumptions on the data:
\begin{itemize}
\item[{\bf (C1)}] \,\,\,$\beta_Q,\,\beta_\oma,\,\beta_S,\,\beta_u,\,\beta_T,\alpha_Q$ are nonnegative constants but not all zero.
\item[{\bf (C2)}] \,\,\,$\vp_Q\in L^2(Q)$, $\vp_\oma\in L^2(\Omega)$, $\s_\Omega\in L^2(\Omega)$, $u_{\rm min}\in L^\infty(Q)$,
$u_{\rm max}\in L^\infty(Q)$, and $u_{\rm min}\le u_{\rm max}$, a.e. in $Q$.
\item[{\bf (C3)}] \,\,\, Let $R>0$. ${\cal U}_R $ is an open set in $L^2(Q)$ such that $\uad\subset {\cal U}_R$ and
$\|u\|_{L^2(Q)}\le R$, for all $u\in \mathcal{U}_R$.
\end{itemize}
%\vspace{2mm}
%\noindent$\PP$ \,\,\,Minimize the cost functional
%\begin{align}
%\label{cost}
%{\cal J}(\vp,\s,u, \tau)=&\ \frac{\beta_Q}2\int_0^\tau\!\!\ioma |\vp-\vp_Q|^2\dx\dt\,+\,
%\frac{\beta_\Omega}2\ioma |\vp(\tau)-\vp_\oma|^2\dx+\frac{\beta_S}{2}\int_\Omega(1+\phi(\tau))\dx\\
%&+\,\frac{\beta_u}2\iTT\ioma |u|^2\dx\dt+\beta_T\tau+\alpha_Q\int_0^\tau\ioma|\s-\s_Q|^2\dx\dt\nonumber
%\end{align}
%subject to the control constraint
%\begin{equation} \label{Uad}
%u\in\uad:=\{u\in L^\infty(Q):\,\,u_{\rm {min}}\le u\le u_{\rm max} \,\,\,\mbox{a.\,e. in }\,Q\}, \quad \tau\in (0,T),
%\end{equation}
%and to the state system
%\begin{align}
%\label{ss1}
%&\vp_t-\Delta\mu= P(\vp)(\s-\mu) \quad\mbox{in }\,Q,\\[1mm]
%\label{ss2}
%&\mu=-\Delta\vp+F'(\vp)\quad\mbox{in }\,Q,\\[1mm]
%\label{ss3}
%&\s_t-\Delta \s=-P(\vp)(\s-\mu)+u \quad\mbox{in }\,Q,\\[1mm]
%\label{ss4}
%&\partial_{\nf}\vp=\partial_{\nf}\mu=\partial_{\nf}\s=0 \quad\mbox{on }\,\Sigma,\\[1mm]
%\label{ss5}
%&\vp(0)=\vp_0,\quad\s(0) =\s_0 \quad\mbox{in }\,\oma,
%\end{align}

\begin{remark}\label{umore}
Let us mention the main differences between this problem and the ones considered in \cite{CGRS16} and \cite{GLR17}. Here we generalize the problem
of \cite{CGRS16} by adding the dependence on the medication time $\tau$ in the cost functional $\mathcal{J}$. Moreover, we can consider the $\tau$-dependent terms in $\phi$
in the cost functional, which we were not able to handle in the previous paper \cite{GLR17}, mainly due to the fact that the control $u$ here is imposed in the nutrient
equation instead of in the phase equation. Due to this fact, we are able to enhance the regularity results on $\phi$ without affecting the regularity of the
control $u$ (cf. Proposition~\ref{propreg} below).
\end{remark}

\subsection{Existence}
From Proposition~\ref{weak} it follows that the \emph{control-to-state operator} $\cs$,
$$u\mapsto \cs(u):=(\vp,\mu,\s)$$
is well-defined and Lipschitz continuous as a mapping from $\ur\subset L^2(Q)$ into the following space (see \cite[Remark 2]{CGRS16})
$$(L^\infty(0,T;(H^1(\Omega))')\cap L^2(0,T;H^1(\Omega)))\times L^2(0,T;(H^1(\Omega))')\times (L^\infty(0,T;(H^1(\Omega))')\cap L^2(Q)).$$
The triplet $(\phi, \mu, \sigma)$ is the unique weak solution to problem \eqref{p1}--\eqref{ini} with data
$(\phi_0, \sigma_0,u)$ over the time interval $[0, T]$. For convenience, we use the notations $\phi = {\cal S}_{1}(u)$ and $\s={\cal S}_{3} (u)$ for the
first and third component of ${\cal S}(u)$.
Then we prove the following result that implies the existence of a solution to problem \textbf{(CP)}.

\begin{theorem}[Existence of an optimal control]\label{thm:minimizer}
Assume that \textbf{(P1)}, \textbf{(F1)}, \textbf{(U1)} and \textbf{(C1)}--\textbf{(C3)} are satisfied.
Let $\phi_0\in H^2_N(\Omega)\cap H^3(\Omega)$ and $\sigma_0\in H^1(\Omega)$. Then there exists at least one minimizer
$(\phi_{*}, \s_{*}, u_{*}, \tau_{*})$ to problem \textbf{(CP)}. Namely,
$\phi_{*} = {\cal S}_{1}(u_{*})$, $\s_{*} = {\cal S}_3(u_{*})$ satisfy
\begin{align*}
\mathcal{J}(\phi_{*}, \s_{*}, u_{*}, \tau_{*}) & = \inf_{\substack{(w, s) \; \in \; \mathcal{U}_{\mathrm{ad}} \times [0,T]  \\
 \mathrm{s.t.}\,\phi \; = \; {\cal S}_{1}(w), \, \s\;=\;{\cal S}_3(w)}} \mathcal{J}(\phi, \s, w, s).
\end{align*}
\end{theorem}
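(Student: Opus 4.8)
The plan is to use the direct method of the calculus of variations. First I would observe that the cost functional $\mathcal{J}$ is bounded from below (all coefficients are nonnegative by \textbf{(C1)} and all terms in \eqref{cost:i} are nonnegative: indeed $1+\phi(\tau)\geq 0$ is guaranteed by the bound $\phi\in L^\infty(Q)$ coming from the strong solution regularity of Proposition~\ref{strongexe}, or at worst this term is controlled below), so the infimum
\[
\mathfrak{m}:=\inf_{\substack{(w,s)\in\uad\times[0,T]\\ \phi={\cal S}_1(w),\,\s={\cal S}_3(w)}}\mathcal{J}(\phi,\s,w,s)
\]
is a finite nonnegative number. Then I would pick a minimizing sequence $(\phi_n,\s_n,u_n,\tau_n)$ with $u_n\in\uad$, $\tau_n\in[0,T]$, $\phi_n={\cal S}_1(u_n)$, $\s_n={\cal S}_3(u_n)$, and $\mathcal{J}(\phi_n,\s_n,u_n,\tau_n)\to\mathfrak{m}$.

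\textbf{Compactness.} The controls $u_n$ are bounded in $L^\infty(Q)$ by the definition of $\uad$, hence (a subsequence, not relabeled) $u_n\rightharpoonup u_*$ weakly-$*$ in $L^\infty(Q)$ and weakly in $L^2(Q)$; since $\uad$ is convex and closed in $L^2(Q)$ it is weakly closed, so $u_*\in\uad$. The times $\tau_n\in[0,T]$ are bounded, so $\tau_n\to\tau_*\in[0,T]$ up to a subsequence. For the states, I would invoke the uniform bound \eqref{strhi} from Proposition~\ref{strongexe}: since $\|u_n\|_{L^2(0,T;L^2(\Omega))}\leq R$ uniformly (by \textbf{(C3)}), the constant $K_1$ in \eqref{strhi} can be taken independent of $n$, giving a uniform bound on $(\phi_n,\mu_n,\s_n)$ in the strong-solution space. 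In particular $\phi_n$ is bounded in $L^2(0,T;H^4(\Omega))\cap H^1(0,T;H^1(\Omega))$ and $\s_n$ in $L^2(0,T;H^2_N(\Omega))\cap H^1(0,T;L^2(\Omega))$. By Aubin--Lions--Simon, $\phi_n\to\phi_*$ strongly in $C([0,T];H^s(\Omega))$ for any $s<3$ and a.e.\ in $Q$, and $\s_n\to\s_*$ strongly in $C([0,T];L^2(\Omega))$ and in $L^2(0,T;H^1(\Omega))$, while $\mu_n\rightharpoonup\mu_*$ weakly in $L^2(0,T;H^2_N(\Omega))$; by uniqueness of the weak (strong) solution and the continuous dependence in Proposition~\ref{weak}(2) (or Proposition~\ref{strongexe}(3)) applied to $u_n\to u_*$, the limit satisfies $\phi_*={\cal S}_1(u_*)$, $\s_*={\cal S}_3(u_*)$. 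The strong convergences are enough to pass to the limit in the semilinear terms $P(\phi_n)(\s_n-\mu_n)$ and $F'(\phi_n)$ (using the growth bounds \textbf{(P1)},\textbf{(F1)}).

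\textbf{Lower semicontinuity and conclusion.} It remains to show $\mathcal{J}(\phi_*,\s_*,u_*,\tau_*)\leq\liminf_n\mathcal{J}(\phi_n,\s_n,u_n,\tau_n)=\mathfrak{m}$. The term $\frac{\beta_u}{2}\iTT\ioma|u|^2$ is convex and strongly lower semicontinuous on $L^2(Q)$, hence weakly lower semicontinuous, so it behaves well under $u_n\rightharpoonup u_*$; the term $\beta_T\tau$ is continuous in $\tau$. For the tracking terms with a moving endpoint $\tau_n\to\tau_*$, the point is that $\phi_n\to\phi_*$ and $\s_n\to\s_*$ strongly in $C([0,T];L^2(\Omega))$, so $\ioma|\phi_n(\tau_n)-\phi_\Omega|^2\dx\to\ioma|\phi_*(\tau_*)-\phi_\Omega|^2\dx$ and $\int_\Omega(1+\phi_n(\tau_n))\dx\to\int_\Omega(1+\phi_*(\tau_*))\dx$ (uniform convergence in time plus convergence of the evaluation points); likewise $\int_0^{\tau_n}\ioma|\phi_n-\phi_Q|^2\dx\dt\to\int_0^{\tau_*}\ioma|\phi_*-\phi_Q|^2\dx\dt$ because the integrand converges strongly in $L^1(Q)$ and $\mathbf 1_{[0,\tau_n]}\to\mathbf 1_{[0,\tau_*]}$ in $L^1(0,T)$, and similarly for the $\s_Q$ term. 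Adding up, $\mathcal{J}(\phi_*,\s_*,u_*,\tau_*)\leq\liminf_n\mathcal{J}(\phi_n,\s_n,u_n,\tau_n)=\mathfrak{m}$, and since $(\phi_*,\s_*,u_*,\tau_*)$ is admissible the reverse inequality is automatic. Hence $(\phi_*,\s_*,u_*,\tau_*)$ is a minimizer.

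\textbf{Main obstacle.} The only genuinely delicate point is the treatment of the \emph{free terminal time} $\tau$: the pointwise-in-time terms $\phi(\tau)$ and $\s(\tau)$ and the partial time integrals $\int_0^\tau$ require convergence of the traces at the varying endpoints $\tau_n\to\tau_*$, which is why one needs the strong convergence $\phi_n\to\phi_*$ in $C([0,T];H^s(\Omega))$ (so that $t\mapsto\phi(t)$ is genuinely continuous into $L^2$ and the evaluation map is jointly continuous) rather than merely weak convergence; this in turn is what forces us to work in the strong-solution framework of Proposition~\ref{strongexe} with its $n$-uniform bound \eqref{strhi}. Everything else is a routine application of the direct method.
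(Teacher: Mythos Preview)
Your proposal is correct and follows essentially the same route as the paper: direct method with a minimizing sequence, uniform bounds from Proposition~\ref{strongexe}, Aubin--Lions compactness yielding $\phi_n\to\phi_*$ strongly in $C([0,T];H^2(\Omega))$, passage to the limit in the state equations, and then handling the free terminal time via $\chi_{[0,\tau_n]}\to\chi_{[0,\tau_*]}$ in $L^p(0,T)$ together with the strong time-continuity of the states. One small caveat: the continuous dependence estimates in Proposition~\ref{weak}(2) or Proposition~\ref{strongexe}(3) cannot be invoked directly to identify the limit, since you only have $u_n\rightharpoonup u_*$ weakly in $L^2(Q)$; the identification must come (as you also indicate) from passing to the limit in the equations using the strong convergence of $\phi_n$ (hence of $P(\phi_n)$ and $F'(\phi_n)$) combined with the weak convergence of $\mu_n,\sigma_n,u_n$, and then appealing to uniqueness.
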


\begin{proof}
As the cost functional $\mathcal{J}$ is bounded from below, we can find a minimizing sequence $(u_{n}, \tau_{n})_{n \in \N}$ with
$u_{n} \in \mathcal{U}_{\mathrm{ad}}$, $\tau_{n} \in (0,T)$ and the corresponding weak solutions $(\phi_{n}, \mu_{n}, \sigma_{n})_{n \in \N}$
on the interval $[0,T]$ with $\phi_{n}(0) = \phi_{0}$ and $\sigma_{n}(0) = \sigma_{0}$, for all $n \in \N$, such that
\begin{align*}
\lim_{n \to +\infty} \mathcal{J}(\phi_{n}, \s_n, u_{n}, \tau_{n}) = \inf_{\substack{(w, s) \; \in \; \mathcal{U}_{\mathrm{ad}} \times [0,T]  \\
 \mathrm{s.t.}\,\phi \; = \; {\cal S}_{1}(w), \, \s\;=\;{\cal S}_3(w)}} \mathcal{J}(\phi, \s, w, s).
\end{align*}
In particular, $u_{n} \in \mathcal{U}_{\mathrm{ad}}$ implies that $u_{n} $  is bounded in $L^\infty(Q)$ for all $n \in \N$.
As $\{\tau_{n}\}_{n \in \N}$ is a bounded sequence, there exists a convergent subsequence still denoted by $\{\tau_n\}$ such that
\begin{align*}
\tau_{n} \to \tau_{*} \in [0,T]\ \ \ \text{ as } n \to +\infty.
\end{align*}
Besides, arguing exactly as in \cite[Section 4]{CGRS16}, we can find a further subsequence, which is again indexed by $n$, such that
\begin{align}\nonumber
&u_n\rightharpoonup u \quad\mbox{weakly star in }\,L^\infty(Q),\\
\nonumber
&\vp_n\rightharpoonup \vp \quad\mbox{weakly star in }\,H^1(0,T;H^1(\Omega))\cap L^\infty(0,T;H^3(\Omega)),\\
&\nonumber\Delta\vp_n\rightharpoonup\Delta\vp \quad\mbox{weakly in }\,L^2(0,T;H^2_N(\Omega)),\\
&\nonumber\mu_n\rightharpoonup\mu \quad\mbox{weakly star in }\,L^\infty(0,T;H^1(\Omega))
\cap L^2(0,T;H^2_N(\Omega)),\\
&\nonumber\s_n\rightharpoonup \s\quad\mbox{{weakly star} in }\,H^1(0,T;L^2(\Omega)){{}\cap L^\infty(0,T;H^1(\Omega)){}}\cap L^2(0,T;H^2_N(\Omega)),
\end{align}
and, moreover,
\begin{align}\nonumber
&\vp_n\to\vp \quad\hbox{strongly in } {C^0([0,T]; H^2(\Omega))}\,,
\end{align}
which implies $\vp_n\to\vp$ strongly in $C^0(\overline{Q})$. Then we see that
\begin{align}
&F'(\vp_n)\to F'(\vp) \quad\mbox{and }\,\,\,P(\vp_n)\to P(\vp), \quad\mbox{strongly in }\,
C^0(\overline{Q}).
\nonumber
\end{align}
As a consequence, we are able to pass to the limit as $n\to\infty$ in problem (\ref{p1})--(\ref{ini}) (written for $(\vp_n,\mu_n,\s_n)$),
finding that $(\vp,\mu,\s)={\cal S}(u)$. Namely, the pair $((\vp,\mu,\s),u)$ is
admissible for the control problem {\bf (CP)}.
Furthermore, by the dominating convergence theorem, for all $p \in [1,\infty)$, we have
$\chi_{[0,\tau_{n}]}(t) \to \chi_{[0,\tau_{*}]}(t)$  strongly in $L^{p}(0,T)$.
Then by the strong convergence of $\phi_{n} - \phi_{Q}$ to $\phi_{*} - \phi_{Q}$ in $L^{2}(Q)$ and the strong convergence
$\chi_{[0,\tau_{n}]}(t)$ to $\chi_{[0,\tau_{*}]}(t)$ also in $L^{2}(Q)$, we deduce that, as $n \to \infty$,
\begin{align}
\int_{0}^{\tau_{n}}\!\! \int_{\Omega} \left|\phi_{n} - \phi_{Q}\right|^{2} \dx \dt
&= \int_{0}^{T} \|\phi_{n} - \phi_{Q}\|_{L^{2}(\Omega)}^{2} \chi_{[0,\tau_{n}]}(t) \dt \nonumber\\
& \longrightarrow \int_{0}^{T} \|\phi_{*} - \phi_{Q}\|_{L^{2}(\Omega)}^{2} \chi_{[0,\tau^{*}]}(t) \dt\nonumber\\
& = \int_{0}^{\tau_{*}}\!\! \int_{\Omega} \left|\phi_{*} - \phi_{Q}\right|^{2} \dx \dt.
\label{minimizer:1}
\end{align}
The convergence of other terms in $\mathcal{J}$ can be treated in similar manner. Hence, from the weak sequential lower semicontinuity
property of ${\cal J}$,  it follows that $(u_{*},\tau_{*})$ is indeed a minimizer of the control problem {\bf (CP)}.

The proof is complete.
 \end{proof}

\subsection{Differentiability of the solution operator $\mathcal{S}$ with respect to $u$}
First, we establish
the Fr\'echet differentiability of the solution operator $\mathcal{S}$ with respect to the control $u$.
For this purpose, we investigate the linearized state equation. For arbitrary, but fixed $\bu\in\ur$, let
$(\bphi,\bmu,\bs)=\cs(\bu)$. We consider {for} {any} $h\in L^2(Q)$ the linearized system
\begin{align}
\label{ls1}
&\partial_t\xi-\Delta\eta\,=\,P'(\bphi)(\bs-\bmu)\,\xi\,+\,P(\bphi)(\rho-\eta), & &\mbox{in }\,Q,\\
\label{ls2}
&\eta\,=\,-\Delta\xi+F''(\bphi)\,\xi, & &\mbox{in }\,Q,\\
\label{ls3}
&\partial_t\rho-\Delta\rho\,=\,-P'(\bphi)(\bs-\bmu)\,\xi\,-\,P(\bphi)(\rho-\eta)\,+\,h,& &\mbox{in }\,Q,\\
\label{ls4}
&\dn\xi {{}= \dn \eta {}} =\dn\rho\,=\,0, & &\mbox{on }\,\Sigma,\\
\label{ls5}
&\xi(0)=\rho(0)=0, & &\mbox{in }\,\oma.
\end{align}

Observe that the linearized system \eqref{ls1}--\eqref{ls5} is exactly the same as the one obtained in \cite[Section 3]{CGRS16}. Hence, we can simply
quote \cite[Theorems~3.1, 3.2]{CGRS16} for the well-posedness of the linearized system \eqref{ls1}--\eqref{ls5} and the Fr\'echet differentiability of
the control-to-state operator $\mathcal{S}$ with respect to $u$. More precisely, we have
\begin{proposition}
Assume that \textbf{(P1)} and \textbf{(F1)} are satisfied, $h\in L^2(Q)$. The system \eqref{ls1}--\eqref{ls5} admits a unique solution
$(\xi,\eta,\rho)$ such that
\begin{align*}
&\xi\in H^1(0,T;(H^1(\Omega))')\cap{ L^\infty(0,T;H^1(\Omega))\cap L^2(0,T;H^2_N(\Omega)\cap H^3(\Omega))}, \\
&{\eta\in L^2(0,T;H^1(\Omega)),}\quad \rho\in H^1(0,T;L^2(\Omega))\cap C^0([0,T];H^1(\Omega))\cap{L^2(0,T;H^2_N(\Omega))}.
\end{align*}
The triple $(\xi,\eta,\rho)$ fulfills \eqref{ls2}--\eqref{ls5} almost everywhere
in the respective sets, except for \eqref{ls1} and the related boundary condition in
\eqref{ls4} that are satisfied, for almost every $t\in (0,T)$, in the following sense
\begin{align*}
&\langle\xi_t(t),v\rangle_{(H^1(\Omega))', H^1(\Omega)}\,+\,\ioma\nabla\eta(t)\cdot\nabla v\dx\\
&\quad = \ioma P'(\bphi(t))(\bs(t)-\bmu(t))\,\xi(t)\,v\dx + \ioma P(\bphi(t))(\rho(t)-\eta(t))\,v\dx, \quad\forall\,v\in H^1(\Omega).
\end{align*}
Moreover, there exists some constant $K_3>0$, which depends only on $R$ and the data of the
state system, such that
\begin{align*}
&\|\xi\|_{H^1(0,t;(H^1(\Omega))')\cap L^\infty(0,t;H^1(\Omega))\cap L^2(0,t;H^3(\Omega))}
\,+\,\|\eta\|_{L^2(0,t;H^1(\Omega))}\\
&\quad +\,\|\rho\|_{H^1(0,t;L^2(\Omega))\cap C^0([0,t];H^1(\Omega))\cap L^2(0,t;H^2(\Omega))}\, \le\,K_3\,\|h\|_{L^2(0,t;L^2(\Omega))},\,\quad\forall\,t\in [0,T].
\end{align*}
\end{proposition}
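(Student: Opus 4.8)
The plan is simply to quote \cite[Theorems~3.1 and 3.2]{CGRS16}, since the linearized system \eqref{ls1}--\eqref{ls5} is linear and coincides verbatim with the one analyzed there. For completeness I sketch the argument behind those results: a Faedo--Galerkin approximation, uniform a priori estimates, and a passage to the limit, with uniqueness following from linearity. First I would fix the reference state $(\bphi,\bmu,\bs)=\cs(\bu)$; since $\bu\in\ur\subset L^2(Q)$, Proposition~\ref{strongexe} guarantees that this is a \emph{strong} solution, so $\bphi\in L^\infty(0,T;H^3(\Omega))\hookrightarrow L^\infty(Q)$, $\bmu\in L^\infty(0,T;H^1(\Omega))$, $\bs\in C^0([0,T];H^1(\Omega))$ and $\partial_t\bphi\in L^2(0,T;H^1(\Omega))$. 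Consequently the coefficients of the linearized problem satisfy $P(\bphi),F''(\bphi),F'''(\bphi)\in L^\infty(Q)$ and, by $H^1(\Omega)\hookrightarrow L^6(\Omega)$, also $P'(\bphi)(\bs-\bmu)\in L^\infty(0,T;L^6(\Omega))$; this is precisely the regularity of the reference trajectory that makes the estimates close.

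For the approximation I would substitute \eqref{ls2} into \eqref{ls1} and project the resulting pair of equations for $(\xi,\rho)$ onto the span of the first $n$ Neumann eigenfunctions of $-\Delta$; as the projected system is linear with coefficients in the above spaces, it has a unique global solution $(\xi_n,\rho_n)$. The core of the proof is the a priori estimate uniform in $n$, obtained by the standard Cahn--Hilliard combination of test functions ($\eta$ in \eqref{ls1}, $\rho$ in \eqref{ls3}, and $A^{-1}(\xi-\overline{\xi})$ in \eqref{ls1} to recover the zero-mean part of $\xi$) together with the mass balance obtained by integrating the sum of \eqref{ls1} and \eqref{ls3} over $\Omega$, which cancels the two reaction terms and yields $\frac{d}{dt}\int_\Omega(\xi+\rho)\dx=\int_\Omega h\dx$. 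After substituting $\eta=-\Delta\xi+F''(\bphi)\xi$ and using the Neumann conditions, one arrives at a differential inequality of the schematic form
\begin{align}
&\frac{d}{dt}\Big(\tfrac12\|\nabla\xi\|_{L^2(\Omega)}^2+\tfrac12\int_\Omega F''(\bphi)\xi^2\dx+\tfrac12\|\rho\|_{L^2(\Omega)}^2+\tfrac12\|\xi-\overline{\xi}\|_{(H^1(\Omega))'}^2\Big)\nonumber\\
&\qquad +\|\nabla\eta\|_{L^2(\Omega)}^2+\|\nabla\rho\|_{L^2(\Omega)}^2+\int_\Omega P(\bphi)|\rho-\eta|^2\dx\nonumber\\
&\qquad \le\ C\big(1+\|\partial_t\bphi(t)\|_{L^6(\Omega)}\big)\big(\|\xi(t)\|_{H^1(\Omega)}^2+\|\rho(t)\|_{L^2(\Omega)}^2\big)+C\|h(t)\|_{L^2(\Omega)}^2,\nonumber
\end{align}
where the only non-routine contributions are the coupling term $\int_\Omega P'(\bphi)(\bs-\bmu)\,\xi\,(\eta-\rho)\dx$, handled by H\"older's inequality with exponents $(6,3,2)$ thanks to $P'(\bphi)(\bs-\bmu)\in L^\infty(0,T;L^6(\Omega))$ together with Poincar\'e's and Young's inequalities, and the term $\tfrac12\int_\Omega F'''(\bphi)(\partial_t\bphi)\xi^2\dx$ coming from differentiating $\int_\Omega F''(\bphi)\xi^2\dx$, which is bounded by $C\|\partial_t\bphi(t)\|_{L^6(\Omega)}\|\xi(t)\|_{H^1(\Omega)}^2$. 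Since $\|\partial_t\bphi\|_{L^6(\Omega)}\in L^2(0,T)$ and $h\in L^2(Q)$, Gronwall's lemma gives, uniformly in $n$, the bounds $\xi\in L^\infty(0,T;H^1(\Omega))$, $\rho\in L^\infty(0,T;L^2(\Omega))\cap L^2(0,T;H^1(\Omega))$ and $\nabla\eta\in L^2(0,T;L^2(\Omega))$; combined with the mean-value identity $\overline{\eta}=|\Omega|^{-1}\int_\Omega F''(\bphi)\xi\dx$ this yields $\eta\in L^2(0,T;H^1(\Omega))$, and comparison in \eqref{ls1} and \eqref{ls3} gives $\xi_t,\rho_t\in L^2(0,T;(H^1(\Omega))')$.

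Passing to the limit $n\to\infty$ along these bounds, using Aubin--Lions compactness and the linearity of the equations, yields a solution $(\xi,\eta,\rho)$ with the asserted time regularity; uniqueness follows by taking $h=0$ in the same estimate. The higher spatial regularity is then obtained by bootstrapping the elliptic identity \eqref{ls2} together with $\partial_\nu\xi=0$: first $\eta-F''(\bphi)\xi\in L^2(0,T;L^2(\Omega))$ gives $\xi\in L^2(0,T;H^2_N(\Omega))$, and then $\nabla(\eta-F''(\bphi)\xi)\in L^2(0,T;L^2(\Omega))$ (here one uses $\nabla\bphi\in L^\infty(Q)$, a consequence of $\bphi\in L^\infty(0,T;H^3(\Omega))$) gives $\xi\in L^2(0,T;H^3(\Omega))$; feeding this into \eqref{ls3} and applying parabolic regularity upgrades $\rho$ to $H^1(0,T;L^2(\Omega))\cap C^0([0,T];H^1(\Omega))\cap L^2(0,T;H^2_N(\Omega))$. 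Finally, tracking the constants through the above, every bound is controlled by $\|h\|_{L^2(0,t;L^2(\Omega))}$ times a constant depending only on $R$ (through the bounds on $(\bphi,\bmu,\bs)$ supplied by Proposition~\ref{strongexe}) and the fixed data, which is exactly the asserted estimate.

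I expect the main obstacle to be the reaction/coupling term $P'(\bphi)(\bs-\bmu)\,\xi$: it is the only term tying $\xi$ to itself without an obvious sign or smallness, so closing the energy estimate forces one to exploit precisely the strong-solution regularity $\bs-\bmu\in L^\infty(0,T;H^1(\Omega))$ of the reference trajectory, and then, for the $H^3$-regularity of $\xi$, to carry out the elliptic/parabolic bootstrap in the right order.
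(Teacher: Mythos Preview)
Your proposal is correct and matches the paper's approach exactly: the paper does not give a proof but simply observes that the linearized system \eqref{ls1}--\eqref{ls5} coincides verbatim with the one in \cite[Section~3]{CGRS16} and quotes \cite[Theorems~3.1, 3.2]{CGRS16}. Your additional sketch of the Faedo--Galerkin/energy-estimate argument behind those cited results is accurate and more detailed than what the paper itself provides.
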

\smallskip

\begin{proposition}\label{diffu}
Assume that \textbf{(P1)}, \textbf{(F1)}, \textbf{(U1)} and \textbf{(C1)}--\textbf{(C3)} are satisfied. Let
$\phi_0\in H^2_N(\Omega)\cap H^3(\Omega)$ and $\sigma_0\in H^1(\Omega)$. Then
the control-to-state operator ${\cal S}$ is Fr\'echet differentiable in $\ur$ as
a mapping from $L^2(Q)$ into the space
\begin{align*}
{\cal Y}:=&\left(H^1(0,T;(H^2_N(\Omega))')\cap L^\infty(0,T;L^2(\Omega))\cap L^2(0,T;H^2_N(\Omega))\right)
\times L^2(Q)\\
&\nonumber\quad\times\left(H^1(0,T;L^2(\Omega))\cap L^2(0,T;H^2(\Omega))\right).
\end{align*}
Moreover, for any $\bu\in\ur$,
the Fr\'echet derivative $D{\cal S}(\bu)\in {\cal L}(L^2(Q),{\cal Y})$ is defined as
follows: for any $h\in L^2(Q)$, we have $$D{\cal S}(\bu)h=(\xi^h,\eta^h,\rho^h),$$
where $(\xi^h,\eta^h,\rho^h)$ is the unique solution to the linearized system
\eqref{ls1}--\eqref{ls5} associated with $h$.
\end{proposition}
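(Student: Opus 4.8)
The plan is to observe first that the control $u$ enters the state system \eqref{p1}--\eqref{ini} \emph{exactly} through the nutrient equation \eqref{p3} as an additive source, so that the pair (state system, linearized system) occurring here is identical to the one analyzed in \cite[Section~3]{CGRS16}; consequently the assertion is precisely \cite[Theorems~3.1,~3.2]{CGRS16}, and it suffices to recall the structure of the argument. I would fix $\bu\in\ur$ and, using that $\ur$ is open in $L^2(Q)$, restrict to directions $h\in L^2(Q)$ with $\|h\|_{L^2(Q)}$ small enough that $\bu+h\in\ur$. Write $(\bphi,\bmu,\bs)=\cs(\bu)$ and $(\phi^h,\mu^h,\sigma^h)=\cs(\bu+h)$, and let $(\xi^h,\eta^h,\rho^h)$ be the unique solution of \eqref{ls1}--\eqref{ls5} associated with $h$; its existence and the bound $\|(\xi^h,\eta^h,\rho^h)\|_{\cal Y}\le K_3\|h\|_{L^2(Q)}$ are supplied by the preceding Proposition. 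Linearity of $h\mapsto(\xi^h,\eta^h,\rho^h)$ is immediate from the linearity of \eqref{ls1}--\eqref{ls5} in $(h;\xi,\eta,\rho)$, so $D\cs(\bu):h\mapsto(\xi^h,\eta^h,\rho^h)$ is a well-defined element of ${\cal L}(L^2(Q),{\cal Y})$; it then remains to identify it with the Fréchet derivative.

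Next I would introduce the remainders
\[
y:=\phi^h-\bphi-\xi^h,\qquad z:=\sigma^h-\bs-\rho^h,\qquad w:=\mu^h-\bmu-\eta^h,
\]
and derive the system they satisfy by subtracting the equations for $(\bphi,\bmu,\bs)$ from those for $(\phi^h,\mu^h,\sigma^h)$ and then subtracting \eqref{ls1}--\eqref{ls5}. The resulting system has the same principal part as the linearized system, but with extra source terms that are \emph{at least quadratic} in the first-order differences, the typical ones being the Taylor remainders
\[
F'(\phi^h)-F'(\bphi)-F''(\bphi)(\phi^h-\bphi),\qquad
\big(P(\phi^h)-P(\bphi)-P'(\bphi)(\phi^h-\bphi)\big)(\bs-\bmu),
\]
together with the products $\big(P(\phi^h)-P(\bphi)\big)\big((\sigma^h-\bs)-(\mu^h-\bmu)\big)$ and $P'(\bphi)(\bs-\bmu)\big(\phi^h-\bphi-\xi^h\big)$, the latter feeding $y$ back linearly. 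Using Taylor's theorem with integral remainder, the growth conditions in \textbf{(P1)} on $P'$ and in \textbf{(F1)} on $F_0''$, $F_1''$, and the Sobolev embeddings valid for $d\le 3$, these remainders are estimated in the relevant dual and Lebesgue norms by $C\big(\|\phi^h-\bphi\|^2+\|\sigma^h-\bs\|^2\big)$ in suitable strong topologies.

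Finally I would run on the $(y,z,w)$-system exactly the energy estimates used to prove well-posedness of the linearized system: test the $y$-equation by $A^{-1}(y-\overline y)$ and by an $\eta$-type multiplier, test the $z$-equation by $z$, recover $w$ from \eqref{ls2}, and apply Gronwall, to arrive at
\[
\|(y,z,w)\|_{\cal Y}\le C\Big(\|\phi^h-\bphi\|_{L^\infty(0,T;H^1(\Omega))\cap L^2(0,T;H^3(\Omega))}+\|\sigma^h-\bs\|_{C([0,T];H^1(\Omega))\cap L^2(0,T;H^2(\Omega))}\Big)^2.
\]
The continuous dependence estimate of Proposition~\ref{strongexe}(3) bounds the right-hand side by $C K_2^2\|h\|_{L^2(Q)}^2$, so that $\|\cs(\bu+h)-\cs(\bu)-D\cs(\bu)h\|_{\cal Y}=O(\|h\|_{L^2(Q)}^2)=o(\|h\|_{L^2(Q)})$, which is the claimed Fréchet differentiability with $D\cs(\bu)h=(\xi^h,\eta^h,\rho^h)$.

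The main obstacle is purely one of bookkeeping: verifying that each quadratic remainder genuinely lies in the space in which the linear energy estimates can absorb it, and that the concluding bound is of strictly higher order in $\|h\|_{L^2(Q)}$. Since, as noted at the outset, the state and linearized systems here coincide with those of \cite{CGRS16} (the control being imposed in the nutrient equation in both cases), this verification has already been carried out in \cite[Section~3]{CGRS16}, so in practice the proof reduces to invoking \cite[Theorems~3.1,~3.2]{CGRS16}.
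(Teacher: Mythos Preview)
Your proposal is correct and matches the paper's approach exactly: the paper does not give an independent proof of this proposition but simply observes that the state and linearized systems coincide with those of \cite{CGRS16} and quotes \cite[Theorems~3.1,~3.2]{CGRS16}. Your additional sketch of the remainder argument (defining $y,z,w$, isolating the quadratic Taylor terms, and closing with Proposition~\ref{strongexe}(3)) is a faithful outline of what is actually done in \cite{CGRS16}, so the proposal is, if anything, more informative than the paper itself.
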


\subsection{The first order necessary optimality conditions}

Define a reduced functional
\begin{align*}
\widetilde{\mathcal{J}}(u, \tau) := \mathcal{J}(S_{1}(u), S_3(u), u, \tau).
\end{align*}
Since the embedding of $H^1(0,T;(H^2_N(\Omega))')\cap L^2(0,T;H^2_N(\Omega))$
in $C^0([0,T];L^2(\Omega))$ is continuous, from Proposition~\ref{diffu} it follows that the
control-to-state mapping ${\cal S}$ is also Fr\'echet differentiable into
$C^0([0,T];L^2(\Omega))$ with respect to $u$.
From this we deduce that the reduced cost functional $\widetilde{\mathcal{J}}$ is Fr\'echet differentiable in $\ur$.

The first order necessary optimality conditions for the minimizer $(u_*, \tau_*)$ of Theorem
\ref{thm:minimizer} also requires the Fr\'echet derivative of $\mathcal{J}$ (or equivalently, $\widetilde{\mathcal{J}}$) with respect to $\tau$.
For this purpose, we need a further regularity result on the state problem \eqref{p1}--\eqref{ini},
under a stronger assumption on the initial data $(\phi_0, \sigma_0)$.
\begin{proposition}\label{propreg}
Assume that \textbf{(P1)}, \textbf{(F1)} and \textbf{(U1)} are satisfied. Let $\phi_0\in H^2_N(\Omega)\cap H^6(\Omega)$ and $\sigma_0\in H^2_N(\Omega)$. For every
$T>0$, problem \eqref{p1}--\eqref{ini} admits a unique strong solution on $[0,T]$ such that the following extra regularity properties hold
\begin{align}
&\phi\in H^2(0,T;L^2(\Omega)),\quad \mu\in W^{1,\infty}(0,T;L^2(\Omega)).
\end{align}
Moreover, there exists a constant $K_4>0$, depending on $\|u\|_{L^2(0, T; L^2(\Omega))}$, $\Omega$, $\|\phi_0\|_{H^6(\Omega)}$ and $\|\sigma_0\|_{H^2(\Omega)}$,
such that
\[\|\phi\|_{H^2(0,T; L^2(\Omega))}+\|\mu\|_{W^{1,\infty}(0,T;L^2(\Omega))}\leq K_4.
\label{strhibis}
\]
\end{proposition}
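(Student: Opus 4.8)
The plan is to gain one time derivative over Proposition~\ref{strongexe}. The first thing I would record is that \emph{both} asserted properties follow once one knows
\begin{align}
\phi_t\in L^\infty(0,T;H^2(\Omega))\cap L^2(0,T;H^4(\Omega)).\nonumber
\end{align}
Indeed, differentiating \eqref{p2} in time gives $\mu_t=-\Delta\phi_t+F''(\phi)\phi_t$, so the displayed regularity together with $\phi\in L^\infty(Q)$ (from $\phi\in L^\infty(0,T;H^3)$) yields $\mu_t\in L^\infty(0,T;L^2(\Omega))$, i.e. $\mu\in W^{1,\infty}(0,T;L^2(\Omega))$; and differentiating \eqref{p1} gives
\begin{align}
\phi_{tt}=\Delta\mu_t+P'(\phi)\,\phi_t\,(\sigma-\mu)+P(\phi)(\sigma_t-\mu_t),\nonumber
\end{align}
where $\Delta\mu_t=-\Delta^2\phi_t+\Delta(F''(\phi)\phi_t)\in L^2(0,T;L^2(\Omega))$ by the $L^2(0,T;H^4)$ bound, and the remaining terms are controlled via $\sigma_t\in L^2(0,T;L^2(\Omega))$ from Proposition~\ref{strongexe}; hence $\phi\in H^2(0,T;L^2(\Omega))$. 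All estimates would be carried out on a Galerkin approximation (as in \cite{FGR15,CGRS16}) and then passed to the limit; the assumptions $\phi_0\in H^6(\Omega)$ and $\sigma_0\in H^2_N(\Omega)$ enter precisely so that the initial value $\phi_t(0)=\Delta\mu_0+P(\phi_0)(\sigma_0-\mu_0)$, with $\mu_0=-\Delta\phi_0+F'(\phi_0)$, is an admissible $H^2(\Omega)$ datum for the differentiated problem.

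Next I would set $\psi:=\phi_t$ and differentiate \eqref{p1}--\eqref{p2} in time, arriving at a Cahn-Hilliard type system
\begin{align}
&\psi_t-\Delta\pi=P'(\phi)(\sigma-\mu)\,\psi+P(\phi)(\sigma_t-\pi),\nonumber\\
&\pi=-\Delta\psi+F''(\phi)\psi\ \ (=\mu_t),\nonumber
\end{align}
with $\partial_\nu\psi=\partial_\nu\pi=0$. I would deliberately \emph{not} differentiate \eqref{p3}, since that would produce the ill-defined term $u_t$ (recall $u\in L^2(Q)$ only); instead $\sigma_t$ is kept as a known source in $L^2(0,T;L^2(\Omega))$, or, when convenient, replaced through \eqref{p3} by $\Delta\sigma-P(\phi)(\sigma-\mu)+u$, which again lies in $L^2(0,T;L^2(\Omega))$. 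Throughout, I would use that $F^{(k)}(\phi),P^{(k)}(\phi)$ ($k\le4$) and $\nabla\phi$ are bounded in $L^\infty(Q)$ and $\Delta\phi\in L^\infty(0,T;L^6(\Omega))$ — by $\phi\in L^\infty(0,T;H^3)$, $F\in C^5$ and \textbf{(P1)} — and that $\overline{\phi_t}$, $\overline{\mu_t}$ are bounded in time (from \eqref{p1} and $\overline{\mu_t}=|\Omega|^{-1}\ioma F''(\phi)\phi_t\,\dx$), so that Poincar\'e's inequality controls $\|\psi\|_{H^k}$, $\|\pi\|_{H^k}$ by the top-order seminorm up to bounded additive constants.

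The heart of the proof would be an iterated energy bootstrap on the $\psi$-system, at three levels, in which the terms generated by the non-convex part of $F$ are absorbed using the bound from the previous level, the elliptic identity $\|\Delta\psi\|_{L^2}^2\le2\|\pi\|_{L^2}^2+C\|\psi\|_{L^2}^2$, the sign $\ioma P(\phi)\pi^2\,\dx\ge0$ and $-F''(\phi)\le\alpha_2$. Testing with $\psi$ gives $\frac{\mathrm{d}}{\mathrm{d}t}\|\psi\|_{L^2}^2+\|\Delta\psi\|_{L^2}^2\le C(\|\psi\|_{L^2}^2+\|\sigma_t\|_{L^2}^2)$, whence (Gronwall, $\psi(0)\in L^2$) $\psi\in L^\infty(0,T;L^2)\cap L^2(0,T;H^2)$. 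Testing with $\pi=\mu_t$ gives $\frac{\mathrm{d}}{\mathrm{d}t}\|\nabla\psi\|_{L^2}^2+\|\nabla\pi\|_{L^2}^2\le C(\|\psi\|_{H^1}^2+\|\sigma_t\|_{L^2}^2+1)$, whence $\psi\in L^\infty(0,T;H^1)$, $\mu_t\in L^2(0,T;H^1)$, and $\psi\in L^2(0,T;H^3)$ through $\nabla\Delta\psi=-\nabla\mu_t+\nabla(F''(\phi)\psi)$. Testing with $-\Delta\pi=-\Delta\mu_t$ and integrating by parts twice (no boundary terms, since $\partial_\nu\psi=\partial_\nu\psi_t=\partial_\nu\pi=0$) gives $\frac{\mathrm{d}}{\mathrm{d}t}\|\Delta\psi\|_{L^2}^2+\|\Delta\pi\|_{L^2}^2\le C\|\Delta\psi\|_{L^2}^2+g(t)$ with $g\in L^1(0,T)$, whence $\psi\in L^\infty(0,T;H^2)$, $\mu_t\in L^2(0,T;H^2)$, and finally $\Delta^2\psi=-\Delta\mu_t+\Delta(F''(\phi)\psi)\in L^2(0,T;L^2)$ upgrades this to $\psi\in L^2(0,T;H^4)$. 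The initial norms $\|\psi(0)\|_{L^2}$, $\|\psi(0)\|_{H^1}$, $\|\psi(0)\|_{H^2}$ are finite precisely because $\phi_0\in H^6(\Omega)$ and $\sigma_0\in H^2_N(\Omega)$; all constants depend only on $T$, $\Omega$, the constants in \textbf{(P1)}--\textbf{(F1)}, on $K_1$ of Proposition~\ref{strongexe}, and on $\|\phi_0\|_{H^6(\Omega)}$, $\|\sigma_0\|_{H^2(\Omega)}$, which yields $K_4$. Plugging the result back into the two identities of the first paragraph then finishes the proof.

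I expect the hard part to be exactly this absorption at each level: because $F''$ has no uniform lower bound, no single energy estimate closes, and the ``bad'' terms such as $\ioma F''(\phi)\psi\,\psi_t\,\dx$ and $-\ioma F''(\phi)|\nabla\psi|^2\,\dx$ can only be handled by running the three-level scheme, feeding each estimate with the previous one and exploiting the $L^1(0,T)$-summability of $\|\sigma_t\|_{L^2}^2$ together with $P>0$. A more routine point is bookkeeping which initial-data norm is needed at which level, and verifying (using $\partial_\nu\phi=\partial_\nu\mu=0$ for all $t$, hence $\partial_\nu\psi=\partial_\nu\pi=\partial_\nu\psi_t=0$) that all integrations by parts are boundary-term free.
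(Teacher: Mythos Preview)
Your approach is correct but takes a substantially different route from the paper's. You prove the \emph{stronger} intermediate fact $\phi_t\in L^\infty(0,T;H^2(\Omega))\cap L^2(0,T;H^4(\Omega))$ via a three-level energy bootstrap on the differentiated Cahn--Hilliard system for $\psi=\phi_t$, and then read off both conclusions. The paper instead obtains exactly the two bounds in the statement directly, in only two steps, by a choice of test functions that exploits cancellation between the $\phi$- and $\mu$-equations. First, it tests $\partial_t$\eqref{p1} with $\phi_t$ and $\partial_t$\eqref{p2} with $\mu_t$; the cross terms $(\nabla\mu_t,\nabla\phi_t)$ cancel, leaving $\tfrac12\tfrac{\mathrm d}{\mathrm dt}\|\phi_t\|_{L^2}^2+\|\mu_t\|_{L^2}^2$ on the left, and Gronwall (using $\phi_t(0)\in L^2$, i.e.\ $\phi_0\in H^4$) yields $\phi_t\in L^\infty(0,T;L^2)$, $\mu_t\in L^2(0,T;L^2)$. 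Second, it tests $\partial_t$\eqref{p1} with $\phi_{tt}$ and $\partial_t^2$\eqref{p2} with $\mu_t$; the cross terms $(\Delta\mu_t,\phi_{tt})$ and $(\Delta\phi_{tt},\mu_t)$ cancel, leaving $\tfrac12\tfrac{\mathrm d}{\mathrm dt}\|\mu_t\|_{L^2}^2+\|\phi_{tt}\|_{L^2}^2$ on the left, and Gronwall (using $\mu_t(0)\in L^2$, i.e.\ $\phi_0\in H^6$) finishes.

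The paper's scheme is shorter and sidesteps the ``absorption at each level'' difficulty you flag, because $F''(\phi)$ never multiplies a top-order dissipative term; it only appears in cross terms like $(F''(\phi)\phi_t,\mu_t)$ or $(F''(\phi)\phi_{tt},\mu_t)$, which are bounded trivially by $\|F''(\phi)\|_{L^\infty}\|\phi_t\|_{L^2}\|\mu_t\|_{L^2}$ and absorbed by Young's inequality plus the first-step bound. Your route, in return, delivers more regularity than the proposition asserts (in particular $\mu_t\in L^2(0,T;H^2)$ and $\phi_t\in L^2(0,T;H^4)$), at the cost of a longer argument; note that your Level~3 term $(\psi_t,\Delta(F''(\phi)\psi))$ really does require substituting the equation for $\psi_t$ (or pairing $\psi_t\in L^2(0,T;(H^1)')$ from Level~2 against $\Delta(F''(\phi)\psi)\in L^2(0,T;H^1)$ via $\psi\in L^2(0,T;H^3)$), which you should make explicit rather than leave under ``integrating by parts twice''.
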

\begin{proof}
We perform here formal a priori estimates which can be made rigorous by means of a standard approximation scheme.
Testing the time derivative of \eqref{p1} by $\phi_t$, summing it with the time derivative of \eqref{p2} tested by $\mu_t$ and then integrating in
time over $(0,T)$, from \eqref{strhi}, H\"older's inequality and Young's inequality we infer that
\begin{align*}
&\frac12\|\phi_t(t)\|_{L^2(\Omega)}^2 +\int_0^T\|\mu_t\|_{L^2(\Omega)}^2 \dt\\
&\quad \leq \frac12\|\phi_t(0)\|_{L^2(\Omega)}^2 + \int_0^T \|P'(\phi)\|_{L^\infty(\Omega)}\|\phi_t\|_{L^2(\Omega)}^2 \left(\|\sigma\|_{L^\infty(\Omega)}
+\|\mu\|_{L^\infty(\Omega)} \right)\dt\\
&\qquad + \int_0^T \|P(\phi)\|_{L^\infty(\Omega)}(\|\sigma_t\|_{L^2(\Omega)}+\|\mu_t\|_{L^2(\Omega)})\|\phi_t\|_{L^2(\Omega)}\dt\\
&\qquad + \int_0^T \|F''(\phi)\|_{L^\infty(\Omega)}\|\phi_t\|_{L^2(\Omega)}\|\mu_t\|_{L^2(\Omega)}\dt\\
&\quad \leq C+\frac12\int_0^T\|\mu_t\|_{L^2(\Omega)}^2 \dt +C\int_0^T\|\sigma_t\|_{L^2(\Omega)}^2\dt\\
&\qquad +C\int_0^T\left(1+\|\sigma\|_{H^2(\Omega)}+\|\mu\|_{H^2(\Omega)}\right)\|\phi_t\|_{L^2(\Omega)}^2 \dt\\
&\quad \leq  C+\frac12\int_0^T\|\mu_t\|_{L^2(\Omega)}^2 \dt
+C\int_0^T\left(1+\|\sigma\|_{H^2(\Omega)}+\|\mu\|_{H^2(\Omega)}\right)\|\phi_t\|_{L^2(\Omega)}^2 \dt,
\end{align*}
where $C>0$ is a constant depending on $\|\phi_0\|_{H^4(\Omega)}$, $\|\sigma_0\|_{H^1(\Omega)}$, $\|u\|_{L^2(0,T; L^2(\Omega))}$, $\Omega$ and $T$.
Using now estimate \eqref{strhi} and Gronwall's lemma, we conclude that
\begin{equation}
\label{str1}
\|\phi_t\|_{L^\infty(0,T;L^2(\Omega))}+\|\mu_t\|_{L^2(0,T;L^2(\Omega))}\leq C,
\end{equation}
where $C>0$ is a constant depending on $\|\phi_0\|_{H^4(\Omega)}$, $\|\sigma_0\|_{H^1(\Omega)}$, $\|u\|_{L^2(0,T; L^2(\Omega))}$, $\Omega$ and $T$.

Next, testing the time derivative of \eqref{p1} by $\phi_{tt}$, summing it to the second time derivative of \eqref{p2} tested by $\mu_t$ and then
integrating over $(0,T)$, from \eqref{strhi}, H\"older's inequality and Young's inequality we infer that
\begin{align*}
&\frac12\|\mu_t(t)\|_{L^2(\Omega)}^2 +\int_0^T\|\phi_{tt}\|_{L^2(\Omega)}^2 \dt\\
&\quad \leq \frac12\|\mu_t(0)\|_{L^2(\Omega)}^2
+\int_0^T \|P'(\phi)\|_{L^\infty(\Omega)}(\|\sigma\|_{L^4(\Omega)}+\|\mu\|_{L^4(\Omega)})\|\phi_t\|_{L^4(\Omega)}\|\phi_{tt}\|_{L^2(\Omega)}\dt\\
&\qquad + \int_0^T \|P(\phi)\|_{L^\infty(\Omega)}(\|\sigma_t\|_{L^2(\Omega)}+\|\mu_t\|_{L^2(\Omega)})\|\phi_{tt}\|_{L^2(\Omega)}\dt\\
&\qquad + \int_0^T \|F'''(\phi)\|_{L^\infty(\Omega)}\|\phi_t\|_{L^4(\Omega)}^2\|\mu_t\|_{L^2(\Omega)}\dt\\
&\qquad + \int_0^T \|F''(\phi)\|_{L^\infty(\Omega)}\|\phi_{tt}\|_{L^2(\Omega)}\|\mu_t\|_{L^2(\Omega)}\dt\\
&\quad \leq C + \frac12 \int_0^T\|\phi_{tt}\|_{L^2(\Omega)}^2 \dt
       +\int_0^T(\|\sigma\|_{L^4(\Omega)}^2+\|\mu\|_{L^4(\Omega)}^2)\|\phi_t\|_{L^4(\Omega)}^2 \dt\\
&\qquad +C\int_0^T(\|\sigma_t\|_{L^2(\Omega)}^2+\|\mu_t\|_{L^2(\Omega)}^2)\dt
        + C\int_0^T\|\phi_t\|_{L^4(\Omega)}^2(\|\mu_t\|_{L^2(\Omega)}^2+1) \dt\\
&\quad \leq C+ \frac12 \int_0^T\|\phi_{tt}\|_{L^2(\Omega)}^2 \dt + C\int_0^T\|\phi_t\|_{H^1(\Omega)}^2\|\mu_t\|_{L^2(\Omega)}^2 \dt,
\end{align*}
where $C>0$ is a constant depending on $\|\phi_0\|_{H^6(\Omega)}$, $\|\sigma_0\|_{H^1(\Omega)}$, $\|u\|_{L^2(0,T; L^2(\Omega))}$, $\Omega$ and $T$.
Using estimate \eqref{strhi}, from Gronwall's lemma  we deduce
\begin{equation}\label{str2}
\|\mu_t\|_{L^\infty(0,T;L^2(\Omega))}+\|\phi_{tt}\|_{L^2(0,T;L^2(\Omega))}\leq C,
\end{equation}
where $C$ is a constant depending on $\|\phi_0\|_{H^6(\Omega)}$, $\|\sigma_0\|_{H^1(\Omega)}$, $\|u\|_{L^2(0,T; L^2(\Omega))}$, $\Omega$ and $T$.
This concludes the proof of Proposition~\ref{propreg}.
\end{proof}
\smallskip

Taking advantage of the higher-order regularity result illustrated in Proposition~\ref{propreg} and using a similar argument like for \cite[Theorem 2.6]{GLR17},
we obtain
\begin{proposition}\label{difft}
Assume that \textbf{(P1)}, \textbf{(F1)}, \textbf{(U1)} and \textbf{(C1)}--\textbf{(C3)} are satisfied. For any $\phi_0\in H^2_N(\Omega)\cap H^6(\Omega)$,
$\sigma_0\in H^1(\Omega)$ and $u \in \mathcal{U}_{\mathrm{ad}}$, we denote the corresponding state variables by ${\cal S}(u) = (\phi, \mu, \sigma)$.
Assume in addition that $\phi_Q\in H^1(0,T; L^2(\Omega))$. Then the reduced functional $\widetilde{\mathcal{J}}(u, \tau)$ is Fr\'{e}chet differentiable with
respect to $\tau$ and it holds
\begin{align*}
 D_{\tau} \widetilde{\mathcal{J}}(u, \tau)
 & = \frac{\beta_{Q}}{2} \int_\Omega|\phi(\tau) - \phi_{Q}(\tau)|^2\dx +\beta_{\Omega}\int_\Omega\left(\phi(\tau)-\phi_\Omega\right)\phi_t(\tau)\dx \\
& \quad  +\frac{\alpha_Q}{2}\int_\Omega|\sigma(\tau)-\sigma_Q|^2\dx + \frac{\beta_{S}}{2} \int_{\Omega} \phi_t(\tau) \dx +\beta_T.
\end{align*}
\end{proposition}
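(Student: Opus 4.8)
The plan is to use that, for fixed $u\in\uad$, the state $(\phi,\mu,\sigma)=\mathcal{S}(u)$ does not depend on $\tau$, so that differentiating $\widetilde{\mathcal{J}}(u,\tau)=\mathcal{J}(\mathcal{S}_1(u),\mathcal{S}_3(u),u,\tau)$ in $\tau$ only acts on the explicit occurrences of $\tau$ in \eqref{cost:i}. I would treat the six summands of $\mathcal{J}$ separately. The fifth term $\frac{\beta_u}{2}\iTT\!\ioma|u|^2\dx\dt$ is independent of $\tau$, and the sixth term $\beta_T\tau$ contributes $\beta_T$. For the remaining four the essential input is the extra time regularity of Proposition~\ref{propreg}: since $\phi_0\in H^2_N(\oma)\cap H^6(\oma)$, the strong solution satisfies $\phi\in H^2(0,T;L^2(\oma))\hookrightarrow C^1([0,T];L^2(\oma))$, so $\phi_t(\tau)\in L^2(\oma)$ is well defined for every $\tau$ and $\tau\mapsto\phi(\tau)$ is continuously differentiable in $L^2(\oma)$; moreover $\phi,\sigma\in C^0([0,T];L^2(\oma))$ by Proposition~\ref{strongexe}.

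Given this, the two ``pointwise-in-$\tau$'' terms are differentiated by the chain rule: $\tau\mapsto\frac{\beta_\Omega}{2}\|\phi(\tau)-\phi_\Omega\|_{L^2(\oma)}^2$ is $C^1$ with derivative $\beta_\Omega\ioma(\phi(\tau)-\phi_\Omega)\phi_t(\tau)\dx$, and $\tau\mapsto\frac{\beta_S}{2}\ioma(1+\phi(\tau))\dx$ is $C^1$ with derivative $\frac{\beta_S}{2}\ioma\phi_t(\tau)\dx$. The two ``running-cost'' terms are handled by the fundamental theorem of calculus: the map $t\mapsto\|\phi(t)-\phi_Q(t)\|_{L^2(\oma)}^2$ is continuous on $[0,T]$ because $\phi\in C^0([0,T];L^2(\oma))$ and, by the added hypothesis, $\phi_Q\in H^1(0,T;L^2(\oma))\hookrightarrow C^0([0,T];L^2(\oma))$, so $\tau\mapsto\frac{\beta_Q}{2}\int_0^\tau\|\phi-\phi_Q\|_{L^2(\oma)}^2\dt$ is $C^1$ with derivative $\frac{\beta_Q}{2}\|\phi(\tau)-\phi_Q(\tau)\|_{L^2(\oma)}^2$; likewise $t\mapsto\|\sigma(t)-\sigma_Q(t)\|_{L^2(\oma)}^2$ belongs to $L^1(0,T)$, whence $\tau\mapsto\frac{\alpha_Q}{2}\int_0^\tau\|\sigma-\sigma_Q\|_{L^2(\oma)}^2\dt$ is differentiable with derivative $\frac{\alpha_Q}{2}\|\sigma(\tau)-\sigma_Q(\tau)\|_{L^2(\oma)}^2$. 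Summing the five contributions yields exactly the stated formula for $D_\tau\widetilde{\mathcal{J}}(u,\tau)$.

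This $\tau$-derivative, combined with the Fréchet differentiability of $u\mapsto\mathcal{S}(u)$ into $C^0([0,T];L^2(\oma))$ coming from Proposition~\ref{diffu} and the continuous embedding $H^1(0,T;(H^2_N(\oma))')\cap L^2(0,T;H^2_N(\oma))\hookrightarrow C^0([0,T];L^2(\oma))$, gives the joint differentiability that is later needed for the optimality conditions, along the lines of \cite[Theorem~2.6]{GLR17}. I expect the only genuine difficulty to be the one already anticipated: producing $\phi_t\in C^0([0,T];L^2(\oma))$, which is precisely why the stronger hypothesis $\phi_0\in H^6(\oma)$ and Proposition~\ref{propreg} are invoked here — with merely $\phi\in H^1(0,T;(H^1(\oma))')$ the point-value terms $\|\phi(\tau)-\phi_\Omega\|^2$ and $\ioma\phi(\tau)\dx$ would only be Lipschitz, hence a.e.\ differentiable, in $\tau$, which is insufficient for a first-order optimality condition holding at the optimal time $\tau_*\in(0,T)$. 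A minor secondary point is the time regularity of $\sigma_Q$: if it is only $L^2(Q)$ the $\alpha_Q$-term is differentiable merely for a.e.\ $\tau$, so one should either keep the a.e.\ statement or assume $\sigma_Q\in H^1(0,T;L^2(\oma))$ as for $\phi_Q$.
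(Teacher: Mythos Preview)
Your proposal is correct and follows exactly the approach the paper itself indicates: the paper does not spell out a proof but simply says the result follows ``taking advantage of the higher-order regularity result illustrated in Proposition~\ref{propreg} and using a similar argument like for \cite[Theorem~2.6]{GLR17}'', which is precisely what you do in detail. Your remark on the time regularity of $\sigma_Q$ is a valid side observation that the paper glosses over.
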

\smallskip

Propositions \ref{diffu} and \ref{difft} allow us to derive the following first order necessary optimality condition for problem \textbf{(CP)}.
\begin{theorem}[First order necessary optimality condition]\label{necess1}
Assume that \textbf{(P1)}, \textbf{(F1)}, \textbf{(U1)}, \textbf{(C1)}--\textbf{(C3)} are satisfied,
$\phi_0\in H^2_N(\Omega)\cap H^6(\Omega)$, $\sigma_0\in H^1(\Omega)$ and $\phi_Q\in H^1(0,T; L^2(\Omega))$.
Suppose that $(\bu, \tau_*)\in\uad\times [0,T]$ is solution to the control problem {\bf (CP)} with associated state
$(\bphi,\bmu,\bs)=\cs(\bu)$.
Then, it holds
\begin{align}
&\beta_Q\int_{0}^{\Optime}\!\! \int_{\Omega} (\phi_*-\phi_Q)\xi \dx \dt
+\beta_\Omega\int_\Omega(\phi_*(\tau_*)-\phi_\Omega)\xi(\tau_*)\dx +\alpha_Q\int_0^{\Optime}\!\!\int_\Omega (\sigma_*-\sigma_Q)\rho\dx\dt
\nonumber\\
&  +\frac{\beta_S}{2}\int_\Omega \xi(\tau_*)\dx
+ \beta_{u} \int_{0}^{T}\!\! \int_{\Omega}  u_{*} (u - u_{*}) \dx \dt\geq 0,
\quad \forall\, u \in \mathcal{U}_{\mathrm{ad}},
\label{FONC:u}
\end{align}
where $(\xi, \eta, \rho)$ is the unique solution to the linearized problem \eqref{ls1}--\eqref{ls5} with $h=u-u_*$. Besides, setting
\begin{align}
{\cal L}(\phi_*,\sigma_*,\tau_*) &=   \frac{\beta_{Q}}{2} \int_\Omega|\phi_*(\tau_*) - \phi_{Q}(\tau_*)|^2\dx
+\beta_{\Omega}\int_\Omega\left(\phi_*(\tau_*)-\phi_\Omega\right)\partial_t\phi_*(\tau_*)\dx \nonumber\\
& \quad  +\frac{\alpha_Q}{2}\int_\Omega|\sigma_*(\tau_*)-\sigma_Q|^2\dx + \frac{\beta_{S}}{2} \int_{\Omega} \partial_t\phi_*(\tau_*) \dx +\beta_T\nonumber
\end{align}
we have
\begin{equation}
{\cal L}(\phi_*,\sigma_*,\tau_*)\ \
\begin{cases}
\geq 0,\quad \text{if}\ \tau_*=0,\\
= 0,\quad \text{if}\ \tau_*\in(0,T),\\
\leq 0,\quad \text{if}\ \tau_*=T.
\end{cases}
\label{FONC:tau}
\end{equation}
\end{theorem}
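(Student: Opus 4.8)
The plan is to derive \eqref{FONC:u} and \eqref{FONC:tau} from the variational inequality that characterizes a minimizer of the reduced functional $\widetilde{\mathcal J}$ over the admissible set $\uad\times[0,T]$, exploiting the differentiability results in Propositions~\ref{diffu} and \ref{difft}. Since $(\bu,\tau_*)$ minimizes $\widetilde{\mathcal J}$ over the convex set $\uad\times[0,T]$, for every admissible direction $(u-\bu,\sigma-\tau_*)$ with $u\in\uad$ and $\sigma\in[0,T]$ we have
\[
D_u\widetilde{\mathcal J}(\bu,\tau_*)(u-\bu)+D_\tau\widetilde{\mathcal J}(\bu,\tau_*)(\sigma-\tau_*)\geq 0.
\]
Choosing first $\sigma=\tau_*$ isolates the control inequality $D_u\widetilde{\mathcal J}(\bu,\tau_*)(u-\bu)\geq 0$ for all $u\in\uad$, and choosing $u=\bu$ isolates $D_\tau\widetilde{\mathcal J}(\bu,\tau_*)(\sigma-\tau_*)\geq 0$ for all $\sigma\in[0,T]$. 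The latter, by the usual one-sided argument at the endpoints and interior of $[0,T]$, yields exactly the sign conditions \eqref{FONC:tau} once we recognize that $D_\tau\widetilde{\mathcal J}(\bu,\tau_*)={\cal L}(\phi_*,\sigma_*,\tau_*)$, which is precisely the expression furnished by Proposition~\ref{difft} (with $\phi=\phi_*$, $\sigma=\sigma_*$). So \eqref{FONC:tau} is essentially immediate.

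The substantive part is \eqref{FONC:u}. First I would compute $D_u\widetilde{\mathcal J}(\bu,\tau_*)h$ for $h=u-\bu$ by the chain rule: since $\widetilde{\mathcal J}(u,\tau)={\cal J}({\cal S}_1(u),{\cal S}_3(u),u,\tau)$ and ${\cal S}$ is Fréchet differentiable from $L^2(Q)$ into ${\cal Y}$ (Proposition~\ref{diffu}), with $D{\cal S}(\bu)h=(\xi^h,\eta^h,\rho^h)$ the solution of the linearized system \eqref{ls1}--\eqref{ls5}, differentiating each term of ${\cal J}$ in \eqref{cost:i} gives
\[
D_u\widetilde{\mathcal J}(\bu,\tau_*)h
=\beta_Q\!\int_0^{\tau_*}\!\!\ioma(\phi_*-\phi_Q)\xi^h\dx\dt
+\beta_\Omega\!\ioma(\phi_*(\tau_*)-\phi_\Omega)\xi^h(\tau_*)\dx
+\alpha_Q\!\int_0^{\tau_*}\!\!\ioma(\sigma_*-\sigma_Q)\rho^h\dx\dt
+\tfrac{\beta_S}{2}\ioma\xi^h(\tau_*)\dx
+\beta_u\!\iTT\!\!\ioma \bu\,h\dx\dt.
\]
Here I would be careful with the pointwise-in-time terms: evaluation at $t=\tau_*$ is legitimate because, as noted after Proposition~\ref{diffu}, ${\cal S}$ is Fréchet differentiable into $C^0([0,T];L^2(\Omega))$, so $\xi^h(\tau_*)$ makes sense as an element of $L^2(\Omega)$ and depends continuously on $h$; this is what makes the terms $\beta_\Omega\ioma(\phi_*(\tau_*)-\phi_\Omega)\xi^h(\tau_*)\dx$ and $\tfrac{\beta_S}{2}\ioma\xi^h(\tau_*)\dx$ well-defined bounded linear functionals of $h$. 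Inserting this expression into $D_u\widetilde{\mathcal J}(\bu,\tau_*)(u-\bu)\geq 0$ and writing $\xi=\xi^{u-\bu}$, $\rho=\rho^{u-\bu}$ gives precisely \eqref{FONC:u}.

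The main obstacle is not any single deep estimate but rather the bookkeeping needed to justify term-by-term differentiation of ${\cal J}$ and to identify the derivatives of the quadratic tracking terms correctly — in particular handling the fixed truncation time $\tau_*$ in the integrals $\int_0^{\tau_*}$ (which is harmless since $\tau$ is held fixed while differentiating in $u$), and ensuring the chain rule applies in the right function spaces (the quadratic functionals $v\mapsto\tfrac12\|v-v_Q\|^2$ are $C^1$ on $L^2(Q)$, and $v\mapsto\tfrac12\|v(\tau_*)-v_\Omega\|^2$ is $C^1$ on $C^0([0,T];L^2(\Omega))$, while $v\mapsto\tfrac12\ioma(1+v(\tau_*))\dx$ is affine and continuous there). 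Once the differentiability of $\widetilde{\mathcal J}$ in each variable is in place via Propositions~\ref{diffu} and \ref{difft}, the conclusion follows from the standard first-order condition for minimization over a convex set, split into the $u$-direction and the $\tau$-direction as above.
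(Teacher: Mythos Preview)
Your proposal is correct and follows essentially the same approach as the paper's own proof, which simply invokes the convexity of $\uad$ together with Propositions~\ref{diffu} and~\ref{difft} and calls the $u$-part ``standard arguments''; you have merely spelled out those standard arguments in detail. The only cosmetic difference is that the paper treats the $u$- and $\tau$-directions separately from the start rather than first writing a joint variational inequality, but since you immediately specialize by taking $\sigma=\tau_*$ or $u=\bu$, this makes no substantive difference.
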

\begin{proof}
Recalling that $\uad$ is a closed and convex subset of $L^2(Q)$, we can conclude \eqref{FONC:u} from standard arguments (with no need to be
repeated here). On the other hand, we have $D_\tau \widetilde{\mathcal{J}}(u_*, \tau_*)(s-\tau_*)\geq 0$ for $s\in [0,T]$.
Noting that in Theorem \ref{thm:minimizer} we cannot exclude the trivial cases where $\tau_*=0$ or $\tau_*=T$, then we arrive at the conclusion \eqref{FONC:tau}.

The proof is complete.
\end{proof}

It is possible to eliminate the variables $\xi$ and $\rho$ from the inequality \eqref{FONC:u}.
Suppose that $(\bu, \tau_*)\in\uad\times [0,T]$ is solution to the control problem {\bf (CP)} with associated state
$(\bphi,\bmu,\bs)=\cs(\bu)$. By using the formal Lagrangian method (see e.g., \cite{To}), we introduce the following {\em adjoint system} (see \cite{CGRS16} for
a simplified case with $\beta_S=\beta_T=\alpha_Q=0$):
\begin{align}\label{ad1}
&-\partial_t  p+\Delta q-F''(\bphi)\, q+P'(\bphi)(\bs-\bmu)( r- p)
=\beta_Q\,(\bphi-\vp_Q), \ \quad\mbox{in }\,Q,\\[1mm]
\label{ad2}
&q-\Delta p + P(\bphi)( p- r)=0, \qquad \qquad \qquad \qquad \qquad \qquad \qquad \qquad \qquad\mbox{in }\,Q,\\[1mm]
\label{ad3}
&-\partial_t r-\Delta r + P(\bphi)( r- p)=\alpha_Q(\sigma_*-\sigma_Q), \qquad \qquad \qquad \qquad\qquad  \quad\mbox{in }\,Q,\\[1mm]
\label{ad4}
&\dn p =\dn q =\dn r=0, \qquad\qquad \qquad \qquad  \qquad \qquad \qquad \qquad \qquad \qquad \ \ \mbox{on }\,\Sigma,\\[1mm]
\label{ad5}
& r(\Optime)=0, \quad  p(\Optime)=\beta_\oma\,(\bphi(\Optime)-\vp_\Omega)+\frac{\beta_S}{2},\qquad \qquad \qquad \qquad \qquad \, \quad\mbox{in }\,\Omega\,.
\end{align}
We call $(p,q,r)$ a solution to the adjoint system
\eqref{ad1}--\eqref{ad5} if and only if the triplet $(p,q,r)$ satisfies the following conditions:
\begin{align}
&p\in H^1(0,T;(H^2_N(\Omega))')\cap C^0([0,T];L^2(\Omega))\cap L^2(0,T;H^2_N(\Omega)),\nonumber\\
&q\in L^2(Q), \nonumber\\
&\nonumber r\in H^1(0,T;L^2(\Omega))\cap C^0([0,T];H^1(\Omega))\cap L^2(0,T;H^2_N(\Omega)).\nonumber
\end{align}
The equations \eqref{ad2}--\eqref{ad5} are satisfied almost everywhere in their respective
domains, however, since the final value $p(\Optime)$ only belongs to $L^2(\Omega)$ (see \textbf{(C2)}), the regularity of $p$ is quite low so
that \eqref{ad1} and the related boundary condition in \eqref{ad4} have to be understood in the weak variational sense, namely
\begin{align}
&\langle -\partial_t p(t),v\rangle_{(H^2_N(\Omega))', H^2_N(\Omega)}\,+\ioma q(t)\Delta v\dx\,-\ioma F''(\bphi(t))\,q(t)\,v\dx
\nonumber\\[1mm]
&\quad +\ioma P'(\bphi(t))(\bs(t)-\bmu(t))\,(r(t)-p(t))\,v\dx \,=\,\ioma\beta_Q\,(\bphi(t)-\vp_Q(t))v\dx, \nonumber %\label{4.7}
\end{align}
for all $v\in H^2_N(\Omega)$ and almost every $t\in (0,T)$.

Let us notice that the only difference between the adjoint system \eqref{ad1}--\eqref{ad5} and the one of \cite{CGRS16}
consists in the presence of a non-zero right hand side in equation \eqref{ad3} and a constant $\beta_S$ in \eqref{ad5}.
Since this extra right-hand side is indeed an $L^2(Q)$-function, we can repeat exactly the same argument used in
\cite[Theorem 4.2]{CGRS16} to obtain the following existence and uniqueness result:
\begin{proposition}
 Assume that \textbf{(P1)}, \textbf{(F1)}, \textbf{(U1)}, \textbf{(C1)}--\textbf{(C3)} are satisfied, $\phi_0\in H^2_N(\Omega)\cap H^3(\Omega)$,
 and $\sigma_0\in H^1(\Omega)$. Then the adjoint system \eqref{ad1}--\eqref{ad5} has a unique weak solution $(p,q,r)$ on $[0,T]$ in the sense formulated above.
\end{proposition}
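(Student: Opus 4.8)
The plan is to follow almost verbatim the proof of \cite[Theorem 4.2]{CGRS16}. As already remarked, the adjoint system \eqref{ad1}--\eqref{ad5} differs from the one treated there only in two respects: the extra source term $\alpha_Q(\sigma_*-\sigma_Q)$ on the right-hand side of \eqref{ad3}, and the additive constant $\tfrac{\beta_S}{2}$ in the terminal datum for $p$ in \eqref{ad5}. Both are harmless: the former because $\sigma_*-\sigma_Q\in L^2(Q)$ by \textbf{(C2)} and Proposition~\ref{strongexe}, so it is an admissible forcing for the $r$-equation; the latter because $p(\Optime)$ still belongs to $L^2(\Omega)$, which is all the analysis of \cite{CGRS16} requires. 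First I would reverse time by setting $s=T-t$, which turns \eqref{ad1}--\eqref{ad5} into a forward-in-time linear parabolic system with initial datum $p(0)\in L^2(\Omega)$ for $p$ and $r(0)=0\in H^1(\Omega)$ for $r$, and then set up a Faedo--Galerkin approximation based on the eigenfunctions of $-\Delta$ under homogeneous Neumann boundary conditions, approximating $p$ and $r$ in the corresponding finite-dimensional spaces and eliminating $q$ through \eqref{ad2}, namely $q=\Delta p+P(\bphi)(r-p)$.

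The heart of the argument is the derivation of a priori bounds uniform in the Galerkin index. One tests \eqref{ad1} by $p$ and \eqref{ad3} by $r$, inserts the expression for $q$, and absorbs the coefficient contributions. All the coefficients $F''(\bphi)$, $P(\bphi)$, $P'(\bphi)$ are bounded in $L^\infty(Q)$ because $\bphi\in L^\infty(0,T;H^3(\Omega))\hookrightarrow L^\infty(Q)$ by Proposition~\ref{strongexe}, together with \textbf{(F1)} and \textbf{(P1)}; the coupling coefficient $P'(\bphi)(\bs-\bmu)$ lies in $L^\infty(0,T;L^6(\Omega))$ since $\bs-\bmu\in L^\infty(0,T;H^1(\Omega))$, which suffices to control the cross terms by H\"older's and Young's inequalities. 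The new term $\int_\Omega \alpha_Q(\sigma_*-\sigma_Q)\,r\,\dx$ is dominated by $\tfrac12\|r\|_{L^2(\Omega)}^2+C\|\sigma_*-\sigma_Q\|_{L^2(\Omega)}^2$ and handled by Gronwall thanks to $\sigma_*-\sigma_Q\in L^2(Q)$, while the constant $\tfrac{\beta_S}{2}$ only affects the (still $L^2$) terminal value of $p$. After returning to the original time variable, this yields $p$ bounded in $L^\infty(0,T;L^2(\Omega))\cap L^2(0,T;H^2_N(\Omega))\cap H^1(0,T;(H^2_N(\Omega))')$, $r$ bounded in $L^\infty(0,T;H^1(\Omega))\cap L^2(0,T;H^2_N(\Omega))\cap H^1(0,T;L^2(\Omega))$, and $q$ bounded in $L^2(Q)$.

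Next I would pass to the limit in the Galerkin scheme. Since the system is linear in $(p,q,r)$ and all its coefficients are fixed functions of the already-known state $(\bphi,\bmu,\bs)$, weak and weak-$*$ compactness suffice to take limits in every term; the continuity-in-time properties $p\in C^0([0,T];L^2(\Omega))$ and $r\in C^0([0,T];H^1(\Omega))$ follow from the interpolation embedding $H^1(0,T;X)\cap L^2(0,T;Y)\hookrightarrow C^0([0,T];[X,Y]_{1/2})$, which also ensures that the terminal conditions \eqref{ad5} are attained. Recovering \eqref{ad2}--\eqref{ad4} almost everywhere and \eqref{ad1} in the stated weak variational sense against $H^2_N(\Omega)$ test functions is then routine. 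Uniqueness is immediate from linearity: the difference of two solutions solves the homogeneous system with zero terminal data, and the same energy estimate forces it to vanish.

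The step I expect to require the most care — though it is already present in \cite{CGRS16} and calls for no new idea — is the low regularity of $p$, whose terminal datum belongs only to $L^2(\Omega)$. This is precisely why \eqref{ad1} must be read in the $(H^2_N(\Omega))'$--$H^2_N(\Omega)$ duality rather than in the $(H^1(\Omega))'$--$H^1(\Omega)$ one, and why one must keep track of the fact that $q=\Delta p+P(\bphi)(r-p)$ is merely an $L^2(Q)$ function; accordingly, in the weak form of \eqref{ad1} the coupling term $\int_\Omega P'(\bphi)(\bs-\bmu)(r-p)v\,\dx$ is estimated using $P'(\bphi)(\bs-\bmu)\in L^\infty(0,T;L^6(\Omega))$, $r-p\in L^\infty(0,T;L^2(\Omega))$ and $v\in H^2_N(\Omega)\hookrightarrow L^\infty(\Omega)$, which keeps it meaningful despite the weak regularity of $p$.
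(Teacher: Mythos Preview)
Your proposal is correct and follows essentially the same approach as the paper, which simply observes that the adjoint system differs from the one in \cite[Theorem~4.2]{CGRS16} only by the $L^2(Q)$ source term $\alpha_Q(\sigma_*-\sigma_Q)$ in \eqref{ad3} and the constant $\tfrac{\beta_S}{2}$ in \eqref{ad5}, and then refers to that argument verbatim. One very minor remark: since the terminal conditions \eqref{ad5} are prescribed at $\Optime$ rather than at $T$, the time reversal should read $s=\Optime-t$; otherwise your outline is faithful to (and in fact more detailed than) what the paper provides.
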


Finally, we are able to rewrite the first order necessary optimality condition \eqref{FONC:u} for the minimizer $(u_{*}, \Optime)$ using the adjoint states:
\begin{corollary}[First order necessary optimality condition via adjoint states]\label{thm:FONC}
Assume that the assumptions of Theorem \ref{necess1} are satisfied. Let $(u_{*}, \Optime) \in \mathcal{U}_{\mathrm{ad}} \times [0,T]$ denote a minimizer to
the optimal control problem {\bf (CP)} with corresponding state variables $(\phi_{*}, \mu_{*}, \sigma_{*})={\cal S}(u_{*})$ and associated adjoint variables
$(p,q,r)$.  Then, the variational inequality \eqref{FONC:u} can be written as
\begin{equation}
\begin{aligned}
\beta_{u} \int_{0}^{T}\!\! \int_{\Omega} u_{*} (u - u_{*}) \dx \dt + \int_{0}^{\Optime}\!\! \int_{\Omega}  r (u - u_{*}) \dx \dt \geq 0, \quad
\forall\, u \in \mathcal{U}_{\mathrm{ad}}. \label{FONC:u1}
\end{aligned}
\end{equation}
\end{corollary}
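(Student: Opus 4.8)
The plan is to eliminate the linearized variables $\xi$ and $\rho$ from the variational inequality \eqref{FONC:u} by pairing the linearized system \eqref{ls1}--\eqref{ls5} (taken with $h=u-u_*$, so that $(\xi,\eta,\rho)$ is exactly the triple occurring in \eqref{FONC:u}) against the adjoint system \eqref{ad1}--\eqref{ad5}; this makes rigorous the duality computation underlying the formal Lagrangian method. I recall that, by Proposition~\ref{diffu}, $\xi\in L^2(0,T;H^2_N(\oma)\cap H^3(\oma))\cap H^1(0,T;(H^1(\oma))')\hookrightarrow C^0([0,T];L^2(\oma))$, $\eta\in L^2(0,T;H^1(\oma))$, $\rho\in H^1(0,T;L^2(\oma))\cap C^0([0,T];H^1(\oma))\cap L^2(0,T;H^2_N(\oma))$, while the adjoint solution satisfies $p\in H^1(0,T;(H^2_N(\oma))')\cap C^0([0,T];L^2(\oma))\cap L^2(0,T;H^2_N(\oma))$, $q\in L^2(Q)$ and $r\in H^1(0,T;L^2(\oma))\cap C^0([0,T];H^1(\oma))\cap L^2(0,T;H^2_N(\oma))$. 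In particular $\xi(t)\in H^2_N(\oma)$ is an admissible test function in the very–weak formulation of \eqref{ad1}, and $p(t)\in H^2_N(\oma)$ is admissible in the weak formulation of \eqref{ls1}.

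I then test the weak form of \eqref{ls1} with $p$, the very–weak form of \eqref{ad1} with $\xi$, equation \eqref{ls3} with $r$, and equation \eqref{ad3} with $\rho$, and add the four resulting identities. Using \eqref{ls2} to replace $\Delta\xi$ by $F''(\bphi)\xi-\eta$ and \eqref{ad2} to replace $\Delta p$ by $q+P(\bphi)(p-r)$, and integrating by parts (all boundary terms vanish by the homogeneous Neumann conditions \eqref{ls4}, \eqref{ad4}), a straightforward bookkeeping shows that all terms carrying $\eta$, $q$, $F''(\bphi)$ and $P'(\bphi)(\bs-\bmu)$ cancel pairwise, as do the $P(\bphi)\,p\rho$ and $P(\bphi)\,\eta r$ contributions, leaving, for a.e.\ $t\in(0,\Optime)$,
\begin{align*}
\frac{\mathrm{d}}{\dt}\ioma (p\xi+r\rho)\dx
= -\,\beta_Q\ioma(\bphi-\vp_Q)\xi\dx\,-\,\alpha_Q\ioma(\bs-\s_Q)\rho\dx\,+\,\ioma r\,(u-u_*)\dx .
\end{align*}
The left–hand side is well defined and absolutely continuous on $[0,\Optime]$: since $p,\xi\in L^2(0,T;H^2_N(\oma))$ with time derivatives in $L^2(0,T;(H^2_N(\oma))')$, the standard integration–by–parts lemma gives $\frac{\mathrm{d}}{\dt}\langle p,\xi\rangle=\langle\partial_t p,\xi\rangle+\langle\partial_t\xi,p\rangle$, while $t\mapsto\ioma r\rho\dx$ is handled directly in $L^2(\oma)$ because $r,\rho\in H^1(0,T;L^2(\oma))$.

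Integrating this identity over $(0,\Optime)$ and using the initial conditions $\xi(0)=\rho(0)=0$ from \eqref{ls5} together with the terminal conditions $r(\Optime)=0$ and $p(\Optime)=\beta_\oma(\bphi(\Optime)-\vp_\oma)+\tfrac{\beta_S}{2}$ from \eqref{ad5}, the left–hand side reduces to $\beta_\oma\ioma(\bphi(\Optime)-\vp_\oma)\xi(\Optime)\dx+\tfrac{\beta_S}{2}\ioma\xi(\Optime)\dx$, so that, after rearranging,
\begin{align*}
&\beta_Q\int_0^{\Optime}\!\!\ioma(\bphi-\vp_Q)\xi\dx\dt
+\beta_\oma\ioma(\bphi(\Optime)-\vp_\oma)\xi(\Optime)\dx\\
&\qquad +\,\alpha_Q\int_0^{\Optime}\!\!\ioma(\bs-\s_Q)\rho\dx\dt
+\frac{\beta_S}{2}\ioma\xi(\Optime)\dx
=\int_0^{\Optime}\!\!\ioma r\,(u-u_*)\dx\dt .
\end{align*}
Since the left–hand side is precisely the sum of the first four summands of \eqref{FONC:u} (with $h=u-u_*$), substituting it into \eqref{FONC:u} yields exactly \eqref{FONC:u1}.

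The main obstacle is the rigorous justification of this pairing in the presence of the very low regularity of the adjoint variables $p$ (merely $L^2(\oma)$–valued in time, with $\partial_t p\in(H^2_N(\oma))'$) and $q\in L^2(Q)$: one must check that every product appearing in the computation — e.g.\ $\ioma q\,\Delta\xi\dx$, $\ioma P'(\bphi)(\bs-\bmu)(r-p)\,\xi\dx$, $\ioma P(\bphi)(p-r)\,\eta\dx$ — defines a finite integral, which follows from $\bphi\in C^0(\overline{Q})$, $\bmu,\bs\in L^\infty(0,T;H^1(\oma))\cap L^2(0,T;H^2_N(\oma))$, assumptions \textbf{(P1)}, \textbf{(F1)}, and the regularity of $\xi$ recalled above, and that the termwise time integration is legitimate, which reduces to the two absolute–continuity statements above. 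Once these functional–analytic points are settled the cancellations are purely algebraic and reproduce the formal Lagrangian identity, so no further difficulty arises; note also that everything takes place on $(0,\Optime)$, where the terminal data \eqref{ad5} are prescribed, so the behaviour of the adjoint variables on $(\Optime,T)$ is irrelevant.
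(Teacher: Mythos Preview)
Your argument is correct and follows precisely the standard duality computation that the paper invokes by referring to \cite[Theorem~4.3]{CGRS16}; the paper itself gives no details beyond noting that the only change from that reference is the extra right-hand side $\alpha_Q(\bs-\s_Q)$ in \eqref{ad3}, which you handle correctly. A minor imprecision: saying you ``add the four resulting identities'' is not quite accurate --- to obtain $\frac{\mathrm{d}}{\mathrm{d}t}\ioma(p\xi+r\rho)\dx$ one must take the combination [\eqref{ls1} tested with $p$] $-$ [\eqref{ad1} tested with $\xi$] $+$ [\eqref{ls3} tested with $r$] $-$ [\eqref{ad3} tested with $\rho$] (so that $\langle\xi_t,p\rangle+\langle\partial_t p,\xi\rangle$ and $(\partial_t\rho,r)+(\partial_t r,\rho)$ appear with the right signs); with that choice the pairwise cancellations you describe are exactly right, and your final identity and its integration over $(0,\Optime)$ are correct.
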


\begin{remark}
For the proof we can refer to
\cite[Theorem 4.3]{CGRS16} with slight modifications due to the presence of the term $\alpha_Q(\sigma_*-\sigma_Q)$ in equation \eqref{ad3}.
Besides, if we extend $r$ by zero to $(\Optime, T]$, then we can express \eqref{FONC:u1} as
\begin{align*}
\int_{0}^{T}\!\! \int_{\Omega} (\beta_{u} u_{*} +r)(u - u_{*}) \dx \dt \geq 0, \quad \forall\, u \in \mathcal{U}_{\mathrm{ad}},
\end{align*}
which allows the interpretation that the optimal control $u_{*}$ is indeed the $L^{2}(Q)$-projection of $-\beta_{u}^{-1} r$ onto the set
$\mathcal{U}_{\mathrm{ad}}$ (provided that $\beta_u>0$).
\end{remark}

%%%%%%%%%%%%%%%%%%%%%%%%%%%%%%%%%%%%%%%%%%%%%%%%%%%%%%%%%%%%%%%%%%%%%%%%
\section{Appendix}
\setcounter{equation}{0}

Let $m\in \mathbb{R}$ be a given constant. We consider the following nonlocal elliptic boundary value problem
\begin{align}
\begin{cases}
&\displaystyle{-\Delta \phi+F'(\phi)=m-|\Omega|^{-1}\int_\Omega \phi \dx, \quad \text{in}\ \Omega,}\\
&\partial_\nu\phi=0,\qquad\qquad \qquad \qquad\qquad\qquad \ \,\text{on}\ \partial\Omega.\\
\end{cases}\label{sta3}
\end{align}
Problem \eqref{sta3} can be associated with the following functional
\begin{align}
\Upsilon(\phi) = \int_{\Omega} \left(\frac{1}{2} |\nabla \phi|^2 +
F(\phi)\right)\dx+ \frac{1}{2}|\Omega|\left(m-|\Omega|^{-1}\int_\Omega \phi \dx\right)^2,\quad \forall\, \phi\in H^1(\Omega).
\label{E1}
\end{align}
The following result has been obtained in \cite[Lemma 3.1, Lemma 3.2]{WGZ07}.
\begin{lemma} \label{critical}
Let assumption (\textbf{F1}) be satisfied.

(1) Suppose that $\psi \in H^2_N(\Omega)$ is a (strong) solution
to problem \eqref{sta3}. Then $\psi$ is a critical point
of the functional $\Upsilon(\phi)$ in $H^1(\Omega)$. Conversely, if
$\psi$ is a critical point of the functional $\Upsilon(\phi)$ in
$H^1(\Omega)$, then $\psi\in H^2_N(\Omega)$ and it is a
strong solution to problem \eqref{sta3}.

(2) The functional $\Upsilon(\phi)$ has at least one
    minimizer $\psi\in H^1(\Omega)$ such that
\begin{equation}
    \Upsilon(\psi)= \displaystyle{\inf_{\phi\in H^1(\Omega)}}\Upsilon(\phi).
\end{equation}
\end{lemma}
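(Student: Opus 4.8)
The plan is to follow the classical variational-characterization and direct-method scheme, treating the two assertions of the lemma separately. For part (1), I would first compute the G\^ateaux derivative of $\Upsilon$ at a point $\phi\in H^1(\Omega)$: writing $\overline{\phi}=|\Omega|^{-1}\ioma\phi\dx$, one finds, for every $h\in H^1(\Omega)$,
\begin{align}
\langle \Upsilon'(\phi),h\rangle=\ioma\nabla\phi\cdot\nabla h\dx+\ioma\big(F'(\phi)-m+\overline{\phi}\big)\,h\dx.\nonumber
\end{align}
This is well defined because \textbf{(F1)} gives $|F'(s)|\le C(1+|s|^{r-1})$ with $r-1<5$, so the Sobolev embedding $H^1(\Omega)\hookrightarrow L^6(\Omega)$ yields $F'(\phi)\in L^{6/(r-1)}(\Omega)\hookrightarrow (H^1(\Omega))'$; the standard theory of Nemytskii operators shows moreover that $\Upsilon\in C^1(H^1(\Omega))$. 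If $\psi\in H^2_N(\Omega)$ solves \eqref{sta3}, then multiplying the equation by $h$ and integrating by parts (the Neumann condition annihilating the boundary term) gives $\langle\Upsilon'(\psi),h\rangle=0$ for all $h$, so $\psi$ is a critical point. Conversely, if $\langle\Upsilon'(\psi),h\rangle=0$ for all $h$, testing with $h\in C_c^\infty(\Omega)$ shows that $\psi$ is a distributional solution of $-\Delta\psi+F'(\psi)=m-\overline{\psi}$; it then remains to upgrade the regularity and recover the boundary condition.

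I expect this regularity upgrade to be the main obstacle: a priori $F'(\psi)$ lies only in $L^{6/(r-1)}(\Omega)$, which may be below $L^2(\Omega)$ when $r$ is close to $6$, so one needs a finite iteration of $W^{2,p}$-elliptic regularity for the Neumann problem combined with the Sobolev embeddings $W^{2,p}(\Omega)\hookrightarrow L^q(\Omega)$; since $r<6$ the integrability exponents strictly improve at each step, so the iteration terminates (indeed with $\psi\in L^\infty(\Omega)$), whence $F'(\psi)\in L^2(\Omega)$ and $\psi\in H^2(\Omega)$ by elliptic regularity. Testing again with a general $h\in H^1(\Omega)$ and integrating by parts then forces $\partial_\nu\psi=0$, i.e. $\psi\in H^2_N(\Omega)$. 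This part is precisely \cite[Lemma 3.1]{WGZ07}, to which I would refer for the details of the bootstrap.

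For part (2), I would apply the direct method. First, $\Upsilon$ is bounded from below: the coercivity inequality $F(s)\ge\alpha_5|s|-\alpha_6$ in \textbf{(F1)} gives $\Upsilon(\phi)\ge-\alpha_6|\Omega|$ for every $\phi\in H^1(\Omega)$, while the upper growth bound on $F$ together with $H^1(\Omega)\hookrightarrow L^6(\Omega)$ (recall $r<6$) shows $\Upsilon$ is finite-valued on $H^1(\Omega)$. Let $\{\phi_n\}$ be a minimizing sequence. Since the three summands of $\Upsilon(\phi_n)$ are bounded below by $0$, $-\alpha_6|\Omega|$ and $0$ respectively, a uniform bound $\Upsilon(\phi_n)\le C$ forces $\|\nabla\phi_n\|_{L^2(\Omega)}$ and $|\overline{\phi_n}|$ to remain bounded, so by the Poincar\'e--Wirtinger inequality $\{\phi_n\}$ is bounded in $H^1(\Omega)$. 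Extracting a subsequence, $\phi_n\rightharpoonup\psi$ weakly in $H^1(\Omega)$, strongly in $L^p(\Omega)$ for $p<6$, and a.e. in $\Omega$. Then $\ioma|\nabla\psi|^2\dx\le\liminf_n\ioma|\nabla\phi_n|^2\dx$ by weak lower semicontinuity of the $L^2$-norm; $\overline{\phi_n}\to\overline{\psi}$, so $(m-\overline{\phi_n})^2\to(m-\overline{\psi})^2$; and since $F(\phi_n)+\alpha_6\ge0$ and $F(\phi_n)\to F(\psi)$ a.e., Fatou's lemma gives $\ioma F(\psi)\dx\le\liminf_n\ioma F(\phi_n)\dx$. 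Summing the three contributions, $\Upsilon(\psi)\le\liminf_n\Upsilon(\phi_n)=\inf_{H^1(\Omega)}\Upsilon$, and since $\Upsilon(\psi)$ is finite and $\psi\in H^1(\Omega)$, $\psi$ is a minimizer, which proves the claim.
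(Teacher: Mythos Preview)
Your proposal is correct and follows the standard variational scheme that the paper itself invokes: the paper does not give an independent proof of this lemma but simply cites \cite[Lemma 3.1, Lemma 3.2]{WGZ07}, and your sketch (G\^ateaux derivative computation, elliptic bootstrap for the regularity upgrade, and direct method with Fatou for the existence of a minimizer) is exactly the argument carried out there. Your explicit identification of the bootstrap as the only nontrivial step, together with the reference to \cite[Lemma 3.1]{WGZ07}, is entirely in line with the paper's treatment.
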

\begin{remark}
By the elliptic regularity theory and a bootstrap argument, the minimizer $\psi$ is indeed a classical solution such that $\psi\in C^\infty(\overline{\Omega})$,
provided that the boundary $\partial \Omega$ is smooth.
\end{remark}

Associated with problem \eqref{sta3}, the following \L ojasiewicz-Simon type inequality has been proven in \cite[Lemma 4.1]{WGZ07} (see \cite[Lemma 2.2]{Zhang}
for a slightly weaker version).
\begin{lemma}\label{LS1}
   Let \textbf{(F1)} and \textbf{(F2)} be satisfied.
   Suppose that  $\psi$ is a
   critical point of $\Upsilon(\phi)$ in $H^1(\Omega)$. Then there exist constants
   $\theta\in(0,\frac{1}{2})$ and $\beta >0$, depending on  $\psi$, $m$ and $\Omega$, such that,
   for any $\phi \in H^1(\Omega)$ with $\|\phi-\psi\|_{H^1(\Omega)}< \beta$,
   it holds
\begin{align}
   \left\| - \Delta \phi + F'(\phi)-\left(m - |\Omega|^{-1}\int_\Omega
   \phi \dx\right)\right\|_{(H^1(\Omega))'} \ \geq\  | \Upsilon(\phi)
   -\Upsilon(\psi)|^{1-\theta}.
   \label{LSa}
\end{align}
\end{lemma}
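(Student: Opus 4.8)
The expression inside the norm on the left of \eqref{LSa} is exactly the gradient of $\Upsilon$ at $\phi$ relative to the Gelfand triple $H^1(\Omega)\hookrightarrow L^2(\Omega)\hookrightarrow(H^1(\Omega))'$: differentiating \eqref{E1} one finds, for every $v\in H^1(\Omega)$,
\begin{align}
\langle\Upsilon'(\phi),v\rangle_{(H^1)',H^1}=\int_\Omega\big(\nabla\phi\cdot\nabla v+F'(\phi)\,v\big)\dx-\big(m-\overline{\phi}\,\big)\int_\Omega v\dx,\nonumber
\end{align}
i.e. $\mathcal{M}(\phi):=\Upsilon'(\phi)=-\Delta\phi+F'(\phi)-(m-\overline{\phi})$ in $(H^1(\Omega))'$. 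The plan is to deduce \eqref{LSa} from the abstract \L ojasiewicz--Simon gradient inequality (Simon's original reduction, or its Banach-space version due to Chill): it is enough to show that $\Upsilon$ is of class $C^{2}$ near $\psi$, that the gradient map $\phi\mapsto\mathcal{M}(\phi)$ is real-analytic there, and that the linearisation $L:=\mathcal{M}'(\psi)\in\mathcal{L}(H^1(\Omega),(H^1(\Omega))')$ is a self-adjoint Fredholm operator of index zero.

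I would check the Fredholm property first, as it is routine. One has $Lv=-\Delta v+F''(\psi)\,v+|\Omega|^{-1}\int_\Omega v\dx$: the Neumann Laplacian $-\Delta:H^1(\Omega)\to(H^1(\Omega))'$ is self-adjoint and Fredholm of index zero (its kernel is the constants), multiplication by $F''(\psi)\in L^\infty(\Omega)$ composed with $L^2(\Omega)\hookrightarrow(H^1(\Omega))'$ is compact by Rellich's theorem, and $v\mapsto\overline{v}$ is of finite rank; hence $L$ is a compact perturbation of an index-zero Fredholm operator, so it is itself one and $N:=\ker L$ is finite dimensional. Let $\Pi$ be the $L^2$-orthogonal projection onto $N$. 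The heart of the argument is then the Lyapunov--Schmidt reduction: writing $\phi=\psi+(I-\Pi)w+\zeta$ with $\zeta\in N$ small, I would solve $(I-\Pi)\mathcal{M}(\psi+(I-\Pi)w+\zeta)=0$ for $(I-\Pi)w=g(\zeta)$ through the analytic implicit function theorem (the pertinent partial derivative at the origin being $(I-\Pi)L(I-\Pi)$, an isomorphism of $(I-\Pi)H^1(\Omega)$). The reduced function $\Gamma(\zeta):=\Upsilon(\psi+g(\zeta)+\zeta)$ is real-analytic on a ball of the finite dimensional space $N$, its gradient is comparable to $\Pi\,\mathcal{M}(\psi+g(\zeta)+\zeta)$, and \L ojasiewicz's classical inequality for real-analytic functions gives $|\nabla\Gamma(\zeta)|\geq c\,|\Gamma(\zeta)-\Gamma(0)|^{1-\theta}$ for some $\theta\in(0,\tfrac12]$ (any smaller value in $(0,\tfrac12)$ then works as well). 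Combining this with $\|(I-\Pi)\mathcal{M}(\phi)\|_{(H^1)'}+\|\Pi\,\mathcal{M}(\phi)\|\leq C\|\mathcal{M}(\phi)\|_{(H^1)'}$ and with $\Upsilon(\phi)-\Upsilon(\psi)=\Gamma(\zeta)-\Gamma(0)$ up to higher-order terms (controlled because $(I-\Pi)w-g(\zeta)$ enters quadratically) yields \eqref{LSa}.

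The point that I expect to require the most care is the analyticity of the Nemytskii operator $\phi\mapsto F'(\phi)$ in a framework compatible both with the $(H^1(\Omega))'$-norm on the left of \eqref{LSa} and with the $H^1(\Omega)$-neighbourhood on the right: since the exponent $r$ in \textbf{(F1)} may be as large as (just below) $6$, the map $\phi\mapsto F'(\phi)$ is only borderline well-defined from $H^1(\Omega)$ into $(H^1(\Omega))'$ and certainly not analytic there. I would get around this by using that $d\le 3$ gives $H^2(\Omega)\hookrightarrow C(\overline{\Omega})$ and that, by Lemma \ref{critical} and elliptic regularity (with a bootstrap), $\psi$ is smooth: one runs the entire scheme on the space $H^2_N(\Omega)$, where the Nemytskii operator of the real-analytic $F'$ (assumption \textbf{(F2)}) is genuinely real-analytic, thereby obtaining \eqref{LSa} first for $\phi$ in a small $H^2$-ball around $\psi$; then, for $\phi\in H^1(\Omega)$ with $\|\phi-\psi\|_{H^1(\Omega)}<\beta$, one argues by dichotomy. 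Either $\|\mathcal{M}(\phi)\|_{(H^1)'}$ is bounded below by a fixed positive constant, and then, after shrinking $\beta$ so that $|\Upsilon(\phi)-\Upsilon(\psi)|$ is small by continuity of $\Upsilon$ on $H^1(\Omega)$, the estimate \eqref{LSa} is trivial; or $\|\mathcal{M}(\phi)\|_{(H^1)'}$ is small, in which case writing $-\Delta\phi=\mathcal{M}(\phi)-F'(\phi)+m-\overline{\phi}$ and combining the uniform $H^1$-bound on $\phi$ (so that $F'(\phi)$ stays bounded in an appropriate Lebesgue space) with elliptic estimates forces $\phi$ into the $H^2$-ball where the first step applies. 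The nonlocal term $-(m-\overline{\phi})$ causes no trouble anywhere: it adds only a bounded finite-rank summand to $L$ and a smooth quadratic correction to $\Upsilon$.
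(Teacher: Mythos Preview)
The paper does not actually prove this lemma: it is simply quoted from \cite[Lemma~4.1]{WGZ07} (with \cite[Lemma~2.2]{Zhang} mentioned as a weaker variant), so there is no ``paper's own proof'' to compare against. Your proposal is the standard route by which such inequalities are established---and is, in outline, what the cited references do: identify $\Upsilon'$ with $\mathcal{M}$, verify that the linearisation $L=\mathcal{M}'(\psi)$ is self-adjoint Fredholm of index zero (Neumann Laplacian plus compact perturbations), perform a Lyapunov--Schmidt reduction onto $\ker L$, apply the classical \L ojasiewicz inequality to the finite-dimensional reduced function, and lift back. Your treatment of the Nemytskii operator is also the right one: because $r$ in \textbf{(F1)} may approach $6$, the map $\phi\mapsto F'(\phi)$ is not analytic from $H^1(\Omega)$ to $(H^1(\Omega))'$, so one first proves \eqref{LSa} on an $H^2_N(\Omega)$-neighbourhood (where $H^2\hookrightarrow C(\overline\Omega)$ and \textbf{(F2)} gives genuine analyticity) and then extends to the $H^1$-ball by the dichotomy/elliptic-bootstrap argument you describe. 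This is exactly the device used in \cite{WGZ07}.
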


Now we are in a position to prove the \L ojasiewicz-Simon type inequality \eqref{LSb} stated in Lemma \ref{LS2}, which plays a crucial role in the study of
long-time behavior of problem \eqref{p1}--\eqref{ini}.\medskip

\textbf{Proof of Lemma \ref{LS2}.}
 In Lemma \ref{LS1}, we take $m=m_\infty$ (see \eqref{m}) and $\psi=\phi_\infty$. Then it follows from \eqref{sta1}--\eqref{stamu1} and \eqref{m} that
 $\phi_\infty$ satisfies the reduced elliptic problem \eqref{sta3}. Hence, according to Lemma \ref{critical}, we see that it is a critical point of
 $\Upsilon(\phi)$ (cf. \eqref{E1} with $m=m_\infty$).
 As a consequence, Lemma \ref{LS1} applies with constants $\theta\in (0, \frac12)$, $\beta>0$ depending
 on  $\phi_\infty$, $m_\infty$ and $\Omega$.
 On the other hand, for any $\phi\in H^2_N(\Omega)$ we set
 $$\mu=-\Delta \phi+F'(\phi)$$
and then using integration by parts, we get
$$\int_\Omega \mu \dx=\int_\Omega F'(\phi) \dx.$$
From the \L ojasiewicz-Simon inequality \eqref{LSa} (applying to $\psi=\phi_\infty)$,
 Poincar\'e's inequality and \eqref{cons}, we deduce that
\begin{align}
& |\Upsilon(\phi) -\Upsilon(\phi_\infty) |^{1-\theta} \nonumber\\
&\quad  \leq
    \left\| - \Delta \phi + F'(\phi)- \left(m_\infty -
   |\Omega|^{-1}\int_\Omega \phi \dx\right) \right\|_{(H^1(\Omega))'}\nonumber\\
&\quad \leq \left\|- \Delta \phi + F'(\phi)-|\Omega|^{-1}\int_\Omega F'(\phi)\dx \right\|_{(H^1(\Omega))'}   \nonumber\\
&\qquad + \left\| |\Omega|^{-1}\int_\Omega F'(\phi)\dx-\left(m_\infty
   -|\Omega|^{-1}\int_\Omega \phi \dx\right)\right\|_{(H^1(\Omega))'}\nonumber\\
 &\quad
 = \left\|\mu- \overline{\mu}  \right\|_{(H^1(\Omega))'}
  + \left\| |\Omega|^{-1}\int_\Omega \mu \dx-|\Omega|^{-1}\int_\Omega \sigma \dx-m_u\right\|_{(H^1(\Omega))'}\nonumber\\
 & \quad \leq \left\|\mu- \overline{\mu}   \right\|_{(H^1(\Omega))'}
  + |\Omega|^{-1}\left|\int_\Omega (\mu - \sigma)\dx\right|+ |m_u|\nonumber\\
  & \quad \leq  \left\|\mu- \overline{\mu}  \right\|_{(H^1(\Omega))'}
  + |\Omega|^{-1} \left(\int_\Omega \frac{1}{P(\phi)} \dx\right)^\frac12\left(\int_\Omega P(\phi)(\mu-\sigma)^2 \dx\right)^\frac12\nonumber\\
  &\qquad +|m_u|.
  \label{LS2A}
   \end{align}
 By the Sobolev embedding $H^2(\Omega)\hookrightarrow C(\overline{\Omega})$ ($n=2,3$), the continuity as well as the strictly positivity of $P(s)$, then it holds
 $$\int_\Omega \frac{1}{P(\phi)} \dx \leq |\Omega|\left(\min_{x\in \overline{\Omega}} P(\phi(x))\right)^{-1}\leq C,$$
 where the constant $C>0$ depends on $\Omega$, $\|\phi\|_{H^2(\Omega)}$ and $P$.

On the other hand, on account of \eqref{E}, \eqref{cons}, \eqref{E1} and Poincar\'e's inequality, since $\theta\in (0,\frac12)$ and $\sigma\in H^1(\Omega)$,
 we infer that
\begin{align}
&|\mathcal{E}(\phi,
\sigma)-\Upsilon(\phi)|^{1-\theta}
&\quad\nonumber\\
&\quad = \left| \frac{1}{2}\|\sigma\|_{L^2(\Omega)}^2-\frac12|\Omega|( \overline{\sigma}+m_u)^2\right|^{1-\theta}\nonumber\\
&\quad \leq \left(\frac{1}{2}\right)^{1-\theta}\left(\int _{\Omega}(\sigma-\overline{\sigma})^2\dx +2|\Omega||\overline{\sigma}||m_u|
+|\Omega|m_u^2\right)^{1-\theta}\nonumber\\
&\quad \leq C\|\nabla
  \sigma\|_{L^2(\Omega)}^{2(1-\theta)}+C\left(|m_u|^{1-\theta}+|m_u|^{2(1-\theta)}\right)\nonumber\\
  &\quad
  \leq C\|\nabla \sigma\|_{L^2(\Omega)}+C|m_u|^\frac12.
  \label{mean}
\end{align}
Finally, since
$\Upsilon(\phi_\infty)=\mathcal{E}(\phi_\infty,\sigma_\infty)$ (recalling that $\sigma_\infty$ is a constant satisfying \eqref{cons}),
we deduce from inequalities \eqref{LS2A} and \eqref{mean} that %
\begin{align}
 & | \mathcal{E}(\phi, \sigma)
   -\mathcal{E}(\phi_\infty, \sigma_\infty)|^{1-\theta} \nonumber\\
  &\quad  \leq
   |\mathcal{E}(\phi, \sigma)-\Upsilon(\phi)|^{1-\theta}+ | \Upsilon(\phi)
   -\Upsilon(\phi_\infty)|^{1-\theta}\nonumber\\
  &\quad \leq
  \left\|\mu- \overline{\mu}  \right\|_{(H^1(\Omega))'}+ C\|\nabla \sigma\|_{L^2(\Omega)}
  + C\| \sqrt{P(\phi)}(\mu-\sigma)\|_{L^2(\Omega)}+C|m_u|^\frac12.\nonumber
\end{align}
The proof of Lemma \ref{LS2} is complete.

\section*{Acknowledgements}

This research has been performed in the framework of the project Fondazione Cariplo-Regione Lombardia MEGAsTAR
``Matema\-tica d'Eccellenza in biologia ed ingegneria come acceleratore
di una nuova strateGia per l'ATtRattivit\`a dell'ateneo pavese''.
C. Cavaterra and E. Rocca were partially supported by GNAMPA (Gruppo Nazionale per l'Analisi Matematica, la Probabilit\`a e le loro Applicazioni)
of INdAM (Istituto Nazionale di Alta Matematica). H. Wu was partially supported by NNSFC grant No. 11631011 and the Shanghai Center for Mathematical Sciences.
This research was also supported by the Italian Ministry of Education, University and Research (MIUR): Dipartimenti di Eccellenza Program (2018--2022) -
Dept. of Mathematics ``F. Casorati'', University of Pavia.

%%%%%%%%%%%%%%%%%%%%%%%%%%%%%%%%%%%%%%%%%%%%%%%%%%%%%%%%%%%%%%%%%%

\end{document}